\documentclass[11pt]{amsart}


\usepackage{amsmath,amssymb,amsfonts,graphics}
\topmargin=0cm \evensidemargin=0cm \oddsidemargin=0cm
\textwidth=16cm \textheight=22cm
\usepackage[all]{xy}

\usepackage{xcolor}

\usepackage{enumerate}


\newtheorem{thm}{Theorem}[section]
\newtheorem{cor}[thm]{Corollary}
\newtheorem{lem}[thm]{Lemma}
\newtheorem{prop}[thm]{Proposition}
\theoremstyle{definition}
\newtheorem{defn}[thm]{Definition}
\newtheorem{conj}[thm]{Conjecture}
\newtheorem{question}[thm]{Question}
\newtheorem{example}[thm]{Example}
\theoremstyle{remark}
\newtheorem{rem}[thm]{Remark}
\numberwithin{equation}{section}

\newcommand{\R}{\mathbb{R}}
\newcommand{\N}{\mathbb{N}}

\newcommand{\C}{\mathbb{C}}

\newcommand{\fA}{\mathcal{A}}
\newcommand{\fB}{\mathcal{B}}
\newcommand{\fC}{\mathcal{C}}

\newcommand{\fL}{\mathcal{L}}

\newcommand{\om}{\omega}

\newcommand{\sg} {\sigma}

\newcommand{\pf}{\textnormal{PF}}

\newcommand{\la}{\langle}
\newcommand{\ra}{\rangle}

\newcommand{\lla}{\left\langle}
\newcommand{\rra}{\right\rangle}
\newcommand{\CLp}{C^*_{L^p}(G)}
\newcommand{\bb}{\mathbb}
\newcommand{\CPF}{C^*(\pf_{p}^*(G))}
\newcommand{\SL}{\text{SL}(2,\R)}
\newcommand{\mc}{\mathcal}

\renewcommand{\subset}{\subseteq}

\begin{document}

\title[]
{Exotic C$^*$-algebras of geometric groups}

\author{Ebrahim Samei}
\address{Department of Mathematics and Statistics, University of Saskatchewan, Saskatoon, Saskatchewan, S7N 5E6, Canada}
\email{samei@math.usask.ca}

\author{Matthew Wiersma}
\address{Department of Mathematics and Statistics, University of Winnipeg, 515 Portage Avenue,
Winnipeg, Manitoba, Canada  R3B 2E9}
\email{m.wiersma@uwinnipeg.ca}

\maketitle


\begin{abstract}
	We consider a new class of potentially exotic group C*-algebras $\CPF$ for a locally compact group $G$, and its connection with the class of potentially exotic group C*-algebras $\CLp$ introduced by Brown and Guentner. Surprisingly, these two classes of C*-algebras are intimately related. By exploiting this connection, we show
	$\CLp=\CPF$ for $p\in (2,\infty)$, and
	the C*-algebras $\CLp$ are pairwise distinct for $p\in (2,\infty)$
	when $G$ belongs to a large class of nonamenable groups possessing the Haagerup property and either the rapid decay property or Kunze-Stein phenomenon by characterizing the positive definite functions that extend to positive linear functionals of $\CLp$ and $\CPF$. This greatly generalizes earlier results of Okayasu (see \cite{Okay}) and the second author (see \cite{W-Fourier}) on the pairwise distinctness of $\CLp$ for $2<p<\infty$ when $G$ is either a noncommutative free group or the group $\SL$, respectively.
	
	As a byproduct of our techniques, we present two applications to the theory of unitary representations of a locally compact group $G$. Firstly, we give a short proof of the well-known Cowling-Haagerup-Howe Theorem, which presents sufficient condition implying the weak containment of a cyclic unitary representation of $G$ in the left regular representation of $G$ (see \cite{CHH}). Also we give a near solution to a 1978 conjecture of Cowling stated in \cite{Cal}. This conjecture of Cowling states if $G$ is a Kunze-Stein group and $\pi$ is a unitary representation of $G$ with cyclic vector $\xi$ such that the map
	$$G\ni s\mapsto \lla \pi(s)\xi,\xi\rra$$
	belongs to $L^p(G)$ for some $2< p <\infty$, then $A_\pi\subseteq L^p(G)$. We show $B_\pi\subseteq L^{p+\epsilon}(G)$ for every $\epsilon>0$ (recall $A_\pi\subseteq B_\pi$).
\end{abstract}

\section{Introduction}

There are two natural constructions of C*-algebras associated to a locally compact group $G$: the full group C*-algebra $C^*(G)$ and the reduced group C*-algebra $C^*_r(G)$. These two constructions coincide exactly when the group is amenable. In particular, there may be C*-algebras ``between'' $C^*(G)$ and $C^*_r(G)$ when $G$ is nonamenable. Such C*-algebras are known as exotic group C*-algebras. More specifically, an \emph{exotic group C*-algebra} of a locally compact group $G$ is a C*-completion $A$ of $L^1(G)$ so that the identity map on $L^1(G)$ extends to proper quotients
$$ C^*(G)\to A\to C^*_r(G).$$

If $\pi: G\to B(H_\pi)$ is a unitary representation of a locally compact group $G$ and $\xi,\eta\in H_\pi$, we let $\pi_{\xi,\eta}:G\to\C$ be the matrix coefficient function defined by
$$ \pi_{\xi,\eta}(s):=\lla \pi(s)\xi,\eta\rra.$$
One of the most interesting constructions of potentially exotic group C*-algebras is associated to unitary representations that have ``many'' matrix coefficient functions belonging to $L^p(G)$ for some $1\leq p\leq \infty$. A unitary representation $\pi:G\to B(H_\pi)$ is an \emph{$L^p$-representation} for $1\leq p\leq \infty$ if $H_\pi$ admits a dense subspace $H_0$ such that
$$\pi_{\xi,\xi}\in L^p(G)$$
for every $\xi\in H_0$. For $1\leq p\leq \infty$, the \emph{$L^p$-C*-algebra} $\CLp$ is the completion of $L^1(G)$ with respect to the C*-norm $\|\cdot\|_{\CLp}$ defined by
$$\|f\|_{\CLp}=\sup\big\{\|\pi(f)\|: \pi\text{ is an $L^p$-representation of }G\big\}.$$
The $L^p$-C*-algebras for locally compact groups were first introduced by Brown and Guentner in the case of discrete groups (see \cite{BG}) and have since been studied by many different authors (see \cite{BR,BE,BEW1,BEW2,Eck,KLQ2,KLQ4,KLQ3,KLQ1,Okay,RW,W-tensor,W-const,W-Fourier}).
We list a few properties these C*-algebras possess.
\begin{itemize}
	\item If $1\leq p'\leq p\leq \infty$, then the identity map on $L^1(G)$ extends to quotients
	$$ C^*(G)\to \CLp\to C^*_{L^{p'}}(G)\to C^*_r(G);$$
	\item $\CLp=C^*_r(G)$ for each $1\leq p\leq 2$ (see \cite{BG});
	\item $C^*_{L^\infty}(G)=C^*(G)$;
	\item If $\CLp=C^*(G)$ for some $1\leq p<\infty$, then $G$ is amenable (see \cite{BG}).
\end{itemize}

Prior to our present work, the class of nonamenable locally compact groups $G$ where the C*-algebras $\CLp$ were well understood was quite small. The best known results regarding the distinctness of the C*-algebras $\CLp$ for $2\leq p\leq \infty$ were the following.
\begin{thm}[Okayasu {\cite{Okay}}]\label{thm:Okayasu}
	Let $2\leq d<\infty$ and $2\leq p'<p\leq \infty$. The canonical quotient map $C^*_{\ell^p}(\mathbb F_d)\to C^*_{\ell^{p'}}(\mathbb F_d)$ is not injective.
\end{thm}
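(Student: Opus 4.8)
The plan is to produce a single positive definite function on $\mathbb{F}_d$ that extends to a positive linear functional on $C^*_{\ell^p}(\mathbb{F}_d)$ but not on $C^*_{\ell^{p'}}(\mathbb{F}_d)$; such a function shows the canonical quotient $C^*_{\ell^p}(\mathbb{F}_d)\to C^*_{\ell^{p'}}(\mathbb{F}_d)$ has nonzero kernel, since a positive functional on $C^*_{\ell^p}(\mathbb{F}_d)$ that vanished on that kernel would descend to $C^*_{\ell^{p'}}(\mathbb{F}_d)$. The natural candidates are Haagerup's radial functions $\varphi_r(x)=r^{|x|}$ for $0<r<1$, where $|\cdot|$ is the word length for the standard generators; by Haagerup's theorem each $\varphi_r$ is positive definite on $\mathbb{F}_d$. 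Writing $q_0:=2d-1$ (so there are $2d\,q_0^{n-1}$ elements of length $n$), one checks that $\varphi_r\in\ell^s(\mathbb{F}_d)$ exactly when $r<q_0^{-1/s}$, and, crucially for the lower bounds below, $\sum_{|x|=n}\varphi_r(x)=2d\,q_0^{n-1}r^n\asymp (q_0 r)^n$.

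For the positive direction I would show that when $r<q_0^{-1/p}$ the GNS representation $\pi_r$ of $\varphi_r$, with cyclic unit vector $\xi_r$ and $\lla\pi_r(x)\xi_r,\xi_r\rra=\varphi_r(x)$, is an $L^p$-representation of $\mathbb{F}_d$. Take $H_0:=\spn\{\pi_r(g)\xi_r:g\in\mathbb{F}_d\}$, which is dense by cyclicity. For $\xi=\sum_i a_i\pi_r(g_i)\xi_r\in H_0$ the matrix coefficient is $s\mapsto\lla\pi_r(s)\xi,\xi\rra=\sum_{i,j}a_i\overline{a_j}\,\varphi_r(g_j^{-1}sg_i)$, a finite linear combination of two‑sided translates of $\varphi_r$; since translation is a bijection of $\mathbb{F}_d$ it preserves $\ell^p$, so this matrix coefficient lies in $\ell^p(\mathbb{F}_d)$ (and for $p=\infty$ there is nothing to prove, as $C^*_{\ell^\infty}(\mathbb{F}_d)=C^*(\mathbb{F}_d)$). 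Hence $\varphi_r$ is a matrix coefficient of an $L^p$-representation, so it extends to a positive linear functional on $C^*_{\ell^p}(\mathbb{F}_d)$.

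The negative direction is the heart of the argument and the step I expect to be the main obstacle: establishing the estimate
$$\|\chi_n\|_{C^*_{\ell^{p'}}(\mathbb{F}_d)}\ \le\ C_{p'}\,(n+1)\,q_0^{\,n(1-1/p')},\qquad \chi_n:=\sum_{|x|=n}\delta_x .$$
The exponent $1-1/p'$ interpolates the two endpoints: at $p'=2$ it is Haagerup's inequality (the rapid decay property of $\mathbb{F}_d$), $\|\chi_n\|_{C^*_r(\mathbb{F}_d)}\le (n+1)\|\chi_n\|_2\asymp (n+1)q_0^{n/2}$, and at $p'=\infty$ it is $\|\chi_n\|_{C^*(\mathbb{F}_d)}=\|\chi_n\|_1\asymp q_0^n$. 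For intermediate $p'$ I would obtain it either by interpolating the convolution operator norm of $\chi_n$ on $\ell^{p'}(\mathbb{F}_d)$ between $\ell^2$ (Haagerup) and $\ell^\infty$ and passing to $C^*_{\ell^{p'}}(\mathbb{F}_d)$ via its identification with a $C^*$-algebra of $p'$‑convolution operators, or by a tensor‑power argument of Cowling--Haagerup--Howe type: if $\pi$ is an $\ell^{p'}$-representation then on a total set of vectors the $k$-fold products of matrix coefficients lie in $\ell^{p'/k}\subseteq\ell^2$ once $k\ge p'/2$, which pushes $\pi^{\otimes k}$ into the reduced C*-algebra and lets the Haagerup bound be transferred back to $\pi$ with the exponent degrading from $1/2$ to $1-1/p'$. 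Granting the estimate, if $\varphi_r$ extended to a positive linear functional $\psi$ on $C^*_{\ell^{p'}}(\mathbb{F}_d)$ normalized by $\psi(\delta_e)=1$, then $(q_0 r)^n\asymp\psi(\chi_n)\le\|\chi_n\|_{C^*_{\ell^{p'}}(\mathbb{F}_d)}\le C_{p'}(n+1)q_0^{\,n(1-1/p')}$ for every $n$, forcing $q_0 r\le q_0^{1-1/p'}$, i.e. $r\le q_0^{-1/p'}$.

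To conclude, since $p'<p$ and $q_0=2d-1>1$ the interval $(q_0^{-1/p'},q_0^{-1/p})$ is nonempty; pick $r$ in it. The second paragraph then gives that $\varphi_r$ extends to $C^*_{\ell^p}(\mathbb{F}_d)$, while the third gives that it does not extend to $C^*_{\ell^{p'}}(\mathbb{F}_d)$, so the canonical quotient map cannot be injective. The only genuinely hard input is the upper bound on $\|\chi_n\|_{C^*_{\ell^{p'}}(\mathbb{F}_d)}$ — equivalently, a characterization of which positive definite functions extend to $C^*_{\ell^{p'}}(\mathbb{F}_d)$, which is exactly the type of result the rest of the paper is designed to provide; everything else is bookkeeping with Haagerup's functions and the GNS construction.
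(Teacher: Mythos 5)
Your proposal is correct in strategy and is essentially Okayasu's original argument, which is also what this paper's general machinery reduces to when specialized to $\mathbb F_d$: the positive direction via Haagerup's functions $\varphi_r=\phi_{t,|\cdot|}$ (with $r=e^{-t}$) and the GNS construction matches the paper's use of the integrable Haagerup property, and your key estimate $\|\chi_n\|_{C^*_{\ell^{p'}}(\mathbb F_d)}\lesssim (n+1)\,q_0^{\,n(1-1/p')}$ is precisely what you get by combining the paper's Lemma \ref{lem:PF-to-CLp} with the bilinear interpolation of Theorem \ref{T:RD-weighted Lq embeds in convolution operators} (interpolating Haagerup's inequality on $\ell^2$ against the trivial $\ell^1$ bound), after which Theorem \ref{thm:RD} and Corollary \ref{cor:RD-distinct} run exactly your threshold computation with $r\in(q_0^{-1/p'},q_0^{-1/p})$. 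The one step you defer is indeed the crux, and of your two suggested routes only the first is the right one, with a caveat: $C^*_{\ell^{p'}}(\mathbb F_d)$ is \emph{not} identifiable with a C*-algebra of $p'$-convolution operators; what is true (and nontrivial) is the one-sided inequality $\|f\|_{C^*_{\ell^{p'}}(\mathbb F_d)}\leq\max\{\|\lambda_{p'}(f)\|,\|\lambda_{q'}(f)\|\}=\|f\|_{\pf_{p'}^*(\mathbb F_d)}$, which is Lemma \ref{lem:PF-to-CLp} and rests on the Cowling--Haagerup--Howe/Okayasu estimate $\|f\|_{C^*_{\ell^{p'}}}\leq\liminf_n\|(f^**f)^{(*n)}\|_{\ell^{q'}}^{1/2n}$ (Lemma \ref{lem:Okayasu}); that inequality suffices for your argument since the sphere functions $\chi_n$ are self-adjoint and the interpolated operator-norm bound applies to them. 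Your second route (tensor powers) does not work as sketched, because $\pi^{\otimes k}(f)\neq\pi(f)^{\otimes k}$, so weak containment of $\pi^{\otimes k}$ in the regular representation does not directly transfer a norm bound back to $\pi(\chi_n)$; this is exactly the difficulty that the $\liminf$ formulation of Lemma \ref{lem:CHH} is designed to circumvent. With the first route made precise, everything else in your write-up (the $\ell^s$-membership threshold for $\varphi_r$, the translation-invariance argument for the GNS matrix coefficients, and the duality argument showing a nonzero kernel) is sound.
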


\begin{thm}[Wiersma {\cite{W-Fourier}}, Repka {\cite{repka}}]\label{thm:W}
	Let $2\leq p'<p\leq \infty$. The canonical quotient map $C^*_{L^p}(SL(2,\R))\to C^*_{L^{p'}}(SL(2,\R))$ is not injective.
\end{thm}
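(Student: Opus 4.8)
The plan is to separate these quotients using the complementary series of $G:=\SL$. The case $p=\infty$ is immediate from the listed properties: $C^*_{L^\infty}(G)=C^*(G)$, so injectivity of $C^*(G)\to C^*_{L^{p'}}(G)$ would give $C^*_{L^{p'}}(G)=C^*(G)$ and hence amenability of $G$, which is false. So assume $2\le p'<p<\infty$. Recall the Cartan decomposition $G=KA^+K$ with $a_t=\mathrm{diag}(e^{t/2},e^{-t/2})$, Haar measure comparable to $\sinh t\,dt$ along $A^+$, and the Harish-Chandra function $\Xi$ with $\Xi(a_t)$ comparable to $(1+t)e^{-t/2}$. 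The complementary series is a one-parameter family of irreducible unitary representations of $G$, each carrying a $K$-fixed unit vector, interpolating between the trivial representation and the tempered edge; parametrising it by the critical exponent, for every $\alpha\in(2,\infty)$ there is such a representation $\pi_\alpha$ whose spherical coefficient $\varphi_\alpha:=\langle\pi_\alpha(\cdot)\xi_\alpha,\xi_\alpha\rangle$ satisfies: $\varphi_\alpha(a_t)$ is comparable to $e^{-t/\alpha}$, so that $\varphi_\alpha\in L^q(G)$ if and only if $q>\alpha$.

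Fix a parameter $\alpha$ with $p'<\alpha<p$, possible since $p'<p$. First I would check that $\pi_\alpha$ is an $L^p$-representation. Since $\alpha<p$ we have $\varphi_\alpha\in L^p(G)$, and for $f\in C_c(G)$ the function $g\mapsto\langle\pi_\alpha(g)\pi_\alpha(f)\xi_\alpha,\pi_\alpha(f)\xi_\alpha\rangle$ is a two-sided convolution of $\varphi_\alpha$ by elements of $C_c(G)$, hence lies in $L^1(G)*L^p(G)*L^1(G)\subseteq L^p(G)$ by Young's inequality; as $\{\pi_\alpha(f)\xi_\alpha:f\in C_c(G)\}$ is dense ($\pi_\alpha$ being irreducible, hence cyclic on $\xi_\alpha$), the representation $\pi_\alpha$ factors through $C^*_{L^p}(G)$ and is a nonzero representation of it. It therefore suffices to show that $\pi_\alpha$ does \emph{not} factor through $C^*_{L^{p'}}(G)$. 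Since $C^*_{L^{p'}}(G)$ is the completion of $L^1(G)$ for the supremum of the norms arising from $L^{p'}$-representations, and a direct sum of $L^{p'}$-representations is again one, $\pi_\alpha$ factors through $C^*_{L^{p'}}(G)$ precisely when it is weakly contained in some $L^{p'}$-representation $\sigma$ of $G$; I would rule this out.

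Suppose $\pi_\alpha\prec\sigma$ with $\sigma$ an $L^{p'}$-representation. By Fell's criterion $\varphi_\alpha$ is a weak* limit in $L^\infty(G)$ of finite sums of diagonal coefficients of $\sigma$; writing each such sum as a single diagonal coefficient of a direct sum of copies of $\sigma$ (still an $L^{p'}$-representation) and normalising at the identity, $\varphi_\alpha$ is a weak* limit of coefficients $\langle\sigma_j(\cdot)\eta_j,\eta_j\rangle$ with $\|\eta_j\|^2\le 2$ and each $\sigma_j$ an $L^{p'}$-representation. The decisive ingredient is the $L^p$-graded form of the Cowling--Haagerup--Howe theorem \cite{CHH}: there is a constant $C=C(p')$ such that $|\langle\tau(g)\eta,\eta\rangle|\le C\|\eta\|^2\,\Xi(g)^{2/p'}$ for every $g\in G$, every $L^{p'}$-representation $\tau$ of $G$, and every vector $\eta$. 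Granting it, each approximating coefficient is dominated by $2C\,\Xi^{2/p'}$, so passing to the (continuous) limit yields $|\varphi_\alpha|\le 2C\,\Xi^{2/p'}$ pointwise on $G$. As $\Xi(a_t)^{2/p'}$ is comparable to $(1+t)^{2/p'}e^{-t/p'}$, we get $\Xi^{2/p'}\in L^{p'+\epsilon}(G)$ for all $\epsilon>0$, hence $\varphi_\alpha\in L^{p'+\epsilon}(G)$ for all $\epsilon>0$; taking $\epsilon=(\alpha-p')/2>0$ forces $\varphi_\alpha\in L^{(\alpha+p')/2}(G)$, which contradicts $\varphi_\alpha\notin L^q(G)$ for $q\le\alpha$ since $(\alpha+p')/2<\alpha$. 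Thus no such $\sigma$ exists, $\pi_\alpha$ does not factor through $C^*_{L^{p'}}(G)$, and the canonical quotient $C^*_{L^p}(G)\to C^*_{L^{p'}}(G)$ is not injective.

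The main obstacle is the $L^p$-graded Cowling--Haagerup--Howe estimate invoked above; everything else is either classical structure theory for $\SL$ or a soft Young's-inequality and weak-containment argument. I would derive that estimate from the Kunze--Stein phenomenon for $\SL$ by the tensor-power method of \cite{CHH}: if $\langle\tau(\cdot)\eta,\eta\rangle\in L^{p'}(G)$ then for an integer $k$ with $2k\ge p'$ the positive-definite function $\langle\tau(\cdot)\eta,\eta\rangle^k$ lies in $L^{p'}(G)\cap L^\infty(G)\subseteq L^2(G)$, so the cyclic piece of $\tau^{\otimes k}$ it generates is weakly contained in the regular representation, and a Kunze--Stein convolution argument then bounds the coefficients of $\tau$ by a corresponding root of $\Xi$; the delicate point, for which the Kunze--Stein inequality is exactly suited, is to make this bound hold uniformly for \emph{all} unit vectors, with a constant depending only on $p'$.
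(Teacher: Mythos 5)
Your overall strategy --- exhibit a complementary series representation $\pi_\alpha$ with $p'<\alpha<p$ that is an $L^p$-representation but is not weakly contained in any $L^{p'}$-representation --- is sound and is essentially the route taken in \cite{W-Fourier} and, in integrated form, in this paper. The reduction via Fell's criterion, the Young's-inequality verification that $\pi_\alpha$ is an $L^p$-representation, and the final arithmetic with the exponent $(\alpha+p')/2<\alpha$ are all fine. The gap is your ``decisive ingredient'': the estimate $|\langle\tau(g)\eta,\eta\rangle|\le C(p')\|\eta\|^2\,\Xi(g)^{2/p'}$ for \emph{every} vector $\eta$ of \emph{every} $L^{p'}$-representation $\tau$ is false, already for $p'=2$ and $\tau=\lambda$. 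Take $\eta$ to be the normalized indicator of the union of two small balls, one at $e$ and one at a far-away point $g_0$; then $\langle\lambda(g_0)\eta,\eta\rangle$ equals roughly $\tfrac12$ while $\Xi(g_0)^{1}$ is as small as you like, so no constant depending only on $p'$ can work. This is not a removable delicacy: the Cowling--Haagerup--Howe pointwise bound $|\langle\pi(g)\xi,\eta\rangle|\le\Xi(g)^{1/k}\big(\dim\langle K\xi\rangle\dim\langle K\eta\rangle\big)^{1/2}\|\xi\|\|\eta\\|$ is intrinsically a statement about $K$-finite vectors and carries dimension constants that blow up for general vectors. A second problem is that the approximating vectors $\eta_j$ produced by Fell's criterion need not lie in the dense subspace on which the coefficients of $\sigma_j$ belong to $L^{p'}$, so you cannot even launch the tensor-power argument for them (general diagonal coefficients of an $L^{p'}$-representation need not lie in any $L^q$ with $q<\infty$).

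What you actually need is weaker and true: every element of $B_{L^{p'}}(G)=C^*_{L^{p'}}(G)^*$ lies in $L^{p'+\epsilon}(G)$ for every $\epsilon>0$; applied to $\varphi_\alpha\in B_\sigma\subseteq B_{L^{p'}}(G)$ this yields the same contradiction without any pointwise decay. The paper obtains this containment by a duality argument in place of a pointwise one: the Kunze--Stein phenomenon for $\SL$, after bilinear interpolation, gives a continuous dense embedding $L^{q''}(G)\to\pf^*_{p'}(G)$ for every $q''$ strictly less than the conjugate exponent of $p'$ (Proposition \ref{P:Kunze-Stein Phenomenon-Lp} and Corollary \ref{T:Kunze-Stein-Lq embeds into convolution operators}), while Okayasu's estimate combined with Lemma \ref{lem:PF-to-CLp} gives $\|f\|_{C^*_{L^{p'}}(G)}\le\|f\|_{\pf^*_{p'}(G)}$; dualizing yields $B_{L^{p'}}(G)\subseteq\bigcap_{\epsilon>0}L^{p'+\epsilon}(G)$ (Theorem \ref{thm:KS}). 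If you replace your pointwise lemma by this containment, the rest of your argument goes through verbatim.
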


\begin{thm}[Buss-Echterhoff-Willett {\cite{BEW2}}, Cowling \cite{Cow}]\label{thm:BEW}
	Let $G$ be a simple Lie group with finite centre and rank at least $2$. There exists $p_0\in [2,\infty)$ so that $\CLp=C^*_{L^{p'}}(G)$ canonically for all $p,p'\in [p_0,\infty)$.
\end{thm}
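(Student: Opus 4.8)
The plan is to derive the theorem from two classical facts about higher-rank simple Lie groups: the Howe--Moore vanishing theorem, and M.\ Cowling's uniform estimate on the decay of matrix coefficients. Cowling's result says that, precisely because the real rank of $G$ is at least $2$, there is a single exponent $p_0=p_0(G)\in[2,\infty)$ such that every irreducible unitary representation $\pi\neq 1_G$ of $G$ has all of its $K$-finite matrix coefficients (for $K$ a maximal compact subgroup) bounded by a constant times $\Xi^{2/p_0}$, where $\Xi$ is the Harish-Chandra function; in particular these coefficients lie in $L^{p_0+\epsilon}(G)$ for every $\epsilon>0$, with the constants uniform over bounded sets of $K$-types. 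Decomposing an arbitrary representation into irreducibles ($G$ is type I) and using that the $K$-finite vectors are dense in any unitary representation of $G$, one gets: every unitary representation of $G$ with no nonzero $G$-invariant vector is an $L^q$-representation for \emph{every} $q\in(p_0,\infty)$ --- indeed its $K$-finite matrix coefficients are bounded and lie in $\bigcap_{\epsilon>0}L^{p_0+\epsilon}(G)\subseteq L^q(G)$ for all $q>p_0$. I will prove the theorem with $p_0$ replaced by $p_0+1$, so as to work inside a closed subinterval of $(p_0,\infty)$.

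Next I would record the complementary elementary fact: for \emph{any} $q<\infty$, no $L^q$-representation $\pi$ of $G$ has a nonzero $G$-invariant vector. If $\xi\neq 0$ were invariant, then since the dense subspace $H_0$ witnessing the $L^q$-condition projects densely onto the invariant subspace, there is $\eta\in H_0$ with nonzero invariant component $\eta_0$; writing $\eta=\eta_0+\eta_1$ with $\eta_1$ orthogonal to the invariant vectors, invariance of $\eta_0$ gives $\pi_{\eta,\eta}=\|\eta_0\|^2+\pi_{\eta_1,\eta_1}$, and since $\pi$ restricted to the orthocomplement of the invariant vectors has no invariant vectors, Howe--Moore forces $\pi_{\eta_1,\eta_1}(g)\to 0$ as $g\to\infty$. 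Hence $|\pi_{\eta,\eta}(g)|\to\|\eta_0\|^2>0$, which is impossible for a function in $L^q(G)$ since $G$ is noncompact and unimodular.

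Putting the two facts together proves the theorem. Fix $q,q'\in(p_0,\infty)$. If $\pi$ is an $L^q$-representation, then ($q<\infty$) it has no invariant vectors, hence by Cowling's estimate it is an $L^{q'}$-representation; by symmetry the class of $L^q$-representations does not depend on $q\in(p_0,\infty)$. Therefore $\|\cdot\|_{C^*_{L^q}(G)}=\|\cdot\|_{C^*_{L^{q'}}(G)}$ on $L^1(G)$, so the canonical quotient maps among the $C^*_{L^q}(G)$ for $q\in(p_0,\infty)$ are isomorphisms. (That these maps go in the expected direction --- i.e.\ that every $L^{q_0}$-representation is an $L^q$-representation for $q\geq q_0$ --- is immediate and holds for all groups, since diagonal matrix coefficients are bounded.)

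The one step that is not routine is the invocation of Cowling's uniform bound, and this is exactly where ``rank at least $2$'' enters: in real rank $1$ the complementary series degenerates toward the trivial representation, its matrix coefficients decaying arbitrarily slowly, so no uniform $p_0$ can exist --- consistently with Theorems~\ref{thm:Okayasu} and~\ref{thm:W}, where infinitely many of the algebras $\CLp$ are shown to be pairwise distinct. Beyond quoting that bound, the only care needed is the reduction from irreducible representations to general invariant-vector-free ones, handled by the central decomposition together with the density of $K$-finite vectors and the uniformity of Cowling's constants; no measurability obstruction arises.
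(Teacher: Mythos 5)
Your argument is correct; note that the paper does not prove Theorem \ref{thm:BEW} at all but merely quotes it from \cite{BEW2}, and your route --- Cowling's uniform $\Xi^{2/p_0}$ bound on $K$-finite matrix coefficients of higher-rank representations without invariant vectors, combined with Howe--Moore to rule out invariant vectors in $L^q$-representations for $q<\infty$ --- is essentially the argument underlying the cited result. The only points needing the care you already supply are the passage from nontrivial irreducibles to general invariant-vector-free representations (direct integral decomposition plus uniformity of Cowling's constants over bounded sets of $K$-types) and the replacement of $p_0$ by any strictly larger finite exponent so that the conclusion holds on a closed interval $[p_0+1,\infty)$.
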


Okayasu and the second author prove stronger results than stated in Theorem \ref{thm:Okayasu} and Theorem \ref{thm:W}, respectively. Indeed, Okayasu characterizes the positive definite functions on $\bb F_d$ that extend to positive linear functions on $C^*_{\ell^p}(\bb F_d)$ and the second author characterizes the irreducible representations of $\SL$ that extend to a $*$-representation of $C^*_{L^p}(\SL)$. Theorem \ref{thm:Okayasu} and Theorem \ref{thm:W} are easily deduced from these characterizations. Theorem \ref{thm:Okayasu} can be improved to stating if $G$ is a discrete group containing a non-commutative free group, then the quotient map $C^*_{\ell^p}(G)\to C^*_{\ell^{p'}}(G)$ is not injective for $2\leq p'< p\leq \infty$ (see \cite{W-const}), but there is no known characterization of positive linear functionals on $C^*_{\ell^p}(G)$ in this more general scenario. Theorem \ref{thm:BEW} shows Theorem \ref{thm:Okayasu} does not extend to the class of locally compact groups containing a closed noncommutative free subgroup.

The proof of Theorem \ref{thm:Okayasu} is deeply connected with the fact that $\bb F_d$ possesses both the Haagerup property and the rapid decay property. Rather than being able to appeal to these properties directly, Okayasu had to retrace through and appropriately adapt the arguments from Haagerup's famous paper showing $\bb F_d$ possesses these two properties (see \cite{Haag}). As such, Okayasu's techniques do not directly generalize to a much broader class of groups. In a similar vein, the proof of Theorem \ref{thm:W} is deeply connected to the fact that $\SL$ has both the Haagerup property and Kunze-Stein property. However, the second author needed to use results relying on tensor product formulas for irreducible representations of $\SL$ due to Repka (see \cite{repka}) to prove Theorem \ref{thm:W} rather than appealing to these properties directly. Since there are few groups whose representation theory is as well understood as $\SL$, these techniques cannot apply much more broadly.


This paper considers the universal C*-algebras for a class of $*$-semisimple Banach $*$-algebras $\pf_{p}^*(G)$ ($1\leq p\leq \infty$) associated to convolution operators. These Banach $*$-algebras have recently been studied in connection to the Baum-Connes conjecture (see \cite{KY,LY}) and in the present authors' recent affirmative solution to the long standing open question that every Hermitian locally compact group is amenable (see \cite{SW}). At first glance over the definitions, most would suspect there to be no connection between the C*-algebras $C^*(\pf_{p}^*(G))$ and $\CLp$ for a locally compact group $G$. In contrast, we show these two C*-algebras are intimately related and coincide in many interesting cases for $2\leq p\leq \infty$. It is not known whether $C^*(\pf_{p}^*(G))$ and $\CLp$ coincide in general for $2\leq p\leq \infty$. If these two constructions coincide, this equality would provide a powerful tool for better understanding $C^*_{L^p}(G)$ since working with the definition of $C^*(\pf_{p}^*(G))$ offers many technical advantages over working with the definition of $C^*_{L^p}(G)$. If the two constructions do not coincide, then we have a new and extremely interesting class of exotic group C*-algebras.

By appealing to the intimate relationship between $\CLp$ and $\CPF$, we circumvent the issues with the proofs of Theorem \ref{thm:Okayasu} and Theorem \ref{thm:W} preventing generalizations to larger classes of nonamenable locally compact groups and show that if $G$ is a nonamenable locally compact group with either the rapid decay property or Kunze-Stein property, and $G$ satisfies a certain ``nice version'' of the Haagerup property, then $\CLp=\CPF$ for $2\leq p\leq \infty$ and the canonical quotient
$$ \CLp\to C^*_{L^{p'}}(G)$$
is not injective for $2\leq p'<p\leq \infty$ by characterizing the positive definite functions that extend to positive linear functionals on $\CLp$ and $\CPF$. In particular, we greatly generalize Theorem \ref{thm:Okayasu} and Theorem \ref{thm:W}. This ``nice version'' of the Haagerup property does not appear to be much more difficult to obtain than the usual Haagerup property in practice.

The techniques introduced in this paper produce results beyond the realm of exotic group C*-algebras. Indeed, our techniques give a short proof of the Cowling-Haagerup-Howe Theorem about the weak containment of a cyclic $L^{2+}$-representation in the left regular representation of a locally compact group (see Theorem \ref{thm:CHH}). On the other hand, we also provide a near solution to the following conjecture from Cowling's famous 1978 paper on the Kunze-Stein Phenomena. (See section \ref{subsec:K-S} for the definition of a Kunze-Stein group and section \ref{subsec:F-S} for the definition of $A_\pi$).
\begin{conj}[Cowling {\cite{Cow}}]\label{conj:Cow}
	Let $G$ be a Kunze-Stein group. Suppose $\pi: G\to B(H_\pi)$ is a unitary representation of $G$ admitting a cyclic vector $\xi\in H_\pi$ so that
	$$ \pi_{\xi,\xi}\in L^p(G)$$
	for some $2<p<\infty$. Then $A_\pi\subset L^{p}(G)$.
\end{conj}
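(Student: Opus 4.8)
The plan is to reduce the assertion $A_\pi\subseteq L^p(G)$ to a single \emph{uniform coefficient estimate} for $\pi$ and to extract that estimate from the identification $\CLp=\CPF$ together with the Kunze--Stein phenomenon. Write $\phi:=\pi_{\xi,\xi}\in L^p(G)$ and let $p'$ be the conjugate exponent. Recall that $A_\pi$ is isometrically the space of normal functionals on the von Neumann algebra $\pi(G)''$; in particular every $u\in A_\pi$ is realised as a single coefficient of the amplification $\pi^{\oplus\infty}$, say $u=\sum_n\pi_{\eta_n,\zeta_n}$ with $\sum_n\|\eta_n\|^2,\ \sum_n\|\zeta_n\|^2<\infty$ and $\big(\sum_n\|\eta_n\|^2\big)^{1/2}\big(\sum_n\|\zeta_n\|^2\big)^{1/2}=\|u\|_{A_\pi}$. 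Consequently, if I can produce a constant $C=C(\|\phi\|_p)$ with
$$ \|\pi_{\eta,\zeta}\|_{L^p(G)}\le C\,\|\eta\|\,\|\zeta\|\qquad(\eta,\zeta\in H_\pi),$$
then subadditivity of $\|\cdot\|_p$ and the Cauchy--Schwarz inequality immediately upgrade this to $\|u\|_{L^p(G)}\le C\,\|u\|_{A_\pi}$ for all $u\in A_\pi$. Thus $A_\pi$ embeds continuously into $L^p(G)$, which is the desired conclusion.

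To establish the displayed estimate, first use the $B(G)$-continuity $\|\pi_{\eta,\zeta}\|_{B(G)}\le\|\eta\|\|\zeta\|$ and the cyclicity of $\xi$ to restrict to $\eta=\pi(f)\xi$ and $\zeta=\pi(g)\xi$ with $f,g\in C_c(G)$. For $h\in C_c(G)$ one computes, using that $\phi$ is the diagonal coefficient,
$$ \int_G\pi_{\eta,\zeta}(s)\,h(s)\,ds=\lla\pi(g^{*}*h*f)\xi,\xi\rra=\int_G (g^{*}*h*f)(s)\,\phi(s)\,ds,$$
so that duality $L^p(G)=L^{p'}(G)^{*}$ and H\"older's inequality give
$$ \|\pi_{\eta,\zeta}\|_{L^p(G)}\le\|\phi\|_{L^p(G)}\,\big\|\,h\mapsto g^{*}*h*f\,\big\|_{B(L^{p'}(G))}.$$
Since $\phi\in L^p(G)$ and $\xi$ is cyclic, $\pi$ is an $L^p$-representation (each $\pi_{\pi(f)\xi,\pi(f)\xi}$ lies in $L^p(G)$ by Young's inequality) and hence extends to a $*$-representation of $\CLp=\CPF=C^{*}(\pf_{p}^{*}(G))$, so that $\pi$ is genuinely a representation of the algebra of $p$-convolution operators; the two-sided convolution operator above is bounded on $L^{p'}(G)$ with norm at most $\|g\|_{\pf_{p}^{*}(G)}\|f\|_{\pf_{p}^{*}(G)}$. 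The heart of the matter is to replace this convolution-operator bound by one in terms of the Hilbert-space norms $\|\pi(f)\xi\|$ and $\|\pi(g)\xi\|$, and this is exactly where the Kunze--Stein phenomenon is indispensable: it is the mechanism converting $L^{p'}$-convolution estimates (with $p'<2$) into $L^2$-estimates, which is what allows one to feed in the quantities $\|\pi(f)\xi\|^{2}=\lla\phi,f^{*}*f\rra$. I would run this conversion through an analytic family of operators and Stein interpolation between the trivial $L^1$-bound and the Kunze--Stein $L^2$-bound, using the positive-definiteness of $\phi$ to control the endpoints and thereby pin the final constant $C$ to $\|\phi\|_p$.

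The step I expect to be genuinely hard is making the Kunze--Stein conversion lossless at the exponent $p$. The inequality $\|a*b\|_{2}\le C_{q}\|a\|_{2}\|b\|_{q}$ holds for $q<2$ and its constant blows up as $q\uparrow 2$, so a naive interpolation between the $L^1$- and $L^2$-endpoints controls the coefficient only in $L^{p+\varepsilon}(G)$ for each $\varepsilon>0$; obtaining the estimate at the exact exponent $p$ -- equivalently, upgrading $\bigcap_{\varepsilon>0}L^{p+\varepsilon}(G)$ to $L^{p}(G)$ in the coefficient bound -- is the decisive point, and it is precisely where the characterisation of the coefficients of $\CPF$ obtained earlier stops short. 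I expect closing this endpoint to demand a Lorentz-space (restricted weak-type $(2,q)$) refinement of the Kunze--Stein inequality, sharp enough to be inserted directly into the duality bound above, together with a correspondingly sharp choice of the interpolation parameter dictated by the positive-definiteness of $\phi$. Securing this endpoint is what would turn the present strategy into a full proof of Conjecture \ref{conj:Cow}.
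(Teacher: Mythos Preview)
This statement is labeled as a \emph{Conjecture} in the paper (Conjecture~\ref{conj:Cow}), and the paper does \emph{not} prove it. What the paper establishes is only the near-solution in Corollary~\ref{cor:near-soln}: if $\pi_{\xi,\xi}\in L^{p+\epsilon}(G)$ for all $\epsilon>0$, then $B_\pi\subseteq L^{p+\epsilon}(G)$ for all $\epsilon>0$. There is therefore no proof in the paper to compare your proposal against.

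Your write-up is not a proof either, and you say so yourself. The obstruction you isolate is exactly the one that keeps this a conjecture: the Kunze--Stein inequality $\|a*b\|_2\le C_q\|a\|_2\|b\|_q$ is available only for $q<2$ with $C_q\to\infty$ as $q\uparrow 2$, so interpolation delivers $L^{p+\epsilon}$ control of coefficients for every $\epsilon>0$ but not the endpoint $L^p$. That is precisely the content of Corollary~\ref{cor:near-soln}, reached there via Theorem~\ref{thm:KS} rather than your duality computation. Two further points. First, the identification $\CLp=\CPF$ you invoke is \emph{not} proved in the paper for arbitrary Kunze--Stein groups; Theorem~\ref{thm:KS} gives only $C^*_{L^{p+}}(G)=\CPF$ in that generality, the sharper equality requiring the integrable Haagerup property. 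Second, your bound $\|h\mapsto g^{*}*h*f\|_{B(L^{p'}(G))}\le \|g\|_{\pf_p^*(G)}\|f\|_{\pf_p^*(G)}$ is correct but, as you acknowledge, does not by itself connect to $\|\pi(f)\xi\|$ and $\|\pi(g)\xi\|$; nothing in the paper supplies that passage at the sharp exponent either. In short, your proposal rediscovers the $p+\epsilon$ result by a somewhat different route and correctly names the missing endpoint estimate, but neither you nor the paper closes that gap.
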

Cowling proved if $G$ and $\pi$ satisfy the hypothesis of the above conjecture, then $A_\pi\subseteq L^{p+2}(G)$ (see \cite{Cow}). We significantly improve this result of Cowling. (Recall $A_\pi\subseteq B_\pi$, see Section \ref{sec:FS}).

\begin{thm}[Corollary \ref{cor:near-soln}]
	Let $G$ be a Kunze-Stein group. Suppose $\pi: G\to B(H_\pi)$ is a unitary representation of $G$ admitting a cyclic vector $\xi\in H_\pi$ so that
	$$ \pi_{\xi,\xi}\in L^{p+\epsilon}(G)$$
	for some $2\leq p<\infty$ and every $\epsilon>0$. Then $B_\pi\subseteq L^{p+\epsilon}(G)$ for all $\epsilon>0$.
\end{thm}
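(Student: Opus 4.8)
The plan is to derive the corollary from the characterization, established in the body of the paper, of the positive definite functions that extend to positive linear functionals on $\CPF$ — used with the parameter equal to $p$, since $p\ge 2$. Write $u:=\pi_{\xi,\xi}$, so $u\in L^{p+\epsilon}(G)$ for every $\epsilon>0$. The first step is a Cowling--Haagerup--Howe type majorization which, as with the short proof of Theorem~\ref{thm:CHH}, uses only the cyclicity of $\xi$ and not the Kunze--Stein hypothesis. Since $\xi$ is cyclic, $D=\{\pi(h)\xi:h\in C_c(G)\}$ is dense in $H_\pi$, and for $\eta=\pi(h)\xi$ one has $\|\pi_{\eta,\eta}\|_{L^{p+\epsilon}}\le\|h\|_{L^1}^2\,\|u\|_{L^{p+\epsilon}}<\infty$. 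For self-adjoint $g\in C_c(G)$ and $\eta\in D$, estimating $\lla\pi(g)^{2n}\eta,\eta\rra=\int_G g^{*2n}(s)\pi_{\eta,\eta}(s)\,ds\le\|g^{*2n}\|_{L^{(p+\epsilon)'}}\|\pi_{\eta,\eta}\|_{L^{p+\epsilon}}$ and taking $2n$-th roots bounds the local spectral radius of $\pi(g)$ at $\eta$ by $\limsup_n\|g^{*2n}\|_{L^{(p+\epsilon)'}}^{1/2n}$; since $D$ is dense a spectral-measure argument turns this into a bound on $\|\pi(g)\|$ itself, and interpolating $\|g^{*2n}\|_{L^{(p+\epsilon)'}}$ between $L^1(G)$ and $L^{p'}(G)$ and then letting $\epsilon\downarrow 0$ collapses it to $\|\pi(g)\|\le\|\lambda_{p'}(g)\|_{B(L^{p'}(G))}$, where $\lambda_{p'}$ denotes left convolution on $L^{p'}(G)$. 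Passing from self-adjoint $g$ to general $f$ via $f^**f$ yields $\|\pi(f)\|\le\|f\|_{\pf_p^*(G)}$ for all $f\in C_c(G)$; that is, $\pi$ extends to a $*$-representation of $\CPF$.

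The second step is where the Kunze--Stein hypothesis does its work, precisely in the form in which it governs the passage between $\CLp$ and $\CPF$: for each $\epsilon>0$ the inclusion $L^{(p+\epsilon)'}(G)\hookrightarrow\pf_p^*(G)$ is bounded. For $p=2$ this is exactly the Kunze--Stein inequality, and for $p>2$ it follows by interpolating the Kunze--Stein convolution inequalities against Young's inequality, simultaneously bounding $\|\lambda_p(h)\|_{B(L^p)}$ and $\|\lambda_{p'}(h)\|_{B(L^{p'})}$ by a constant times $\|h\|_{L^{(p+\epsilon)'}}$. Dualizing, every positive linear functional on $\CPF$ is integration against a function in $\bigcap_{\epsilon>0}L^{p+\epsilon}(G)$; quantitatively, if $\rho$ is any $*$-representation of $\CPF$ and $\eta\in H_\rho$, then $\|\rho_{\eta,\eta}\|_{L^{p+\epsilon}}\le C_\epsilon\|\eta\|^2$, with $C_\epsilon$ the norm of the inclusion above.

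Combining the two steps: $\pi$ is a $*$-representation of $\CPF$ by the first step, so every positive definite $w\in B_\pi$ — whose GNS representation is weakly contained in $\pi$ and hence also a $*$-representation of $\CPF$, since $\|\pi_w(f)\|\le\|\pi(f)\|\le\|f\|_{\pf_p^*(G)}$ — lies in $L^{p+\epsilon}(G)$ for all $\epsilon>0$ by the second step. Since $B_\pi$ is the dual of the C*-algebra generated by $\pi$, it is the complex-linear span of the positive definite functions it contains, and therefore $B_\pi\subseteq L^{p+\epsilon}(G)$ for every $\epsilon>0$, as required. I expect the principal difficulty to be the second step — the boundedness of $L^{(p+\epsilon)'}(G)\hookrightarrow\pf_p^*(G)$ — since it is there that the Kunze--Stein phenomenon and the pseudofunction-algebra description of $\CLp$ must be brought to bear, whereas the first step is a soft argument valid for every locally compact group.
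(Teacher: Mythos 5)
Your proposal is correct and follows essentially the paper's own route: your first step reproves the estimate $\|\pi(f)\|\le\|f\|_{\pf_p^*(G)}$ (the content of Lemma \ref{lem:PF-to-CLp} combined with the continuity in $p$ from Proposition \ref{prop:continuous}, i.e.\ $(i)\Rightarrow(ii)$ of Theorem \ref{thm:KS}), and your second step is exactly the interpolated Kunze--Stein embedding $L^{q'}(G)\to\pf_p^*(G)$ of Proposition \ref{P:Kunze-Stein Phenomenon-Lp} and Corollary \ref{T:Kunze-Stein-Lq embeds into convolution operators}, dualized as in $(ii)\Rightarrow(iii)$ of Theorem \ref{thm:KS}. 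The paper's proof of Corollary \ref{cor:near-soln} just observes that cyclicity makes $\pi$ an $L^{p+\epsilon}$-representation for all $\epsilon>0$ and then cites Theorem \ref{thm:KS}, so your argument is the same proof with the intermediate results unpacked inline.
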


\section{Preliminaries and Notation}

We begin by reviewing background and introducing notation that will be used throughout the paper. Additional background will be introduced throughout the paper as required. Since this paper draws upon a wide array of topics, we err on the side of caution and provide possibly more background material than necessary.

\subsection{Symmetrized algebra of $p$-pseudofunctions}

Let $G$ be a locally compact group and $\lambda_p: L^1(G)\to B(L^p(G))$ be defined by
$$ \lambda_p(f)g=f*g$$
for $1\leq p\leq \infty$. Then $\lambda_p$ is a contractive representation of $L^1(G)$ on $L^p(G)$ and the norm closure of $\lambda_p(L^1(G))$ inside of $B(L^p(G))$ is the algebra of \emph{$p$-pseudofunctions} $\pf_p(G)$. Unfortunately, the involution on $L^1(G)$ does not extend to an isometric map on $\pf_{p}(G)$ in general for $p\neq 1,2,\infty$ and, because of this, researchers are beginning to consider a symmetrized version of $\pf_{p}(G)$.

Let $p\in [1,\infty)$ and $q\in (1,\infty]$ be conjugate, i.e. $\frac{1}{p}+\frac{1}{q}=1$, and consider the (conjugate) duality relation $L^p(G)^*\cong L^q(G)$ given by
$$\la g , h \ra:=\int_G g(s)\overline{h(s)}\,ds$$
for $g\in L^p(G)$ and $h\in L^q(G)$.
A straightforward calculation shows if $f\in L^1(G)$, $g\in L^p(G)$ and $h\in L^q(G)$, then
\begin{equation*}
\la \lambda_p(f^*)g, h \ra = \la g , \lambda_q(f)h \ra,
\end{equation*}
where the involution of an element $f\in L^1(G)$ is given by $f^*(s)=\overline{f(s^{-1})}\Delta(s^{-1})$ (here $\Delta$ denotes the modular function of $G$).
It follows that
\begin{equation}\label{Eq:adjoint}
\|\lambda_p(f^*)\|_{B(L^p(G))}=\|\lambda_q(f)\|_{B(L^q(G))}
\end{equation}
for every $f\in L^1(G)$ for all conjugate $p,q\in [1,\infty]$.

Let $p\in [1,\infty]$. It is shown in \cite[Propsoition 4.2]{SW} that the group algebra $L^1(G)$ is a normed $*$-algebra with respect to norm $\|\cdot\|_{\pf_{p}^*(G)}$ defined by
\begin{equation}\label{Eq:involutive pseudofunctions-norm}
\|f\|_{\pf_p^*(G)}:=\max \{\|\lambda_p(f)\|_{B(L^p(G))},\|\lambda_q(f)\|_{B(L^q(G))} \} \ \ \ (f\in L^1(G)),
\end{equation}
and the standard convolution and involution on $L^1(G)$ where $q\in [1,\infty]$ is the conjugate of $p$. The \emph{symmetrized algebra of $p$-pseudofunctions} $\pf_{p}^*(G)$ is the Banach $*$-algebra that is the completion of $(L^1(G),\|\cdot\|_{\pf_p^*(G)},*)$. It follows from \cite[Proposition 4.5]{SW} that $\pf_{p}^*(G)$ is $*$-semisimple so that it embeds injectively into its C$^*$-envelope $C^*(\pf_{p}^*(G))$.

\begin{rem}\label{rem:symm conv alg-property}
(i) It is immediate that if $q$ is the conjugate of $p$, then $\pf_p^*(G)=\pf_q^*(G)$ isometrically as Banach $*$-algebras, $\pf_1^*(G)=\pf_{\infty}^*(G)=L^1(G)$, and $\pf_2^*(G)=C^*_r(G)$. As mentioned in the introduction, these algebras have been considered by Kasparov-Yu (see \cite{KY}) and Liao-Yu (see \cite{LY}) in relation to Baum-Connes conjecture and by the present authors in the recent affirmative solution to the longstanding question of whether every Hermitian locally compact group is amenable (see \cite{SW}).

(ii) One can equivalently define the norm $\|\cdot\|_{\pf_p^*(G)}$ by
	$$ \|f\|_{\pf_{p}^*}=\max\{\|\lambda_p(f)\|,\|\lambda_p(f^*)\|\}$$
	for $f\in L^1(G)$, but this is typically less useful than the previous definition since interpolation techniques tend to be more easily applied to Equation \eqref{Eq:involutive pseudofunctions-norm}.
\end{rem}

We will need the following result about the symmetrized $p$-pseudo functions.

\begin{lem}\label{lem:SW}
	Let $G$ be a locally compact group.
	\begin{enumerate}[$(a)$]
		\item If $1\leq p_1<p_2<p_3\leq \infty$, then
		$$ \|f\|_{\pf^*_{p_2}(G)}\leq \|f\|_{\pf^*_{p_1}(G)}^{1-\theta}\|f\|_{\pf^*_{p_3}(G)}^\theta$$
		for every $f\in L^1(G)$ where $\theta\in (0,1)$ is the solution to $\frac{1}{p_2}=\frac{1-\theta}{p_1}+\frac{\theta}{p_3}$.
		\item If $2\leq p'\leq p\leq \infty$, then the identity map on $L^1(G)$ extends to a contractive injection $\pf_{p}^*(G)\to \pf^*_{p'}(G)$.
	\end{enumerate}
\end{lem}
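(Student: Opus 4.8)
For part $(a)$, the plan is to use complex interpolation applied to the defining formula \eqref{Eq:involutive pseudofunctions-norm} of the $\pf_p^*$-norm, which is exactly why Remark \ref{rem:symm conv alg-property}(ii) points to that formulation. Fix $f\in L^1(G)$. The operator $\lambda_p(f)$ acts on $L^p(G)$ for every $p$, and by the Riesz--Thorin theorem the map $p\mapsto \|\lambda_p(f)\|_{B(L^p(G))}$ is log-convex in $1/p$: for $1\le p_1<p_2<p_3\le\infty$ with $\frac1{p_2}=\frac{1-\theta}{p_1}+\frac{\theta}{p_3}$ one has $\|\lambda_{p_2}(f)\|\le\|\lambda_{p_1}(f)\|^{1-\theta}\|\lambda_{p_3}(f)\|^{\theta}$. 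Now let $q_i$ be conjugate to $p_i$; since $p_1<p_2<p_3$ we have $q_1>q_2>q_3$, and the same interpolation gives $\|\lambda_{q_2}(f)\|\le\|\lambda_{q_3}(f)\|^{1-\theta'}\|\lambda_{q_1}(f)\|^{\theta'}$ for the appropriate exponent; one checks the exponent relating $1/q_2$ to $1/q_1,1/q_3$ is governed by the same $\theta$ (because $1/q=1-1/p$ is an affine change of variable, so the barycentric coordinate of $1/q_2$ between $1/q_1$ and $1/q_3$ is $1-\theta$ on one side, $\theta$ on the other). Then
$$
\|f\|_{\pf^*_{p_2}}=\max\{\|\lambda_{p_2}(f)\|,\|\lambda_{q_2}(f)\|\}
\le\max\{\|\lambda_{p_1}(f)\|,\|\lambda_{q_1}(f)\|\}^{1-\theta}\max\{\|\lambda_{p_3}(f)\|,\|\lambda_{q_3}(f)\|\}^{\theta},
$$
using that $\max$ of products of nonnegative reals is bounded by the product of the maxima; the right-hand side is $\|f\|_{\pf^*_{p_1}}^{1-\theta}\|f\|_{\pf^*_{p_3}}^{\theta}$. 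I would double-check the statement's typo ($\frac{1}{p_2}=\frac{1-\theta}{p_1}+\frac{\theta}{p_2}$ should read $\cdots+\frac{\theta}{p_3}$) and present it correctly.

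For part $(b)$, I would argue: since $p'\le p$ and $2\le p'$, the conjugate exponents satisfy $q\le q'\le 2\le p'\le p$. Then $\|\lambda_{p'}(f)\|_{B(L^{p'})}\le\|f\|_{\pf_p^*}$ is immediate once we know $\|\lambda_{p'}(f)\|\le\max\{\|\lambda_p(f)\|,\|\lambda_q(f)\|\}$; this last inequality follows from part $(a)$ (interpolate $p'$ between $q$ and $p$, noting $q<p'<p$, to get $\|\lambda_{p'}(f)\|\le\|\lambda_q(f)\|^{1-\theta}\|\lambda_p(f)\|^{\theta}\le\max\{\cdots\}$), or alternatively directly from Riesz--Thorin applied on the segment from $q$ to $p$ on the $L^r$-scale, using $\|\lambda_p(f^*)\|_{B(L^p)}=\|\lambda_q(f)\|_{B(L^q)}$ from \eqref{Eq:adjoint}. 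Hence the identity map $L^1(G)\to\pf_{p'}(G)$ is contractive for the $\pf_p^*$-norm and extends to a contraction $\pf_p^*(G)\to\pf_{p'}(G)$. For injectivity, I would invoke that $\pf_{p'}^*(G)$ is $*$-semisimple (Remark/Proposition cited from \cite{SW}): the composition $\pf_p^*(G)\to\pf_{p'}(G)$ factors through $\pf_{p'}^*(G)\to\pf_{p'}(G)$; wait — more carefully, the contraction $\pf_p^*(G)\to\pf_{p'}^*(G)$ exists by the same max-estimate, and $\pf_{p'}^*(G)\hookrightarrow\pf_{p'}(G)$ is injective because $\lambda_{p'}$ is faithful on $L^1(G)$ (the left regular representation on $L^{p'}$ is injective for $1\le p'<\infty$) and $\pf_{p'}^*$ is by construction a completion on which $\lambda_{p'}$ extends; so it suffices that $\pf_p^*(G)\to\pf_{p'}^*(G)$ is injective, which again is a consequence of $*$-semisimplicity of $\pf_p^*(G)$ together with the fact that $L^p$-bounded $*$-representations are $\|\cdot\|_{\pf_{p'}^*}$-bounded. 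I would state this cleanly using that both algebras are completions of $L^1(G)$ and $L^1(G)$ is dense, and that an element of the kernel would have $\lambda_{p'}$-image zero hence (by $p'<\infty$ and continuity) be zero in $\pf_{p'}(G)$, whence zero in $\pf_{p'}^*(G)$; combined with injectivity of $\pf_{p'}^*(G)\to C^*(\pf_{p'}^*(G))$ one concludes.

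The main obstacle is bookkeeping rather than depth: getting the interpolation exponent $\theta$ and the conjugate exponents lined up correctly (the affine flip $1/q=1-1/p$ reverses order and swaps $\theta\leftrightarrow 1-\theta$, so the two $\max$-entries must be paired with the \emph{same} endpoints to make the product estimate go through), and, in $(b)$, being careful about the two distinct targets $\pf_{p'}(G)$ versus $\pf_{p'}^*(G)$ and where injectivity actually comes from — it is ultimately the faithfulness of $\lambda_{p'}$ on $L^1(G)$ for $p'<\infty$ plus $*$-semisimplicity quoted from \cite{SW}, not anything requiring new work here.
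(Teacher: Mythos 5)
Your part $(a)$ is correct, and it is essentially the argument the paper is relying on: the paper gives no proof of this lemma, deferring to \cite[Proposition 4.5]{SW}, and the Riesz--Thorin computation you describe (log-convexity of $p\mapsto\|\lambda_p(f)\|_{B(L^p(G))}$ in $1/p$, applied simultaneously on the $p$-side and the $q$-side) is exactly what is contained in that proof. One small correction to your bookkeeping: since $1/q=1-1/p$ is affine, $\frac{1}{q_2}=\frac{1-\theta}{q_1}+\frac{\theta}{q_3}$ with the \emph{same} $\theta$, so the endpoints pair as $(p_1,q_1)$ with exponent $1-\theta$ and $(p_3,q_3)$ with exponent $\theta$; there is no swap $\theta\leftrightarrow 1-\theta$. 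Your final displayed inequality is nevertheless the right one, via $\max\{a^{1-\theta}b^\theta,c^{1-\theta}d^\theta\}\le\max\{a,c\}^{1-\theta}\max\{b,d\}^\theta$, and you are right that the statement contains a typo ($\frac{\theta}{p_2}$ should be $\frac{\theta}{p_3}$). The contractivity half of $(b)$ is likewise fine.

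The injectivity half of $(b)$ has a genuine gap. First, the asserted fact that every $\|\cdot\|_{\pf_p^*}$-bounded $*$-representation is $\|\cdot\|_{\pf_{p'}^*}$-bounded is false in general: combined with the trivial reverse containment it would force $C^*(\pf_p^*(G))=C^*(\pf_{p'}^*(G))$ for all $G$, contradicting the main theorems of this paper (e.g.\ for free groups). Second, the more elementary route conflates faithfulness of $\lambda_{p'}$ on $L^1(G)$ with injectivity on the completion: an element of $\ker\big(\pf_p^*(G)\to\pf_{p'}(G)\big)$ is a $\|\cdot\|_{\pf_p^*}$-limit of $L^1$-functions but need not be an $L^1$-function, and a contraction between two completions of a common dense $*$-subalgebra need not be injective (compare $C^*(G)\to C^*_r(G)$). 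The missing ingredient is consistency of the extended convolution operators: for $a\in\pf_p^*(G)$ with approximating net $f_n\in L^1(G)$, the pairings $\la \lambda_r(f_n)g,h\ra=\la f_n*g,h\ra$ with $g,h\in C_c(G)$ do not depend on $r$, so $\lambda_p(a)$ and $\lambda_q(a)$ agree with $\lambda_{p'}(a)$ against $C_c(G)\times C_c(G)$; if $\lambda_{p'}(a)=0$ this forces $\lambda_p(a)=\lambda_q(a)=0$ (using density of $C_c(G)$ in $L^r(G)$ for $r<\infty$, the case $p=\infty$ being trivial since $\pf_\infty^*(G)=L^1(G)$), and then $a=0$ because $\|a\|_{\pf_p^*}=\max\{\|\lambda_p(a)\|,\|\lambda_q(a)\|\}$. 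The paper explicitly never uses injectivity of this map, so the gap does not propagate into the rest of the paper, but it is a gap in the proof of the statement as written.
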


Lemma \ref{lem:SW} is proved in \cite{SW}. Lemma \ref{lem:SW}(b) is given by \cite[Proposition 4.5]{SW} whereas Lemma \ref{lem:SW}(a) is contained in its proof. Though we will not use the fact that the map described in Lemma \ref{lem:SW}(b) is injective, it is nice to know that this implies the identity map on $L^1(G)$ extends to an injective $*$-homomorphism $\pf_{p}^*(G)\to C^*_r(G)$ for every $1\leq p\leq \infty$. This, in particular, implies the identity map from $\pf_{p}^*(G)$ into its enveloping C*-algebra $\CPF$ is necessarily injective for each $1\leq p\leq \infty$.

\subsection{Fourier-Stieltjes algebra}\label{sec:FS}\label{subsec:F-S}
The results in this subsection are standard and can all be found in \cite{arsac} unless a reference is given elsewhere.

Let $G$ be a locally compact group. The \emph{Fourier-Stieltjes algebra} of $G$ is the set all matrix coefficent functions of $G$
$$ B(G):=\{\pi_{\xi,\eta} \mid \pi: G\to B(H_\pi)\text{ is a unitary representation of $G$ and }\xi,\eta\in H_\pi\}.$$
The Fourier-Stieltjes algebra $B(G)$ is a Banach algebra with respect to pointwise operations and norm
$$ \|u\|_{B(G)}:=\inf\{\|\xi\|\|\eta\|: u=\pi_{\xi,\eta}\text{ for some unitary representation }\pi:G\to B(H_\pi),\ \xi,\eta\in H_\pi\}.$$
Then $B(G)$ is naturally the dual of the full group C*-algebra $C^*(G)$ via the dual pairing
$$ \lla f, u\rra:=\int_G f(s)u(s)\,ds$$
for $u\in B(G)$ and $f$ belonging to the dense subspace $L^1(G)$ of $C^*(G)$.

Suppose $A$ is a completion of $L^1(G)$ with respect to a C*-seminorm. Then $A$ is canonically a quotient of $C^*(G)$ and, as such, $A^*$ canonically identifies with a weak*-closed subspace of $B(G)$. If $\pi: G\to B(H_\pi)$ is any representation of $G$, then the canonical dual of $\overline{\pi(L^1(G))}^{\|\cdot\|_{B(H_\pi)}}$ is
$$ B_\pi:=\overline{\text{span}\{\pi_{\xi,\eta} : \xi,\eta\in H_\pi\}}^{\sigma(B(G),C^*(G))}.$$
We further note that if $\pi: G\to B(H_\pi)$, then
$$ A_\pi:=\overline{\text{span}\{\pi_{\xi,\eta} : \xi,\eta\in H_\pi\}}^{\|\cdot\|_{B(G)}}$$
is canonically the predual of the von Neumann algebra $\pi(G)''$ and it is necessarily true that $A_\pi$ is contained in $B_\pi$. If $\pi$ is a representation of $G$, then $B_\pi$ is the span of positive definite functions in $B(G)$ that are the limit of positive definite functions in $A_\pi$ in the topology of uniform convergence on compact subsets of $G$.

In terms of notation, we will let $ B_{L^p}(G)$ and $B_{\pf_{p}^*}(G)$ denote the subspaces of $B(G)$ corresponding to the dual spaces of $\CLp$ and $\CPF$, respectively,  for $1\leq p\leq \infty$. Then $B_{L^p}(G)$ is an ideal of $B(G)$ (see \cite{W-Fourier}).

\subsection{$L^p$-representations and C*-algebras}

We will require an estimate for $\|\cdot\|_{\CLp}$. First recall the following representation theoretic result.

\begin{lem}[Cowling-Haagerup-Howe, see proof of {\cite[Theorem 1]{CHH}}]\label{lem:CHH}
	If $\pi: G\to B(H_\pi)$ is a unitary representation of a locally compact group $G$ and $H_0$ is a dense subspace of $H_\pi$, then
	$$\|\pi(f)\|=\sup_{\xi\in H_0}\lim_{n\to\infty}\left\langle\pi\big((f^**f)^{(*n)}\big)\xi,\xi\right\rangle^{\frac{1}{2n}}$$
	for every $f\in L^1(G)$.
\end{lem}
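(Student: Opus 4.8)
The plan is to read the right-hand side as a ``local'' formula for the spectral radius of the positive operator $b:=\pi(f^{*}*f)=\pi(f)^{*}\pi(f)$, whose norm is $\|b\|=\|\pi(f)\|^{2}$. Since the integrated form of $\pi$ is an algebra homomorphism for convolution and satisfies $\pi(f^{*})=\pi(f)^{*}$, one has $\pi\big((f^{*}*f)^{(*n)}\big)=b^{n}$, so the quantity under the limit equals $\la b^{n}\xi,\xi\ra^{1/2n}$, a well-defined nonnegative real number because $b^{n}\geq 0$.

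First I would fix $\xi\in H_{0}$ and apply the spectral theorem to $b$, writing $\la b^{n}\xi,\xi\ra=\int_{\sigma(b)}t^{n}\,d\mu_{\xi}(t)$ for the scalar spectral measure $\mu_{\xi}$, a finite positive measure supported in $[0,\|b\|]$ with total mass $\|\xi\|^{2}$. Setting $M_{\xi}:=\sup\supp\mu_{\xi}$, the elementary bounds
$$(M_{\xi}-\delta)^{n}\,\mu_{\xi}\big([M_{\xi}-\delta,\|b\|]\big)\ \leq\ \int_{\sigma(b)}t^{n}\,d\mu_{\xi}(t)\ \leq\ M_{\xi}^{n}\,\|\xi\|^{2},$$
valid for every $\delta>0$, show that the limit exists and equals $M_{\xi}^{1/2}$ when $\xi\neq 0$ (the factors $\mu_{\xi}([M_{\xi}-\delta,\|b\|])^{1/2n}$ and $\|\xi\|^{1/n}$ tending to $1$), while it is $0$ when $\xi=0$. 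Because $M_{\xi}\leq\|b\|$ for every $\xi$, taking the supremum over $\xi\in H_{0}$ immediately gives the inequality ``$\leq\|\pi(f)\|$''.

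For the reverse inequality I would exploit that $b$ is positive, so $\|b\|=\max\sigma(b)$ actually lies in $\sigma(b)$; hence for each $\epsilon>0$ the spectral projection $E:=E_{b}\big([\|b\|-\epsilon,\|b\|]\big)$ is nonzero. Since $H_{0}$ is dense in $H_{\pi}$ and $E$ is a nonzero bounded operator, there exists $\xi\in H_{0}$ with $E\xi\neq 0$, and then $\mu_{\xi}\big([\|b\|-\epsilon,\|b\|]\big)=\|E\xi\|^{2}>0$, forcing $M_{\xi}\geq\|b\|-\epsilon$. By the previous paragraph this particular $\xi$ satisfies $\lim_{n\to\infty}\la b^{n}\xi,\xi\ra^{1/2n}\geq(\|b\|-\epsilon)^{1/2}$, so the supremum over $H_{0}$ is at least $(\|b\|-\epsilon)^{1/2}$; letting $\epsilon\to 0$ yields ``$\geq\|b\|^{1/2}=\|\pi(f)\|$'' and completes the argument.

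The only genuinely delicate point is that the supremum runs over a merely dense subspace $H_{0}$ rather than all of $H_{\pi}$: a fixed vector's spectral measure can ``see'' the value $\|b\|$ only if the top spectral projection of $b$ fails to annihilate it, and the thing to verify is that this cannot fail simultaneously for every vector of a dense set — which is exactly the density/continuity step above. Apart from that, the proof is routine spectral theory and uses nothing about $G$ or $\pi$ beyond unitarity of $\pi$.
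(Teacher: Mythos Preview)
Your argument is correct. The paper itself does not supply a proof of this lemma; it is quoted directly from the original Cowling--Haagerup--Howe paper (the citation ``see proof of \cite[Theorem~1]{CHH}'' is all that is given), so there is no in-paper proof to compare against. Your spectral-measure argument --- identifying $\lim_{n}\langle b^{n}\xi,\xi\rangle^{1/2n}$ with $(\sup\supp\mu_{\xi})^{1/2}$ and then using density of $H_{0}$ to hit the top spectral projection --- is a standard and complete way to establish the formula, and it is essentially the reasoning Cowling, Haagerup and Howe use.
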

\noindent This is the first step involved in the original proof of the Cowling-Haagerup-Howe Theorem (see Theorem \ref{thm:CHH}) and it will also be implicitly used in our proof.

The following proposition has been established in the case of discrete groups by Okayasu (see \cite{Okay}). The proof in the locally compact case is similar. We include it for the sake of completeness.

\begin{lem}\label{lem:Okayasu}
	Let $G$ be a locally compact group. If $1\leq q\leq p\leq\infty$ and $\frac{1}{p}+\frac{1}{q}=1$, then
	$$ \|f\|_{C^*_{L^p}(G)}\leq \liminf_{n\to\infty}\left\|(f^* *f)^{(*n)}\right\|_{L^q(G)}^{\frac{1}{2n}}$$
	for every $f\in C_c(G)$.
\end{lem}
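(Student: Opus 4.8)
The plan is to combine the Cowling–Haagerup–Howe formula (Lemma \ref{lem:CHH}) with the elementary observation that, for an $L^p$-representation, the relevant matrix coefficients lie in $L^p(G)$ and hence can be paired against $L^q(G)$ functions. Let $\pi\from G\to B(H_\pi)$ be an arbitrary $L^p$-representation of $G$, and let $H_0$ be a dense subspace of $H_\pi$ witnessing this, so that $\pi_{\xi,\xi}\in L^p(G)$ for every $\xi\in H_0$. By Lemma \ref{lem:CHH},
$$\|\pi(f)\|=\sup_{\xi\in H_0}\lim_{n\to\infty}\left\langle\pi\big((f^**f)^{(*n)}\big)\xi,\xi\right\rangle^{\frac{1}{2n}}$$
for every $f\in C_c(G)\subseteq L^1(G)$, so it suffices to estimate, for fixed $\xi\in H_0$ of norm one say, the quantity $\left\langle\pi(g_n)\xi,\xi\right\rangle$ where $g_n:=(f^* *f)^{(*n)}\in C_c(G)$.

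The key step is the pointwise identity: for $g\in C_c(G)$,
$$\left\langle\pi(g)\xi,\xi\right\rangle=\int_G g(s)\,\overline{\pi_{\xi,\xi}(s)}\,ds=\la g,\ \overline{\pi_{\xi,\xi}}\,\ra,$$
where I use that $\pi_{\xi,\xi}(s)=\la\pi(s)\xi,\xi\ra$ and that $\pi_{\xi,\xi}$ is positive-definite (so $\overline{\pi_{\xi,\xi}}\in L^p(G)$ as well). Applying Hölder's inequality with the conjugate pair $(q,p)$ gives
$$\left|\left\langle\pi(g_n)\xi,\xi\right\rangle\right|\le \|g_n\|_{L^q(G)}\,\|\pi_{\xi,\xi}\|_{L^p(G)}.$$
Now take $2n$-th roots, let $n\to\infty$, and observe that $\|\pi_{\xi,\xi}\|_{L^p(G)}^{1/(2n)}\to 1$ since this constant does not depend on $n$; this yields
$$\lim_{n\to\infty}\left\langle\pi(g_n)\xi,\xi\right\rangle^{\frac{1}{2n}}\le \liminf_{n\to\infty}\|g_n\|_{L^q(G)}^{\frac{1}{2n}}=\liminf_{n\to\infty}\left\|(f^* *f)^{(*n)}\right\|_{L^q(G)}^{\frac{1}{2n}}.$$
(One should note that $\left\langle\pi(g_n)\xi,\xi\right\rangle\ge 0$ since $g_n=(f^**f)^{(*n)}$ is a positive element of the relevant $*$-algebra — indeed $(f^**f)^{*n}=h_n^**h_n$ with $h_n=(f^**f)^{*(n-1)/2}*f$ for $n$ odd, and similarly for $n$ even — so the $2n$-th root is legitimately real and the $\lim$ exists by Lemma \ref{lem:CHH}.) Taking the supremum over $\xi\in H_0$ gives $\|\pi(f)\|\le \liminf_n\|g_n\|_{L^q(G)}^{1/(2n)}$, and since this bound is uniform over all $L^p$-representations $\pi$, taking the supremum over $\pi$ yields $\|f\|_{C^*_{L^p}(G)}\le \liminf_n\|(f^**f)^{(*n)}\|_{L^q(G)}^{1/(2n)}$, as desired.

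The main obstacle is essentially bookkeeping rather than a genuine difficulty: one must be careful that $\pi_{\xi,\xi}\in L^p(G)$ only for $\xi$ in the dense subspace $H_0$ (not all of $H_\pi$), which is exactly why Lemma \ref{lem:CHH} is invoked with $H_0$ in place of $H_\pi$; and one must handle the case $p=\infty$, $q=1$ separately (or simply note $L^1\subseteq L^1$ makes the Hölder step trivial and the bound is the well-known estimate $\|f\|_{C^*(G)}\le\liminf_n\|(f^**f)^{(*n)}\|_{L^1(G)}^{1/(2n)}$). A secondary point worth stating cleanly is that $\|\pi_{\xi,\xi}\|_{L^p(G)}<\infty$ is a constant depending on $\xi$ but not on $n$, so it is killed in the limit — this is the crux of why the $L^q$-norms of the iterated convolutions control the $C^*_{L^p}$-norm.
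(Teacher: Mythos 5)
Your proof is correct and follows essentially the same route as the paper's: apply Lemma \ref{lem:CHH} over the dense subspace $H_0$, rewrite $\langle\pi(g_n)\xi,\xi\rangle$ as the integral of $g_n$ against $\pi_{\xi,\xi}$, apply H\"older with the conjugate pair $(q,p)$, and note that $\|\pi_{\xi,\xi}\|_{L^p(G)}^{1/2n}\to 1$ before taking suprema. (The only blemish is the spurious complex conjugate in the middle of your displayed identity --- $\langle\pi(g)\xi,\xi\rangle=\int_G g(s)\,\pi_{\xi,\xi}(s)\,ds$ with no bar --- which is harmless since H\"older only sees $|\pi_{\xi,\xi}|$.)
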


\begin{proof}
	Suppose $\pi: G\to B(H_\pi)$ is an $L^p$-representation of $G$ and $H_0$ is a dense subspace of $H_\pi$ so that $\pi_{\xi,\xi}\in L^p(G)$ for every $\xi\in H_0$.
	By Lemma \ref{lem:CHH},
	\begin{eqnarray*}
		\|\pi(f)\| &=& \sup_{\xi\in H_0}\liminf_{n\to\infty}\left(\int_G(f^**f)^{(*n)}(s)\pi_{\xi,\xi}(s)\,ds\right)^{\frac{1}{2n}}\\
		&\leq& \sup_{\xi\in H_0}\liminf_{n\to\infty}\|(f^**f)^{(*n)}\|^{\frac{1}{2n}}_{L^q(G)}\|\pi_{\xi,\xi}\|^{\frac{1}{2n}}_{L^p(G)}\\
		&=& \liminf_{n\to\infty}\|(f^**f)^{(*n)}\|^{\frac{1}{2n}}_{L^q(G)}.
	\end{eqnarray*}
	Hence,
	$$\|f\|_{C^*_{L^p}(G)}=\sup\big\{\|\pi(f)\| : \pi\text{ is an $L^p$-representation of $G$}\big\}\leq \liminf_{n\to\infty}\|(f^**f)^{(*n)}\|^{\frac{1}{2n}}_{L^q(G)}.$$
\end{proof}

Let $G$ be a locally compact group. We will be considering a variant of the $L^p$-C*-algebra $C^*_{L^p}(G)$. Fix $1\leq p\leq\infty$ and define a C*-norm $\|\cdot\|_{C^*_{L^{p+}}(G)}$ on $L^1(G)$ by
\begin{eqnarray*}
\|f\|_{C^*_{L^{p+}}(G)} &:=& \sup\big\{\|\pi(f)\|: \pi\text{ is an }L^{p+\epsilon}\text{-representation of $G$ for every }\epsilon>0\big\}\\
&=& \lim_{\epsilon\to 0^+} \|f\|_{C^*_{L^{p+\epsilon}}(G)}.
\end{eqnarray*}
Note that the value above is potentially different from
$$\sup\big\{\|\pi(f)\|: \pi\text{ is a }\bigcap_{\epsilon>0}L^{p+\epsilon}\text{ representation of $G$}\big\},$$
though we conjecture that these two definitions coincide (see Conjecture \ref{conjecture} for a more general conjecture).
We define $C^*_{L^{p+}}(G)$ to be the completion of $L^1(G)$ with respect to $\|\cdot\|_{C^*_{L^{p+}}(G)}$. Then
\begin{align*}\label{Eq:Defn C-p+}
 C^*_{L^{p+}}(G)^*=\bigcap_{\epsilon>0}B_{L^{p+\epsilon}}(G)
\end{align*}
canonically. Indeed, this construction is also considered in \cite{Okay} and \cite{BEW2}.

\subsection{Bilinear interpolation}
Let $(X,\mu)$ be a $\sg$-finite measurable space. For any non-negative measurable function $w$ on $X$ and $p\geq 1$, we define
\begin{equation}\label{Eq:weigh lp-notation}
L^p(w)=L^p(X,wd\mu).
\end{equation}
By \cite[Theorems 5.5.3.]{BL}, { for $1\leq p_0<q< p_1 < \infty$ } and measurable functions $w_0$ and $w_1$ on $X$, we have the complex interpolation pair
\begin{equation}\label{Eq:interpolation-weigh lp}
(L^{p_0}(w_0),L^{p_1}(w_1))_\theta=L^q(w),
\end{equation}
where $0<\theta<1$ and $w$ is the measurable function satisfying
\begin{equation}\label{Eq:interpolation-weigh lp-relations}
\frac{1}{q}=\frac{1-\theta}{p_0}+\frac{\theta}{p_1} \ \ , \ \ w=w_0^{\frac{q(1-\theta)}{p_0}}w_1^{\frac{q\theta}{p_1}}.
\end{equation}
{
Moreover, by \cite[Theorems 5.1.1. and 5.1.2.]{BL}), we also have that
\begin{equation*}
(L^{p_0}(\mu),L^\infty(\mu))_\theta=L^p(\mu),
\end{equation*}
where
\begin{equation*}
\frac{1}{p}=\frac{1-\theta}{p_0}.
\end{equation*}
}

We will also use the following result on interpolation of bilinear maps.

\begin{thm}[{\cite[Section 10.1]{Cal}}]\label{thm:bilinear-interpol}
	Suppose $(\fA_i,\fB_i)$ are interpolation pairs for $i=1,2,3$ and
	$$T_\fA:\fA_1\times \fA_2 \to \fA_3  \ \ ,\ \ T_\fB:\fB_1\times \fB_2 \to \fB_3$$
	are bounded bilinear maps so that $T_\fA$ and $T_\fB$ coincide on $(\fA_1\cap\fB_1)\times (\fA_2\cap \fB_2)$. Set $T: (\fA_1\cap\fB_1)\times (\fA_2\cap \fB_2)\to \fA_3\cap \fB_3$ to be the restriction $T_\fA$ (or $T_\fB$) to $(\fA_1\cap \fB_1)\times (\fA_2\cap \fB_2)$. Fix $\theta\in (0,1)$ and let $\fC_{i}=(\fA_i,\fB_i)_\theta$, $i=1,2,3$ be the interpolation of $\fA_i$ and $\fB_i$ with the parameter $\theta$. Then $T$ extends to a bounded bilinear map
	\begin{equation*}
	T_\fC: \fC_1\times \fC_2 \to \fC_3
	\end{equation*}
	such that
	\begin{equation*}
	\|T_\fC\|\leq \|T_\fA\|^{1-\theta}\|T_\fB\|^\theta.
	\end{equation*}
\end{thm}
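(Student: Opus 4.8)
The plan is to prove this by Calderón's complex interpolation method, constructing an analytic function on the strip whose boundary values are controlled by $T_\fA$ and $T_\fB$ \emph{separately} and whose value at $\theta$ recovers $T(a_1,a_2)$. Recall that for an interpolation pair $(\fA,\fB)$ one works with the Banach space $\F(\fA,\fB)$ of functions $f$ on the closed strip $\overline{S}=\{z\in\C : 0\le\operatorname{Re} z\le 1\}$ that are continuous and bounded into $\fA+\fB$, holomorphic in the interior, with $t\mapsto f(it)$ continuous into $\fA$ and $t\mapsto f(1+it)$ continuous into $\fB$, normed by $\|f\|_\F=\max\{\sup_t\|f(it)\|_\fA,\ \sup_t\|f(1+it)\|_\fB\}$; then $(\fA,\fB)_\theta=\{f(\theta):f\in\F(\fA,\fB)\}$ with the quotient norm. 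Since $\fA_i\cap\fB_i$ is dense in $\fC_i=(\fA_i,\fB_i)_\theta$ and $T$ is already defined there, it suffices to establish the estimate $\|T(a_1,a_2)\|_{\fC_3}\le \|T_\fA\|^{1-\theta}\|T_\fB\|^\theta\,\|a_1\|_{\fC_1}\|a_2\|_{\fC_2}$ for $a_i\in\fA_i\cap\fB_i$, and then extend $T$ to $T_\fC$ by bilinear continuity.

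Fix $a_i\in\fA_i\cap\fB_i$ and $\varepsilon>0$. First I would choose representatives $f_i\in\F(\fA_i,\fB_i)$ with $f_i(\theta)=a_i$ and $\|f_i\|_\F\le\|a_i\|_{\fC_i}+\varepsilon$, taken from the dense subspace of $\F(\fA_i,\fB_i)$ consisting of finite sums $\sum_j e^{\lambda_j z}x_j$ with $\lambda_j\in\R$ and $x_j\in\fA_i\cap\fB_i$, so that $f_i(z)\in\fA_i\cap\fB_i$ for \emph{every} $z$. This is what makes the composition meaningful: since $T_\fA$ and $T_\fB$ agree on $(\fA_1\cap\fB_1)\times(\fA_2\cap\fB_2)$, the function $G(z):=T(f_1(z),f_2(z))$ is unambiguously defined, and expanding the two finite sums exhibits $G$ as a finite sum of terms $e^{(\lambda_j+\mu_k)z}T(x_j,y_k)$ with $T(x_j,y_k)\in\fA_3\cap\fB_3$; hence $G$ is holomorphic into $\fA_3\cap\fB_3\hookrightarrow\fA_3+\fB_3$ with $G(\theta)=T(a_1,a_2)$.

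Assuming both norms positive (the cases $\|T_\fA\|=0$ or $\|T_\fB\|=0$ force $T=0$ on the intersections, whence $T_\fC=0$ and the bound holds trivially since $1-\theta,\theta>0$), I would normalize and regularize by setting
$$F_\delta(z)=e^{\delta(z^2-\theta^2)}\,\mu^{\,z-\theta}\,G(z),\qquad \mu:=\frac{\|T_\fA\|}{\|T_\fB\|},\ \delta>0.$$
The Gaussian factor forces decay as $|\operatorname{Im} z|\to\infty$, so $F_\delta\in\F(\fA_3,\fB_3)$, while $F_\delta(\theta)=G(\theta)=T(a_1,a_2)$. On the boundary lines $|\mu^{z-\theta}|=\mu^{\operatorname{Re} z-\theta}$ and $|e^{\delta(z^2-\theta^2)}|\le e^{\delta}$, so bilinearity gives $\|F_\delta(it)\|_{\fA_3}\le e^{\delta}\mu^{-\theta}\|T_\fA\|\,\|f_1\|_\F\|f_2\|_\F$ and $\|F_\delta(1+it)\|_{\fB_3}\le e^{\delta}\mu^{1-\theta}\|T_\fB\|\,\|f_1\|_\F\|f_2\|_\F$. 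The value of $\mu$ is exactly the one balancing the two right-hand sides, since $\mu^{-\theta}\|T_\fA\|=\mu^{1-\theta}\|T_\fB\|=\|T_\fA\|^{1-\theta}\|T_\fB\|^\theta$. Thus $\|F_\delta\|_\F\le e^{\delta}\|T_\fA\|^{1-\theta}\|T_\fB\|^\theta\|f_1\|_\F\|f_2\|_\F$, and since $F_\delta(\theta)=T(a_1,a_2)$ the quotient-norm definition yields $\|T(a_1,a_2)\|_{\fC_3}\le e^{\delta}\|T_\fA\|^{1-\theta}\|T_\fB\|^\theta(\|a_1\|_{\fC_1}+\varepsilon)(\|a_2\|_{\fC_2}+\varepsilon)$. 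Letting $\delta\to 0$ and $\varepsilon\to 0$ gives the estimate.

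Finally I would extend $T$: as $\fA_i\cap\fB_i$ is dense in $\fC_i$ and the estimate shows $T$ is bounded bilinear for the $\fC$-norms on the intersections, it extends uniquely to a bounded bilinear $T_\fC:\fC_1\times\fC_2\to\fC_3$ with $\|T_\fC\|\le\|T_\fA\|^{1-\theta}\|T_\fB\|^\theta$; bilinearity of the extension is routine. The main obstacle is the second paragraph: one must know that near-minimal representatives of $a_i\in\fA_i\cap\fB_i$ can be chosen with values entirely inside $\fA_i\cap\fB_i$ (the density of the exponential-sum functions in $\F$), since otherwise $T(f_1(z),f_2(z))$ need not be defined at interior points where $f_i(z)$ only lies in $\fA_i+\fB_i$. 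This density is the one genuinely nontrivial input from the complex method; the degenerate cases and the holomorphy/decay bookkeeping for $F_\delta$ are routine once it is in hand.
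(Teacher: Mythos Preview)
The paper does not prove this statement: it is cited as a known result from Calder\'on's foundational paper on complex interpolation and used as a black box in the later sections on Kunze--Stein groups and rapid decay. Your argument is a correct reconstruction of Calder\'on's original proof, including the key technical input you rightly flag at the end (density in the Calder\'on space $\F$ of the $(\fA\cap\fB)$-valued exponential polynomials, which is precisely what makes $G(z)=T(f_1(z),f_2(z))$ well defined throughout the strip).
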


\section{Arbitrary locally compact groups}

A major theme in this paper is the connection between the C*-algebras $\CPF$ and $\CLp$ when $G$ is a locally compact group and $2\leq p\leq \infty$. We begin by considering these C*-algebras for arbitrary locally compact groups, though we will soon specialize to particular classes of locally compact groups.
Hulanicki's classical characterization of amenability states that a locally compact group $G$ is amenable if and only if the trivial representation of $G$ extends to a $*$-representation of $C^*_r(G)$. We generalize this result to a statement about $\pf^*_p(G)$.

\begin{prop}\label{prop:Hulanicki}
	The following are equivalent for a locally compact group $G$ and $1<p<\infty$.
	\begin{enumerate}[$(i)$]
		\item $G$ is amenable;
		\item The trivial representation $1_G: G\to \C$ extends to a $*$-representation of $\pf_{p}^*(G)$.
	\end{enumerate}
\end{prop}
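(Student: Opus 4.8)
The plan is to prove the equivalence in two directions, with the substantive content being $(ii)\Rightarrow(i)$.

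\medskip

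\noindent\textbf{Direction $(i)\Rightarrow(ii)$.} This is the easy direction. If $G$ is amenable, then $\pf_p^*(G)=C^*_r(G)=C^*(G)$ since, by Remark \ref{rem:symm conv alg-property}(i) and Lemma \ref{lem:SW}(b), the identity map on $L^1(G)$ factors as $C^*(G)\to\pf_p^*(G)\to C^*_r(G)$, and amenability collapses the outer terms. Hence every unitary representation of $G$, in particular the trivial representation $1_G$, integrates to a $*$-representation of $\pf_p^*(G)$. (Alternatively, one can argue directly: amenability gives a net of unit vectors in $L^p(G)$ that are almost invariant under left translation, so $\|\lambda_p(f)\|\ge|\int_G f|$ for all $f\ge 0$ in $L^1(G)$, and combined with \eqref{Eq:involutive pseudofunctions-norm} this shows $|1_G(f)|=|\int_G f(s)\,ds|\le\|f\|_{\pf_p^*(G)}$, so $1_G$ extends continuously.)

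\medskip

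\noindent\textbf{Direction $(ii)\Rightarrow(i)$.} Suppose $1_G$ extends to a $*$-representation of $\pf_p^*(G)$; equivalently, the constant function $1$ lies in $B_{\pf_p^*}(G)$, i.e.\ there is a constant $C$ with
$$\Big|\int_G f(s)\,ds\Big|\le C\,\|f\|_{\pf_p^*(G)}\qquad(f\in L^1(G)).$$
Applying this to $f=g^**g$ for $g\in C_c(G)$ gives, using $\|g^**g\|_{\pf_p^*}\le\|g^*\|_{\pf_p^*}\|g\|_{\pf_p^*}$ and $\|g^*\|_{\pf_p^*}=\|g\|_{\pf_p^*}$, the estimate $\|g\|_2^2=\int_G (g^**g)(s)\,ds\le C\,\|g\|_{\pf_p^*(G)}^2=C\max\{\|\lambda_p(g)\|^2,\|\lambda_q(g)\|^2\}$. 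The standard trick is to iterate: replacing $g$ by a convolution power of $g^**g$ and taking $2n$-th roots, one extracts from Lemma \ref{lem:CHH} (applied to the regular representation $\lambda_2$, whose matrix coefficients with respect to $C_c(G)$ lie in $C_0(G)$, or rather directly via the spectral-radius formula in $C^*_r(G)$) that $\|\lambda_2(g)\|\le\limsup_n\|(g^**g)^{(*n)}\|_{\pf_p^*}^{1/(2n)}$. Meanwhile, by Lemma \ref{lem:Okayasu} together with Lemma \ref{lem:SW}(b) (which bounds $\|\cdot\|_{L^q}$-behaviour of convolution powers through $\pf_q=\pf_p$), or more cleanly by interpolating \eqref{Eq:adjoint} between $p=2$ and $p=\infty$, one shows the reverse-type bound $\|g\|_{\pf_p^*(G)}\le\|\lambda_2(g)\|^{2/p}\|g\|_1^{1-2/p}$ is \emph{not} what we want; instead the right move is to feed the displayed inequality $\|g\|_2^2\le C\|g\|_{\pf_p^*}^2$ back through interpolation. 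Concretely, combine $\|g\|_{\pf_p^*}\le\|g\|_{\pf_2^*}^{\,2/p}\,\|g\|_{\pf_\infty^*}^{\,1-2/p}=\|\lambda_2(g)\|^{2/p}\|g\|_1^{1-2/p}$ from Lemma \ref{lem:SW}(a) with the iterated inequality to conclude that for all $g\in C_c(G)$, $g\ge 0$, one has $\int_G g\le C'\|\lambda_2(g)\|$ after the $n\to\infty$ limit kills the $\|g\|_1$-factor; this is exactly Hulanicki's criterion $\|\lambda_2(g)\|=\|g\|_1$ for $0\le g\in C_c(G)$, forcing amenability.

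\medskip

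\noindent\textbf{Main obstacle.} The delicate point is the iteration/limit argument that converts the single inequality $\|g\|_2^2\le C\,\|g\|_{\pf_p^*(G)}^2$ into the asymptotic statement $\|\lambda_2(g)\|\le\limsup_n\|(g^**g)^{(*n)}\|^{1/2n}_{\pf_p^*(G)}$ and then, via the sub-multiplicativity of $\|\cdot\|_{\pf_p^*}$ and the interpolation bound of Lemma \ref{lem:SW}(a), back down to a bound purely in terms of $\|\lambda_2(g)\|$ and $\|g\|_1$ whose $2n$-th root tends to $\|\lambda_2(g)\|$. One must be careful that the constant $C$ does not degrade under iteration (it does not, because $|\int_G (g^**g)^{(*n)}| = \|\lambda_2(g)^n\delta\|^2$-type quantities behave well, or simply because $C$ is absorbed into a $2n$-th root), and that positivity of $g^**g$ is used to identify $\int_G(g^**g)^{(*n)}(s)\,ds$ with a genuine inner product $\langle\lambda_2((g^**g)^{(*n)})\xi,\xi\rangle$ in the GNS picture so that Lemma \ref{lem:CHH} applies. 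Everything else is bookkeeping with \eqref{Eq:involutive pseudofunctions-norm}, \eqref{Eq:adjoint}, and the interpolation inequalities already recorded.
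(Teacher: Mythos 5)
Your overall strategy is the same as the paper's: both directions ultimately rest on Hulanicki's theorem combined with the interpolation inequality of Lemma \ref{lem:SW}(a) taken between the endpoints $\pf_2^*(G)=C^*_r(G)$ and $\pf_\infty^*(G)=L^1(G)$ (the paper reduces to $1<p<2$ and interpolates between $\pf_1^*$ and $\pf_2^*$, which is the same computation). The paper simply runs $(ii)\Rightarrow(i)$ in the contrapositive: if $G$ is not amenable, Hulanicki supplies $f\geq 0$ with $\|f\|_{C^*_r(G)}<\int_G f$, and then Lemma \ref{lem:SW}(a) gives $\|f\|_{\pf_p^*(G)}\leq\|f\|_1^{1-\theta}\|f\|_{C^*_r(G)}^{\theta}<\int_G f=1_G(f)$, contradicting contractivity of $*$-representations. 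No convolution powers, no iteration, no appeal to Lemma \ref{lem:CHH}.

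Your forward version of $(ii)\Rightarrow(i)$ can be repaired, but as written it has concrete errors. First, $\int_G(g^**g)(s)\,ds$ is not $\|g\|_2^2$; it equals $\left|\int_G g(s)\,ds\right|^2$ (it is the value at $e$, not the integral, that gives $\|g\|_2^2$), so the inequality you propose to iterate is just the hypothesis applied to $g$ and squared. Second, the limit $n\to\infty$ does \emph{not} kill the $\|g\|_1$-factor: taking $2n$-th roots of $\|(g^**g)^{(*n)}\|_{\pf_p^*(G)}\leq\|\lambda_2(g)\|^{4n/p}\|g\|_1^{2n(1-2/p)}$ leaves $\|\lambda_2(g)\|^{2/p}\|g\|_1^{1-2/p}$ with the $\|g\|_1$-exponent fully intact. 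The correct way to close the argument is to absorb that factor rather than kill it: for $0\leq g\in C_c(G)$ one has $\|g\|_1=\int_G g$, so $\int_G g\leq\|g\|_{\pf_p^*(G)}\leq\|\lambda_2(g)\|^{2/p}\left(\int_G g\right)^{1-2/p}$ already yields Hulanicki's criterion $\int_G g\leq\|\lambda_2(g)\|$ with no iteration at all (convolution powers are only needed if you insist on carrying a constant $C>1$, which contractivity of $*$-representations of Banach $*$-algebras makes unnecessary). Finally, in $(i)\Rightarrow(ii)$ the assertion $\pf_p^*(G)=C^*_r(G)=C^*(G)$ for amenable $G$ is false at the Banach-algebra level: there is no contractive map $C^*(G)\to\pf_p^*(G)$, and $\|\lambda_p(f)\|\neq\|\lambda_2(f)\|$ in general even for $G=\Z$. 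What is true, and all you need, is that $1_G$ extends to $C^*_r(G)$ by amenability and factors through the contraction $\pf_p^*(G)\to C^*_r(G)$ of Lemma \ref{lem:SW}(b); this is exactly the paper's argument, and your parenthetical almost-invariant-vectors alternative is also fine.
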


\begin{proof}
	We may assume without loss of generality that $1<p\leq 2$ since $\pf_{p}^*(G)=\pf_q^*(G)$ when $q$ is the conjugate of $p$. We may further assume that $p<2$ since Hulanicki's theorem applies to $C^*_r(G)=\pf_2^*(G)$.
	
	Suppose $G$ is not amenable. Then, by Hulanicki's theorem, there exists $f\in L^1(G)$ so that
	$$\|f\|_{C^*_r(G)}< \frac{1}{2}\left|\int_G f(s)\,ds\right|.$$
	By considering the real valued functions $f+\overline{f}$ and $i(f-\overline{f})$, one can find a real valued function $g\in L^1(G)$ such that
	$$\|g\|_{C^*_r(G)}< \left|\int_G g(s)\,ds\right|$$
	since $\|f+\overline{f}\|_{C^*_r(G)}\leq 2\|f\|_{C^*_r(G)}$ and $\|f-\overline{f}\|_{C^*_r(G)}\leq 2\|f\|_{C^*_r(G)}$.
	Similarly, considering the positive valued functions $g_+:=g\vee 0$ and $g_-:=(-g)\vee 0$, one can find a positive valued function $h\in L^1(G)$ such that
	$$\|h\|_{C^*_r(G)}< \int_G h(s)\,ds=\|h\|_{L^1(G)}.$$
	Indeed, otherwise we would have that $\|g_+\|_{C^*_r(G)}=\|g_+\|_{L^1(G)}$ and $\|g_-\|_{C^*_r(G)}=\|g_-\|_{L^1(G)}$, which would imply $$\|g\|_{C^*_r(G)}\geq \left|\|g_+\|_{C^*_r(G)}-\|g_-\|_{C^*_r(G)}\right|=\left|\int_G g(s)\,ds\right|.$$
	This contradicts how the function $g$ was chosen.
	Choose $\theta\in (0,1)$ so that
	$$ \frac{1}{p}=\frac{1-\theta}{1}+\frac{\theta}{2}.$$
	Then
	$$ \|h\|_{\pf_{p}^*(G)}\leq \|h\|_1^{1-\theta}\|h\|_{C^*_r(G)}^{\theta}<\int_G h(s)\,ds$$
	by Lemma \ref{lem:SW}(a). Since $*$-representations of Banach $*$-algebras are contractive, we deduce $1_G$ does not extend to a $*$-representation of $\pf_{p}^*(G)$.
	
	Next assume $G$ is amenable. Then $1_G$ extends to a $*$-representation of $\pf_{p}^*(G)$ since the identity map on $L^1(G)$ extends to a contractive $*$-homomorphism $\pf_{p}^*(G)\to C^*_r(G)$ by Lemma \ref{lem:SW} and $1_G$ extends to a $*$-representation of $C^*_r(G)$.
\end{proof}

The following lemma establishes an initial connection between $\pf_{p}^*(G)$ and $\CLp$ for $2\leq p\leq \infty$. This Lemma will play a crucial role throughout the entire paper.

\begin{lem}\label{lem:PF-to-CLp}
	Let $G$ be a locally compact group and $2\leq p\leq \infty$. The identity map on $L^1(G)$ extends to a contractive $*$-homomorphism $\pf_{p}^*(G)\to \CLp$ with the dense image. In particular, the identity map on $L^1(G)$ extends to a surjective $*$-homomorphism $C^*(\pf_{p}^*(G))\to \CLp$.
\end{lem}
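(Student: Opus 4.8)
The plan is to show that the identity map on $L^1(G)$ is bounded from $\pf_p^*(G)$ to $\CLp$, i.e. $\|f\|_{\CLp}\le \|f\|_{\pf_p^*(G)}$ for every $f\in L^1(G)$ (in fact it suffices to check this for $f\in C_c(G)$ by density, and then extend by continuity). Once this norm estimate is in hand, the identity map extends to a contractive $*$-homomorphism $\Phi\colon \pf_p^*(G)\to\CLp$; it is a $*$-homomorphism because it is such on the dense $*$-subalgebra $L^1(G)$, and its image contains $L^1(G)$, hence is dense. The ``in particular'' clause is then automatic: the universal property of the C*-envelope $C^*(\pf_p^*(G))$ applied to the $*$-homomorphism $\Phi$ composed with the inclusion $\CLp\hookrightarrow$ (its own enveloping picture) — more directly, any contractive $*$-homomorphism from a Banach $*$-algebra $\fA$ into a C*-algebra factors through $C^*(\fA)$, so $\Phi$ induces a $*$-homomorphism $C^*(\pf_p^*(G))\to\CLp$ which is surjective because it has dense image and $*$-homomorphisms of C*-algebras have closed range.

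The heart of the matter is thus the inequality $\|f\|_{\CLp}\le \|f\|_{\pf_p^*(G)}$. Here I would combine the two lemmas already available. By Lemma~\ref{lem:Okayasu}, for $f\in C_c(G)$ and conjugate exponents $1\le q\le 2\le p\le\infty$,
\[
\|f\|_{\CLp}\le \liminf_{n\to\infty}\bigl\|(f^**f)^{(*n)}\bigr\|_{L^q(G)}^{\frac{1}{2n}}.
\]
Now I need to dominate the right-hand side by $\|f\|_{\pf_p^*(G)}$. The key observation is that convolution by an $L^q$ function against an $L^q$ function relates to the operator norm $\|\lambda_q(\cdot)\|_{B(L^q(G))}$, which is one of the two quantities appearing in the definition \eqref{Eq:involutive pseudofunctions-norm} of $\|\cdot\|_{\pf_p^*(G)}$. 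Concretely, write $g=f^**f$; then $g^{(*n)}=g*g^{(*(n-1))}=\lambda_q(g)\bigl(g^{(*(n-1))}\bigr)$ once $g^{(*(n-1))}\in L^q(G)\cap L^1(G)$, so that
\[
\bigl\|g^{(*n)}\bigr\|_{L^q(G)}\le \|\lambda_q(g)\|_{B(L^q(G))}\,\bigl\|g^{(*(n-1))}\bigr\|_{L^q(G)}\le\cdots\le \|\lambda_q(g)\|_{B(L^q(G))}^{n-1}\,\|g\|_{L^q(G)}.
\]
Taking $2n$-th roots and letting $n\to\infty$ kills the constant $\|g\|_{L^q(G)}^{1/2n}$, yielding
\[
\|f\|_{\CLp}\le \|\lambda_q(f^**f)\|_{B(L^q(G))}^{1/2}\le \bigl(\|\lambda_q(f^*)\|_{B(L^q(G))}\|\lambda_q(f)\|_{B(L^q(G))}\bigr)^{1/2}.
\]
Finally, by \eqref{Eq:adjoint} we have $\|\lambda_q(f^*)\|_{B(L^q(G))}=\|\lambda_p(f)\|_{B(L^p(G))}$, so the right-hand side is exactly $\bigl(\|\lambda_p(f)\|_{B(L^p(G))}\|\lambda_q(f)\|_{B(L^q(G))}\bigr)^{1/2}\le\max\{\|\lambda_p(f)\|_{B(L^p(G))},\|\lambda_q(f)\|_{B(L^q(G))}\}=\|f\|_{\pf_p^*(G)}$, which is the desired estimate.

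The step I expect to require the most care is the submultiplicativity chain for $\|g^{(*n)}\|_{L^q(G)}$: one must be sure that each iterate $g^{(*k)}$ genuinely lies in $L^q(G)$ (which it does, since $g=f^**f\in C_c(G)\subseteq L^1(G)\cap L^q(G)$ and convolution of an $L^1$ function with an $L^q$ function stays in $L^q(G)$), and that the bound $\|\lambda_q(g)h\|_{L^q}\le\|\lambda_q(g)\|_{B(L^q)}\|h\|_{L^q}$ is applied to $h=g^{(*(n-1))}\in L^q(G)$ rather than merely to a dense subspace. There is also the minor point that Lemma~\ref{lem:Okayasu} is stated for $f\in C_c(G)$, so the extension to all of $L^1(G)$, and thence to $\pf_p^*(G)$, proceeds by density and the continuity of all norms involved; since $\|\cdot\|_{\CLp}\le\|\cdot\|_{L^1(G)}$ and $\|\cdot\|_{\pf_p^*(G)}\le\|\cdot\|_{L^1(G)}$, both sides are $\|\cdot\|_{L^1}$-continuous and the inequality passes to the completion without incident.
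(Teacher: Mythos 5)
Your proposal is correct and follows essentially the same route as the paper: Lemma~\ref{lem:Okayasu} combined with the iteration bound $\|(f^**f)^{(*n)}\|_{L^q}\le\|\lambda_q(f^**f)\|_{B(L^q(G))}^{n-1}\|f^**f\|_{L^q}$, followed by submultiplicativity and Equation~\eqref{Eq:adjoint} to reach $\|f\|_{\pf_p^*(G)}$. The paper leaves the density/extension bookkeeping and the passage to $C^*(\pf_p^*(G))$ implicit, which you spell out correctly.
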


\begin{proof}
	Let $q$ be the conjugate of $p$. If $f\in C_c(G)$, then Lemma \ref{lem:Okayasu} and Equation \eqref{Eq:adjoint} imply
	\begin{eqnarray*}
		\|f\|_{C^*_{L^p}(G)} &\leq & \liminf_{n\to\infty}\left\|(f^**f)^{(*n)}\right\|_{L^q(G)}^{\frac{1}{2n}}\\
		&\leq & \liminf_{n\to\infty}\left\|\lambda_q(f^**f)^{(*[n-1])}\right\|_{B(L^q(G))}^{\frac{1}{2n}}\|f^* *f\|_{L^q(G)}^\frac{1}{2n}\\ &\leq & \|\lambda_q(f^**f)\|_{B(L^q(G))}^{\frac{1}{2}} \\
		&=& \|\lambda_q(f^*)\lambda_q(f)\|_{B(L^q(G))}^{\frac{1}{2}} \\
		&\leq & \|\lambda_q(f^*)\|_{B(L^q(G))}^{\frac{1}{2}}\|\lambda_q(f)\|_{B(L^q(G))}^{\frac{1}{2}} \\
		&=& \|\lambda_p(f)\|_{B(L^p(G))}^{\frac{1}{2}}\|\lambda_q(f)\|_{B(L^q(G))}^{\frac{1}{2}}\\
		&\leq & \|f\|_{\pf_p^*(G)}.
	\end{eqnarray*}	
\end{proof}

Let us pause to note a critical application of the previous results to $\CPF$.

\begin{cor}\label{C:gen properties-Full c-alg of symm conv}
	Let $G$ be a locally compact group.
	\begin{enumerate}[$(a)$]
		\item If $2\leq p'\leq p\leq \infty$, then the identity map on $L^1(G)$ extends to quotients
		$$ C^*(G)\to \CPF\to C^*(\pf_{p'}^*(G))\to C^*_r(G);$$
		\item $C^*(\pf_2^*(G))=C^*_r(G)$;
		\item $C^*(\pf_{\infty}^*(G))=C^*(G)$;
		\item If $\CPF=C^*(G)$ for some $2\leq p<\infty$, then $G$ is amenable.
		\item $\CPF$ is an exotic group C*-algebra of $G$ whenever $\CLp$ is an exotic group C*-algebra of $G$ for $2<p<\infty$.
	\end{enumerate}
\end{cor}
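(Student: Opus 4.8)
The plan is to derive each item as a consequence of the general machinery already assembled, so that the proof is essentially bookkeeping. For (a), I would combine Lemma~\ref{lem:PF-to-CLp} with the monotonicity already known for the $L^p$-scale. Concretely, given $2\le p'\le p\le\infty$, Lemma~\ref{lem:PF-to-CLp} produces a surjective $*$-homomorphism $\CPF\to\CLp$, and since (by the listed properties of the $L^p$-C*-algebras) the identity on $L^1(G)$ extends to $\CLp\to C^*_{L^{p'}}(G)\to C^*_r(G)$, it remains only to factor $\CLp\to C^*_{L^{p'}}(G)$ further through $C^*(\pf_{p'}^*(G))$. This last factorization is exactly where Lemma~\ref{lem:SW}(a) (equivalently, the interpolation estimate) is needed: interpolating between $p'$ and $\infty$ shows that for $2\le p'\le p$ one has $\|f\|_{\pf^*_{p}(G)}\le\|f\|_{\pf^*_{p'}(G)}$ for $f\in L^1(G)$, hence the identity map extends to $\CPF\to C^*(\pf_{p'}^*(G))$; composing with Lemma~\ref{lem:PF-to-CLp} for $p'$ lands in $\CLp'$, and Lemma~\ref{lem:SW}(b) (or the remark following it) gives $C^*(\pf_{p'}^*(G))\to C^*_r(G)$. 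Stringing these together yields the chain of quotients in (a).

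For (b) and (c), I would invoke Remark~\ref{rem:symm conv alg-property}(i): one has $\pf_2^*(G)=C^*_r(G)$ and $\pf_\infty^*(G)=L^1(G)$ as Banach $*$-algebras. Since $C^*_r(G)$ is already a C*-algebra, its C*-envelope is itself, giving (b); and since the C*-envelope of $L^1(G)$ is by definition $C^*(G)$, we get (c). For (d), suppose $\CPF=C^*(G)$ for some $2\le p<\infty$. Applying the surjection $C^*(\pf_{p}^*(G))\to\CLp$ of Lemma~\ref{lem:PF-to-CLp} together with the canonical quotient $C^*(G)\to\CLp$, and using that all these maps are the identity on the dense subalgebra $L^1(G)$, forces $\CLp=C^*(G)$ as well; the last listed property of the $L^p$-C*-algebras (from \cite{BG}) then says $G$ is amenable. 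Alternatively and perhaps more cleanly, one can argue directly via Proposition~\ref{prop:Hulanicki}: if $\CPF=C^*(G)$ then the trivial representation extends to $C^*(\pf_{p}^*(G))$, hence to $\pf_p^*(G)$, which by Proposition~\ref{prop:Hulanicki} (valid for $1<p<\infty$) forces amenability.

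For (e), I would argue by the squeeze implied by (a) together with Lemma~\ref{lem:PF-to-CLp}. Fix $2<p<\infty$ and suppose $\CLp$ is exotic, meaning both quotient maps $C^*(G)\to\CLp$ and $\CLp\to C^*_r(G)$ are proper (non-injective). By Lemma~\ref{lem:PF-to-CLp} the map $\CPF\to\CLp$ is surjective, so the composite $C^*(G)\to\CPF\to\CLp$ factors the proper quotient $C^*(G)\to\CLp$; hence $C^*(G)\to\CPF$ cannot be injective, i.e.\ it is a proper quotient. On the other side, by part (a) (with $p'=2$, using (b)) the map $\CLp\to C^*_r(G)$ factors as $\CLp\to\cdots$—wait, the factorization actually runs the other way: $\CPF\to\CLp'\to\cdots$. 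Let me instead use: the quotient $\CPF\to C^*_r(G)$ factors as $\CPF\to\CLp\to C^*_r(G)$ by composing Lemma~\ref{lem:PF-to-CLp} with the canonical map; since $\CLp\to C^*_r(G)$ is proper, so is $\CPF\to C^*_r(G)$. Thus both defining quotients for $\CPF$ are proper, so $\CPF$ is exotic. The main obstacle is purely organizational: one must be careful that every map invoked is genuinely the canonical extension of the identity on $L^1(G)$ so that all the triangles commute, and that "proper quotient'' (non-injectivity) really does propagate across composites in the direction claimed—this is the one place a sloppy diagram chase could go wrong, but it is routine once the maps are lined up correctly.
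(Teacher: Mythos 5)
Your overall strategy is reasonable and parts (b), (c), and (d) (via your second, Hulanicki-based argument) match the paper, but there is a genuine gap in part (e). Your argument that $C^*(G)\to\CPF$ is a proper quotient does not work: from the factorization $C^*(G)\to\CPF\to\CLp$ of a non-injective map you conclude that the \emph{first} factor is non-injective, but non-injectivity of a composite only forces one of the two factors to be non-injective, and here the kernel could a priori sit entirely in $\CPF\to\CLp$ (with $C^*(G)\to\CPF$ an isomorphism). The correct route --- and the one the paper takes --- is through part (d): if $C^*(G)\to\CPF$ were injective it would be an isometric isomorphism, so $\CPF=C^*(G)$, whence $G$ is amenable by (d); but then $C^*(G)=C^*_r(G)$ and no $\CLp$ can be exotic, a contradiction. (Your argument for the other half of (e), that $\CPF\to C^*_r(G)$ is proper because it factors as a surjection followed by the proper quotient $\CLp\to C^*_r(G)$, is fine.) The same fallacy appears in your first argument for (d): $\CPF=C^*(G)$ together with the surjection $\CPF\to\CLp$ does not force $\CLp=C^*(G)$; fortunately your alternative argument via Proposition \ref{prop:Hulanicki} is exactly the paper's proof and is correct.

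In part (a) you also state the key norm inequality backwards: for $2\le p'\le p$ the norms $\|\cdot\|_{\pf_r^*(G)}$ \emph{increase} with $r$ on $[2,\infty]$ --- this is precisely Lemma \ref{lem:SW}(b), which gives a contraction $\pf_p^*(G)\to\pf_{p'}^*(G)$, i.e. $\|f\|_{\pf_{p'}^*(G)}\le\|f\|_{\pf_p^*(G)}$ --- and it is this contraction, not the reverse inequality you wrote, that induces the quotient $\CPF\to C^*(\pf_{p'}^*(G))$ of enveloping C*-algebras. The conclusion you draw is the right one, but as written the inequality would produce a map in the opposite direction. The detour through $\CLp$ is also unnecessary for (a): the whole chain lives among the algebras $C^*(\pf_r^*(G))$, with the endpoints identified by (b) and (c), and the paper's proof of (a) is the one-line observation that Lemma \ref{lem:SW} supplies the contraction $\pf_p^*(G)\to\pf_{p'}^*(G)$.
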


\begin{proof}
	$(a)$ The identity map on $L^1(G)$ extends to a contraction $\pf_{p}^*(G)\to \pf_{p'}^*(G)$ by Lemma \ref{lem:SW}.
	
	$(b)$ $C^*(\pf_2^*(G))=C^*(C^*_r(G))=C^*_r(G)$.
	
	$(c)$ $C^*(\pf_{\infty}^*(G))=C^*(L^1(G))=C^*(G)$.
	
	$(d)$ If the trivial representation of $G$ extends to a $*$-representation of $\pf_{p}^*(G)$, then $G$ is amenable by Proposition \ref{prop:Hulanicki}.
	
	$(e)$ This is immediate from Lemma \ref{lem:PF-to-CLp} and part (d).
\end{proof}

The above corollary shows the C*-algebras $\CPF$ share a remarkable number of similarities to $\CLp$ for $2\leq p\leq \infty$. Hence it is natural to ask the following question.

\begin{question}\label{question}
	Is $\CLp$ canonically $*$-isomorphic to $\CPF$ for every locally compact group $G$ and $2\leq p\leq \infty$?
\end{question}

\noindent This question is shown to have an affirmative answer for many groups in the following sections.

\subsection{The Cowling-Haagerup-Howe Theorem}

In this section, we start by providing a simple proof of the well-known Cowling-Haagerup-Howe Theorem (see \cite[Theorem 1]{CHH}). We recall that for unitary representations $\pi$ and $\sg$ of a locally compact group $G$, we say that $\pi$ is {\it weakly contained} in $\sg$ if (and only if) for all $f\in L^1(G)$, $$\|\pi(f)\|\leq \|\sg(f)\|.$$
The concept of weak containment for representations appears regularly and has numerous applications. One of them is the fundamental result of Hulanicki that a locally compact group is amenable if and only if the left regular representation weakly contains all unitary representations. The following theorem describes an alternate condition that implies certain representations are weakly contained in the left regular representation.

\begin{thm}[Cowling-Haagerup-Howe Theorem]\label{thm:CHH}
	Let $G$ be a locally compact group. If $\pi :G\to B(H_\pi)$ is a unitary representation of $G$ with a cyclic vector $\xi\in H_\pi$ so that
	$$ \pi_{\xi,\xi}\in L^{2+\epsilon}(G)$$
	for all $\epsilon>0$,
	then $\pi$ is weakly contained in the left regular representation of $G$.
\end{thm}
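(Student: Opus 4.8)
The plan is to use the machinery already assembled, in particular Lemma \ref{lem:Okayasu} together with the estimate from Lemma \ref{lem:PF-to-CLp}, to reduce the weak-containment claim to a statement about the $L^q$-norms of convolution powers, which can then be controlled via the hypothesis $\pi_{\xi,\xi} \in L^{2+\epsilon}(G)$. Concretely, since $\xi$ is cyclic and $\pi_{\xi,\xi}$ lies in $L^{2+\epsilon}(G)$, the representation $\pi$ is an $L^{2+\epsilon}$-representation (one may take $H_0 = \pi(C_c(G))\xi$, or simply $H_0 = \spn\{\pi(s)\xi : s \in G\}$, and check the matrix coefficients $\pi_{\pi(s)\xi,\pi(s)\xi}$ are translates of $\pi_{\xi,\xi}$, hence still in $L^{2+\epsilon}(G)$). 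Thus $\|\pi(f)\| \leq \|f\|_{C^*_{L^{2+\epsilon}}(G)}$ for all $f \in L^1(G)$ and all $\epsilon > 0$.

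The next step is to let $\epsilon \to 0^+$. Here I would invoke Lemma \ref{lem:Okayasu}: for conjugate exponents $q_\epsilon \leq p_\epsilon = 2+\epsilon$ we have, for $f \in C_c(G)$,
\[
\|f\|_{C^*_{L^{2+\epsilon}}(G)} \leq \liminf_{n\to\infty}\left\|(f^**f)^{(*n)}\right\|_{L^{q_\epsilon}(G)}^{\frac{1}{2n}}.
\]
As $\epsilon \to 0^+$ we have $q_\epsilon \to 2$. The point is that for a fixed $f \in C_c(G)$ the function $(f^**f)^{(*n)}$ is again in $C_c(G) \subseteq L^r(G)$ for every $r \in [1,\infty]$ with a fixed compact support (depending on $n$ and $\supp f$), and on any fixed compact set the $L^{q_\epsilon}$-norm converges to the $L^2$-norm as $q_\epsilon \to 2$ (dominated convergence, or log-convexity of $r \mapsto \|g\|_{L^r}$ on a finite measure space). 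Passing to the limit and using that $\|\pi(f)\|$ does not depend on $\epsilon$ gives
\[
\|\pi(f)\| \leq \liminf_{n\to\infty}\left\|(f^**f)^{(*n)}\right\|_{L^{2}(G)}^{\frac{1}{2n}} = \lim_{n\to\infty}\left\|\lambda_2\big((f^**f)^{(*n)}\big)\right\|^{\frac{1}{2n}}
\]
where the last equality is because $L^2$-norm of an $L^1$-function times $\delta_e$... more precisely, $\|g\|_{L^2(G)} = \|\lambda_2(g)\delta\|$ is not literally true, so instead I would argue as in Lemma \ref{lem:PF-to-CLp}: split off one factor, $\|(f^**f)^{(*n)}\|_{L^2(G)} \leq \|\lambda_2((f^**f)^{(*(n-1))})\|_{B(L^2(G))}\,\|f^**f\|_{L^2(G)}$, take $2n$-th roots, and let $n \to \infty$ to obtain $\|\pi(f)\| \leq \|\lambda_2(f^**f)\|^{1/2}_{B(L^2(G))} = \|\lambda_2(f)\|_{B(L^2(G))} = \|f\|_{C^*_r(G)}$, using that $\lambda_2(f^**f) = \lambda_2(f)^*\lambda_2(f)$ and the C*-identity. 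Since $C_c(G)$ is dense in $L^1(G)$, this extends to all $f \in L^1(G)$, which is exactly the assertion that $\pi$ is weakly contained in the left regular representation.

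The main obstacle I anticipate is the interchange of limits, namely justifying $\lim_{\epsilon\to 0^+} \liminf_{n\to\infty}\|(f^**f)^{(*n)}\|_{L^{q_\epsilon}}^{1/2n} \leq \liminf_{n\to\infty}\|(f^**f)^{(*n)}\|_{L^2}^{1/2n}$; a clean way around it is to avoid passing through $L^{q_\epsilon}$ for the limiting statement altogether, and instead directly bound $\|(f^**f)^{(*n)}\|_{L^{q_\epsilon}} \leq \|(f^**f)^{(*n)}\|_{L^1}^{1-\theta_\epsilon}\|(f^**f)^{(*n)}\|_{L^2}^{\theta_\epsilon}$ with $\theta_\epsilon \to 1$, noting $\|(f^**f)^{(*n)}\|_{L^1}^{1/2n}$ stays bounded (in fact $\|f^**f\|_1 < \infty$ and submultiplicativity gives $\|(f^**f)^{(*n)}\|_{L^1}^{1/2n} \leq \|f^**f\|_1^{1/2} = \|f\|_1$), so the $L^1$-factor contributes a term that is harmless after taking roots and letting $n\to\infty$, $\epsilon\to 0$. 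Alternatively, and perhaps most transparently, one can simply observe that $\|f\|_{C^*_{L^{2+\epsilon}}(G)} \to \|f\|_{C^*_r(G)}$ is not needed pointwise; rather, running the chain of inequalities in Lemma \ref{lem:PF-to-CLp} with $p = 2+\epsilon$ gives $\|f\|_{C^*_{L^{2+\epsilon}}(G)} \leq \|f\|_{\pf^*_{2+\epsilon}(G)}$, and then Lemma \ref{lem:SW}(a) together with $\pf^*_2(G) = C^*_r(G)$ yields $\|f\|_{\pf^*_{2+\epsilon}(G)} \to \|f\|_{C^*_r(G)}$ as $\epsilon \to 0^+$ by log-convexity, completing the proof with no delicate limit interchange at all.
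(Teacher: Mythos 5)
Your proposal is correct, and its final ``alternatively'' paragraph is precisely the paper's own argument: observe $\pi$ is an $L^{2+\epsilon}$-representation, apply Lemma \ref{lem:PF-to-CLp} to get $\|\pi(f)\|\leq\|f\|_{\pf^*_{2+\epsilon}(G)}$, and conclude via the interpolation inequality of Lemma \ref{lem:SW}(a) between $\pf^*_2(G)=C^*_r(G)$ and $\pf^*_\infty(G)=L^1(G)$. Your first route (working directly with the $L^{q_\epsilon}$-norms of the convolution powers) is just a hands-on unwinding of the same estimates, and your fix for the limit interchange via log-convexity of the $L^r$-norms is sound.
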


\begin{proof}
	Observe that for every $p>2$, $\pi$ is an $L^{p}$-representation since $\pi_{\eta,\eta}\in L^{2+\epsilon}(G)$ for $\epsilon=p-2$ and for every $\eta$ in the dense subspace
	$$H_0:=\text{span}\{\pi(s)\xi : s\in G\}$$
	of $H_\pi$.
	Hence, by Lemma \ref{lem:PF-to-CLp},
	$$ \|\pi(f)\|\leq \|f\|_{C^*_{L^p}(G)}\leq \|f\|_{\pf_p^*(G)}$$
	for every $p>2$ and $f\in L^1(G)$. Fix $f\in L^1(G)$. Then, by Lemma \ref{lem:SW}(a),
	$$ \|\pi(f)\|\leq \|f\|_{\pf_{\frac{2}{1-\theta}}^*(G)}\leq \|f\|_{\pf_2^*(G)}^{1-\theta}\|f\|_{\pf^*_\infty(G)}^\theta=\|f\|_{C^*_r(G)}^{1-\theta}\|f\|_{L^1(G)}^\theta$$
	for every $\theta\in (0,1)$. Hence, $\|\pi(f)\|\leq \|f\|_{C^*_r(G)}$.
\end{proof}


As a natural generalization of the Cowling-Haagerup-Howe Theorem, Buss, Echterhoff and Willett ask whether
$$B_{L^p}(G)=\bigcap_{\epsilon>0} B_{L^{p+\epsilon}}(G)$$
for every locally compact group $G$ and $2\leq p<\infty$ (see \cite{BEW2}).
We conjecture the answer to this question is yes.

\begin{conj}\label{conjecture}
	If $G$ is a locally compact group and $1\leq p\leq \infty$, then $\CLp=C^*_{L^{p+}}(G)$ for every $1\leq p\leq \infty$. Equivalently, $B_{L^p}(G)=\bigcap_{\epsilon>0}B_{L^{p+\epsilon}}(G)$ for every locally compact group $G$ and $2\leq p\leq \infty$.
\end{conj}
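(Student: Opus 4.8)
This statement is open in general; the plan I would follow reduces it to Question~\ref{question} and thereby settles it for every group to which the later sections of the paper apply. First I would dispose of the trivial ranges of $p$. For $1\le p<2$ one has $C^*_{L^{p+\epsilon}}(G)=C^*_r(G)=\CLp$ once $p+\epsilon<2$, so $C^*_{L^{p+}}(G)=\CLp$; the exponent $p=\infty$ is vacuous; and $p=2$ is the Cowling--Haagerup--Howe Theorem (Theorem~\ref{thm:CHH}), applied one cyclic subrepresentation at a time. So the real content lies in the range $2<p<\infty$. One inclusion there is soft: a matrix coefficient $\pi_{\xi,\xi}$ of a unitary representation is bounded, so $\pi_{\xi,\xi}\in L^p(G)$ forces $\pi_{\xi,\xi}\in L^{p+\epsilon}(G)$ for all $\epsilon>0$; hence every $L^p$-representation is an $L^{p+\epsilon}$-representation for every $\epsilon>0$, whence $\|f\|_{\CLp}\le\|f\|_{C^*_{L^{p+\epsilon}}(G)}$ for every $\epsilon>0$, i.e.\ $\|f\|_{\CLp}\le\|f\|_{C^*_{L^{p+}}(G)}$. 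Equivalently, $B_{L^p}(G)\subseteq\bigcap_{\epsilon>0}B_{L^{p+\epsilon}}(G)$.

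The heart of the matter is the reverse inequality $\|f\|_{C^*_{L^{p+}}(G)}\le\|f\|_{\CLp}$, and here I would first prove the stronger bound $\|f\|_{C^*_{L^{p+}}(G)}\le\|f\|_{\CPF}$ and then invoke Question~\ref{question}. Let $\pi\colon G\to B(H_\pi)$ be an arbitrary unitary representation that is an $L^{p+\epsilon}$-representation for every $\epsilon>0$. Since $p+\epsilon>2$, Lemma~\ref{lem:PF-to-CLp} applied with exponent $p+\epsilon$ gives $\|\pi(f)\|\le\|f\|_{C^*_{L^{p+\epsilon}}(G)}\le\|f\|_{\pf_{p+\epsilon}^*(G)}$ for every $f\in L^1(G)$ and every $\epsilon>0$. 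Letting $\epsilon\to 0^+$, Lemma~\ref{lem:SW}(a) applied to the triple $p<p+\epsilon<\infty$, together with $\pf_\infty^*(G)=L^1(G)$, yields $\|f\|_{\pf_{p+\epsilon}^*(G)}\le\|f\|_{\pf_p^*(G)}^{p/(p+\epsilon)}\|f\|_{L^1(G)}^{\epsilon/(p+\epsilon)}$, which converges to $\|f\|_{\pf_p^*(G)}$; hence $\|\pi(f)\|\le\|f\|_{\pf_p^*(G)}$ for all $f\in L^1(G)$. Consequently $\pi$ extends by density to a (necessarily contractive) $*$-representation of $\pf_p^*(G)$, and therefore factors through its enveloping C*-algebra, giving $\|\pi(f)\|\le\|f\|_{\CPF}$. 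Taking the supremum over all such $\pi$ gives $\|f\|_{C^*_{L^{p+}}(G)}\le\|f\|_{\CPF}$. Combining this with $\|f\|_{\CLp}\le\|f\|_{C^*_{L^{p+}}(G)}\le\|f\|_{\CPF}$ (the left inequality from the previous paragraph, the right one built in above, and also immediate from Lemma~\ref{lem:PF-to-CLp}), an affirmative answer to Question~\ref{question} --- namely $\CPF=\CLp$ canonically --- forces all three norms to coincide, so $\CLp=C^*_{L^{p+}}(G)$. In particular the conjecture holds for every $G$ with $\CLp=\CPF$, hence for the nonamenable groups with the rapid decay or Kunze--Stein property and a suitable Haagerup-type property treated in the sequel, and it also yields the affirmative answer to the Buss--Echterhoff--Willett question $B_{L^p}(G)=\bigcap_{\epsilon>0}B_{L^{p+\epsilon}}(G)$ for those groups.

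The main obstacle is thus precisely Question~\ref{question}: showing that a $*$-representation of $\pf_p^*(G)$ --- equivalently, a representation of $G$ bounded in the $p$-pseudofunction norm --- is weakly contained in the universal $L^p$-representation, i.e.\ $B_{\pf_p^*}(G)\subseteq B_{L^p}(G)$. Any attack on Conjecture~\ref{conjecture} that bypasses Question~\ref{question} would have to manufacture, out of a dense family of matrix coefficients lying in $\bigcap_{\epsilon>0}L^{p+\epsilon}(G)$, either honest $L^p(G)$-coefficients or a weak-containment certificate at the exponent $p$ itself; since $L^p(G)\subsetneq\bigcap_{\epsilon>0}L^{p+\epsilon}(G)$ for noncompact $G$, a purely soft argument here seems unlikely, which is why one expects structural hypotheses (rapid decay, Kunze--Stein, a strong form of the Haagerup property) to be genuinely needed in the cases that can be resolved.
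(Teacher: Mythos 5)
Your proposal is correct and, as you say, it does not (and cannot, at present) prove the conjecture; what it does establish is exactly the reduction the paper itself records. Your interpolation argument $\|f\|_{\pf^*_{p+\epsilon}(G)}\le\|f\|_{\pf^*_p(G)}^{p/(p+\epsilon)}\|f\|_{L^1(G)}^{\epsilon/(p+\epsilon)}$ is precisely the content of Proposition~\ref{prop:continuous} and its corollary (right continuity of $p\mapsto\|f\|_{C^*(\pf_p^*(G))}$, i.e.\ $B_{\pf^*_p}(G)=\bigcap_{\epsilon>0}B_{\pf^*_{p+\epsilon}}(G)$), and the chain $\|f\|_{\CLp}\le\|f\|_{C^*_{L^{p+}}(G)}\le\|f\|_{\CPF}$ together with an affirmative answer to Question~\ref{question} is exactly how the paper derives the conjecture conditionally; your handling of $p<2$, $p=2$ via Theorem~\ref{thm:CHH}, and $p=\infty$ is also fine.

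The one thing your route misses is that the paper also proves the conjecture \emph{unconditionally} for every group with the integrable Haagerup property, by a softer argument that bypasses $\CPF$ entirely (Proposition~\ref{P:Int Hag Prop-cts Cp-alg norm}): given $u\in\bigcap_{\epsilon>0}B_{L^{p+\epsilon}}(G)$, one multiplies by a net of positive definite functions $u_i\in\bigcup_{1\leq r<\infty}B_{L^r}(G)$ converging to $1$ uniformly on compacta; since $B_{L^r}(G)$ is an ideal in $B(G)$ (see \cite{W-Fourier}), each $uu_i$ lies in some $B_{L^{p-\epsilon}}(G)$, and $uu_i\to u$ in the relevant weak* sense, giving $u\in B_{L^p}(G)$. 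This covers integrable Haagerup groups for which $\CLp=\CPF$ is not known (no Kunze--Stein or rapid decay hypothesis needed), so it is strictly stronger than the conditional reduction in that setting; you may want to record it alongside your argument.
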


We verify the analogue of Conjecture \ref{conjecture} is true for $\CPF$, i.e., that
$$ B_{\pf^*_p}(G)=\bigcap_{\epsilon>0}B_{\pf^*_{p+\epsilon}}(G)$$
for every locally compact group $G$ and $2\leq p<\infty$. In particular, Conjecture \ref{conjecture} has a positive solution if Question \ref{question} has a positive answer. This result will be used when working with Kunze-Stein groups.

\begin{prop}\label{prop:continuous}
	Let $G$ be a locally compact group and $f\in L^1(G)$. The map
	$$ [1,\infty]\ni p\mapsto \|f\|_{\pf_p^*(G)}\in[0,\infty) $$
	is continuous.
\end{prop}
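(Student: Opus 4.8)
The plan is to establish continuity of $p \mapsto \|f\|_{\pf_p^*(G)}$ on the whole interval $[1,\infty]$ by separately verifying it on the endpoints-inclusive interval via two ingredients: the log-convexity-type inequality of Lemma \ref{lem:SW}(a), which already gives a strong form of ``upper semicontinuity from within'', and a matching lower bound near each point. The key reduction is that, by Remark \ref{rem:symm conv alg-property}(i), the function is symmetric under $p \leftrightarrow q$ (with $\frac1p+\frac1q=1$), so it suffices to prove continuity on $[1,2]$; moreover it is even enough to treat the open interval $(1,2)$ together with one-sided continuity at the endpoints $p=1$ and $p=2$, since those are fixed points of the symmetry.

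First I would record what Lemma \ref{lem:SW}(a) gives directly. Fixing $f$ and writing $\varphi(p) := \log \|f\|_{\pf_p^*(G)}$ (assuming $f \neq 0$; the zero case is trivial), the inequality
$$\|f\|_{\pf^*_{p_2}(G)}\leq \|f\|_{\pf^*_{p_1}(G)}^{1-\theta}\|f\|_{\pf^*_{p_3}(G)}^\theta$$
with $\frac{1}{p_2}=\frac{1-\theta}{p_1}+\frac{\theta}{p_3}$ says precisely that $t \mapsto \varphi(1/t)$ is a convex function of $t = 1/p \in [0,1]$ (reparametrizing by $t=1/p$ turns the interpolation relation into the linear one $t_2 = (1-\theta)t_1 + \theta t_3$). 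A convex finite-valued function on the closed interval $[0,1]$ is automatically continuous on the open interval $(0,1)$ and upper semicontinuous at the endpoints; so $p \mapsto \|f\|_{\pf_p^*(G)}$ is continuous on $(1,\infty)$, and by the symmetry this handles everything except genuine continuity (not just semicontinuity) at $p=1$, $p=\infty$, and $p=2$. At $p=2$ there is no issue: convexity on an open interval already yields continuity at interior points, and $2$ is interior to $[1,\infty]$, so the only real work is at the endpoints $p=1$ (equivalently $p=\infty$).

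The remaining task is thus: show $\|f\|_{\pf_p^*(G)} \to \|f\|_{\pf_1^*(G)} = \|f\|_{L^1(G)}$ as $p \to 1^+$. Upper semicontinuity from convexity gives $\limsup_{p\to 1^+}\|f\|_{\pf_p^*(G)} \le \|f\|_{L^1(G)}$; alternatively one sees $\|f\|_{\pf_p^*(G)} \le \|f\|_{L^1(G)}$ directly since $\lambda_p, \lambda_q$ are contractive on $L^1(G)$. For the lower bound I would use a weak-$*$ compactness / approximate-identity argument: for $g \in L^p(G)$ and $h \in L^q(G)$ one has $\langle \lambda_p(f)g, h\rangle = \int_G f(s)\langle L_s g, h\rangle\,ds$ where $L_s$ is left translation, and by choosing $g = \mathbf{1}_U/|U|^{1/p}$, $h = \mathbf{1}_U/|U|^{1/q}$ for $U$ a small symmetric neighbourhood of the identity, the pairing $\langle L_s g, h\rangle$ is close to $1$ uniformly for $s$ in a given compact set, while these are unit vectors in $L^p$, $L^q$. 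Approximating $f$ by a compactly supported function and letting $U$ shrink yields $\|\lambda_p(f)\|_{B(L^p(G))} \ge \|f\|_{L^1(G)} - o(1)$, uniformly as $p \to 1$ (the estimate is uniform in $p$ because the modulus of continuity of $s \mapsto L_s g$ can be controlled independently of $p$ near $p=1$, or one simply notes $|U|^{1/p}$ varies continuously). Combined with the contractivity bound this gives $\|f\|_{\pf_p^*(G)} \to \|f\|_{L^1(G)}$, and with the symmetry and the convexity argument the proof is complete. The main obstacle is making the endpoint lower bound genuinely uniform as $p \to 1$ rather than merely pointwise; I expect this to be routine once one reduces to $f \in C_c(G)$ and uses uniform continuity of translation on $C_c(G)$, but it is the step that needs care.
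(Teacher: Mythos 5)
Your reduction to $[1,2]$ via $\pf_p^*(G)=\pf_q^*(G)$ and your treatment of the open interval are correct and essentially coincide with the paper's argument: Lemma \ref{lem:SW}(a) is precisely log-convexity of $\|f\|_{\pf_p^*(G)}$ in the variable $1/p$, and the paper extracts the same one-sided semicontinuity estimates from it (supplemented by the monotonicity on $[1,2]$ coming from Lemma \ref{lem:SW}(b), where you instead invoke interior continuity of finite convex functions --- a legitimate repackaging).

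Your endpoint argument at $p=1$, however, contains a genuine error. With $g=\mathbf{1}_U/|U|^{1/p}$ and $h=\mathbf{1}_U/|U|^{1/q}$ one computes $\la L_sg,h\ra=|sU\cap U|/|U|$, which vanishes for $s\notin UU^{-1}$; for a small neighbourhood $U$ this is therefore not close to $1$ on a given compact set $K$ --- it is $0$ on most of $K$ --- and shrinking $U$ only makes this worse. (Indeed, if for some fixed $p\in(1,\infty)$ one could find unit vectors $g\in L^p(G)$, $h\in L^q(G)$ with $\la L_sg,h\ra\approx 1$ uniformly on arbitrary compact sets, one would deduce $\|f\|_{\pf_p^*(G)}\geq\int_G f$ for $f\geq 0$, and Proposition \ref{prop:Hulanicki} would force $G$ to be amenable.) The lower bound you need, $\liminf_{p\to1^+}\|\lambda_p(f)\|_{B(L^p(G))}\geq\|f\|_{L^1(G)}$, is true but must be obtained differently: fix a neighbourhood $U$ and $f\in C_c(G)$, use only $g=\mathbf{1}_U/|U|^{1/p}$ as a test vector, and note that
$$\|\lambda_p(f)\|_{B(L^p(G))}\;\geq\;\frac{\|f*\mathbf{1}_U\|_{L^p(G)}}{|U|^{1/p}}\;\longrightarrow\;\frac{\|f*\mathbf{1}_U\|_{L^1(G)}}{|U|}\qquad(p\to 1^+),$$
since $f*\mathbf{1}_U$ is bounded with compact support; then let $U$ shrink so the right-hand side tends to $\|f\|_{L^1(G)}$, and pass to general $f\in L^1(G)$ by density. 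For what it is worth, the paper's own proof of right-continuity on $[1,2)$ interpolates between the exponents $1$ and $p_\theta>p$, an argument that degenerates at $p=1$ exactly (there $p_\theta=1$), so the endpoint genuinely requires a separate computation of the above kind; note also that none of the paper's applications actually use continuity at $p=1$ or $p=\infty$.
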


\begin{proof}
	Since $\pf_p^*(G)=\pf_q^*(G)$ for $p,q\in[1,\infty]$ satisfying $\frac{1}{p}+\frac{1}{q}=1$, it suffices to verify this map is continuous when restricted to $[1,2]$.
	
	Fix $1< p\leq 2$.
	Let $p_\theta$ be the unique element of $(1,p)$ satisfying
	$$ \frac{1}{p_\theta}=\frac{1-\theta}{1}+\frac{\theta}{p}$$
	for $\theta\in (0,1)$.
	Then
	$$ \|f\|_{\pf_{p_\theta}^*(G)}\leq \|f\|_{L^1(G)}^{1-\theta}\|f\|_{\pf^*_p(G)}^\theta$$
	for every $\theta\in (0,1)$ by Lemma \ref{lem:SW}(a). Allowing $\theta\to 1^-$, we deduce
	$$ \limsup_{p'\to p^-}\|f\|_{\pf_{p'}^*(G)}\leq \|f\|_{\pf_p^*(G)}$$
	and, hence,	$ \lim_{p'\to p^-} \|f\|_{\pf_{p'}^*(G)}=\|f\|_{\pf_p^*(G)}.$
	

	Next we consider $p\in [1,2)$. For each $\theta\in (0,1)$, let $p_\theta$ be the unique element of $(p,\infty)$ satisfying
	$$ \frac{1}{p}=\frac{1-\theta}{1}+\frac{\theta}{p_\theta}.$$
	Then, by Lemma \ref{lem:SW}(a),
	$$ \|f\|_{\pf_{p}^*(G)}\leq \|f\|_{L^1(G)}^{1-\theta}\|f\|_{\pf^*_{p_\theta}(G)}^\theta$$
	for every $\theta\in (0,1)$. Allowing $\theta\to 1^-$, we deduce
	$$ \liminf_{p'\to p^+}\|f\|_{\pf_{p'}^*(G)}\geq \|f\|_{\pf_p^*(G)}$$
	Thus $ \lim_{p'\to p^+} \|f\|_{\pf_{p'}^*(G)}=\|f\|_{\pf_p^*(G)}.$
\end{proof}

\begin{cor}
	Let $G$ be a locally compact group and $f\in L^1(G)$. The map
	$$ [2,\infty]\ni p\mapsto \|f\|_{C^*(\pf_{p}^*(G))}$$
	is right continuous. Equivalently,
	$$ B_{\pf^*_p}(G)=\bigcap_{\epsilon>0}B_{\pf^*_{p+\epsilon}}(G)$$
	for every $2\leq p<\infty$.
\end{cor}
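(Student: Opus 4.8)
The plan is to prove directly that $\lim_{q\to p^{+}}\|f\|_{C^*(\pf_q^*(G))}=\|f\|_{C^*(\pf_p^*(G))}$ for every $f\in L^1(G)$ and $2\le p<\infty$, and then to read off the Fourier--Stieltjes reformulation by a duality argument. First I would record the monotonicity: by Corollary \ref{C:gen properties-Full c-alg of symm conv}$(a)$ the map $q\mapsto\|f\|_{C^*(\pf_q^*(G))}$ is non-decreasing on $[2,\infty]$, so $\|f\|_{0}:=\inf_{q>p}\|f\|_{C^*(\pf_q^*(G))}=\lim_{q\to p^{+}}\|f\|_{C^*(\pf_q^*(G))}$ exists and satisfies $\|f\|_{0}\ge\|f\|_{C^*(\pf_p^*(G))}$; only the reverse inequality is at stake. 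Since $\|\cdot\|_{0}$ is a decreasing limit of C*-norms on $L^1(G)$ it is again a C*-norm (it is a genuine norm because it dominates $\|\cdot\|_{C^*_r(G)}$), so its completion $A_0$ is a C*-completion of $L^1(G)$ of which $C^*(\pf_p^*(G))$ is a quotient, and $\|f\|_{A_0}=\|f\|_{0}$ for $f\in L^1(G)$.

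The heart of the matter is to show every $*$-representation of $A_0$ is the integrated form of a $*$-representation of $\pf_p^*(G)$. Suppose $\pi$ is a $*$-representation of $L^1(G)$ with $\|\pi(g)\|\le\|g\|_{0}$ for all $g\in L^1(G)$. Then for every $q>p$,
\[
\|\pi(g)\|\le\|g\|_{0}\le\|g\|_{C^*(\pf_q^*(G))}\le\|g\|_{\pf_q^*(G)},
\]
and letting $q\to p^{+}$, Proposition \ref{prop:continuous} (together with the monotonicity of $q\mapsto\|g\|_{\pf_q^*(G)}$ on $[2,\infty]$) gives $\|\pi(g)\|\le\inf_{q>p}\|g\|_{\pf_q^*(G)}=\|g\|_{\pf_p^*(G)}$. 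Hence $\pi$ extends by density to a $*$-representation of the Banach $*$-algebra $\pf_p^*(G)$, so $\|\pi(f)\|\le\|f\|_{C^*(\pf_p^*(G))}$ by the defining property of the enveloping C*-algebra. Taking the supremum over all such $\pi$ and using that $\|f\|_{A_0}=\|f\|_{0}$ is exactly the supremum of $\|\pi(f)\|$ over those representations, we obtain $\|f\|_{0}\le\|f\|_{C^*(\pf_p^*(G))}$, which finishes the proof of right continuity.

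For the equivalence with the Fourier--Stieltjes statement: right continuity says precisely that $A_0=C^*(\pf_p^*(G))$ as C*-completions of $L^1(G)$, whence $B_{\pf_p^*}(G)=C^*(\pf_p^*(G))^{*}=A_0^{*}$. Each quotient map $C^*(\pf_q^*(G))\to A_0$ with $q>p$ dualizes to an isometric inclusion of $A_0^{*}$ into $B_{\pf_q^*}(G)$ inside $B(G)$, so $A_0^{*}\subseteq\bigcap_{q>p}B_{\pf_q^*}(G)=\bigcap_{\epsilon>0}B_{\pf_{p+\epsilon}^*}(G)$; conversely, if $u$ lies in this intersection then $|\langle f,u\rangle|\le\|u\|_{B(G)}\,\|f\|_{C^*(\pf_q^*(G))}$ for every $q>p$, hence $|\langle f,u\rangle|\le\|u\|_{B(G)}\,\|f\|_{0}$, so $u\in A_0^{*}$. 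This yields $B_{\pf_p^*}(G)=\bigcap_{\epsilon>0}B_{\pf_{p+\epsilon}^*}(G)$.

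I expect the only genuinely nontrivial step to be the middle one: that a representation contractive for $\|\cdot\|_{\pf_q^*(G)}$ for all $q$ slightly above $p$ is automatically contractive for $\|\cdot\|_{\pf_p^*(G)}$. This is exactly where Proposition \ref{prop:continuous} is indispensable; the analogous passage to the limit is unavailable for the family $C^*_{L^q}(G)$, which is why Conjecture \ref{conjecture} is still open. Everything else is routine bookkeeping with enveloping C*-algebras and duals of C*-quotients.
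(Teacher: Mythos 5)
Your proposal is correct and follows essentially the same route as the paper: the decisive step in both is to combine the monotonicity from Lemma \ref{lem:SW}(b) with the continuity of $p\mapsto\|f\|_{\pf_p^*(G)}$ from Proposition \ref{prop:continuous} to show that a $*$-representation contractive for every $\|\cdot\|_{\pf_{p+\epsilon}^*(G)}$ is contractive for $\|\cdot\|_{\pf_p^*(G)}$, hence extends to $\pf_p^*(G)$. You merely spell out more explicitly (via the auxiliary completion $A_0$ and the duality with $B(G)$) the routine enveloping-C*-algebra bookkeeping that the paper compresses into its final line.
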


\begin{proof}
	Let $p\geq 2$. Suppose $\pi$ is a representation of $G$ that extends to a $*$-representation of $\pf_{p+\epsilon}^*(G)$ for every $\epsilon>0$. Then
	$$ \|\pi(f)\|\leq \|f\|_{\pf^*_{p+\epsilon}(G)} \ \ \ (f\in L^1(G),\epsilon >0).$$
	By Lemma \ref{lem:SW}(b) and Proposition \ref{prop:continuous}, this is equivalent to the condition that
	$$ \|\pi(f)\|\leq \|f\|_{\pf^*_p(G)} \ \ \ (f\in L^1(G)).$$
	Thus $\pi$ extends to a $*$-representation of $\pf_p(G)$. So
	$$ \|f\|_{C^*(\pf^*_p(G))}=\inf_{\epsilon>0}\|f\|_{C^*(\pf^*_{p+\epsilon}(G))}.$$
\end{proof}

\begin{rem}
	Let $G$ be a locally compact group. The map
	$$ [2,\infty]\ni p\mapsto \|f\|_{C^*(\pf_{p}^*(G))}$$
	is not necessarily left continuous. Indeed, suppose that $G$ is a non-compact group with Kazhdan's property (T). Then there exists $f\in L^1(G)$ and $\epsilon>0$ so that
	$$ \|\pi(f)\|\leq\left|\int_G f(s)\,ds\right|-\epsilon$$
	for every unitary representation $\pi$ of $G$ not containing $1_G$ as a subrepresentation (see \cite{propT}). Hence,
	$$ \|f\|_{C^*(\pf_p^*(G))}\leq \left|\int_G f(s)\,ds\right|-\epsilon<\|f\|_{C^*(G)}$$
	for every $p<\infty$,  and so,
	$$ \lim_{p\to \infty}\|f\|_{C^*(\pf_p^*(G))}\neq \|f\|_{C^*(G)}.$$
\end{rem}

\section{Integrable Haagerup property}

A locally compact group $G$ has the \emph{Haagerup property} or \emph{property $C_0$} if
$$B(G)=\overline{B_0(G)}^{\sigma(B(G),C^*(G))},$$
where $B_0(G):=B(G)\cap C_0(G)$ is the Rajchman algebra of $G$.
Equivalently, $G$ has the Haagerup property if and only if there exists a net $\{\phi_i\}\subset C_0(G)$ of normalized positive definite functions so that $\phi_i\to 1$ uniformly on compact subsets of $G$ (see \cite{haag-prop}). We will require a refinement of this concept for our purposes. Many of the most important examples of groups with the Haagerup property will satisfy this refinement.

\begin{defn}
	A locally compact group $G$ has the \emph{integrable Haagerup property} if
	$$ B(G)=\overline{\bigcup_{1\leq p<\infty}B_{L^p}(G)}^{\sigma(B(G),C^*(G))}.$$
	Equivalently, $G$ has the integrable Haagerup property if there exists positive definite functions
	$$\{\phi_i\}\subset \bigcup_{1\leq p<\infty}L^p(G)$$
	so that $\phi_i\to 1$ uniformly on compact subsets of $G$.
\end{defn}

	\noindent Though not named, this property was studied earlier by Buss, Echterhoff and Willett in \cite{BEW2}. Every group possessing the integrable Haagerup property necessarily has the Haagerup property since $L^p(G)\cap B(G)\subset B_0(G)$ for every $1\leq p<\infty$ as every function in $B(G)$ is (left) uniformly continuous and Haar measure is left invariant. Let us begin by noting Conjecture \ref{conjecture} is satisfied by groups with the integrable Haagerup property.

\begin{prop}\label{P:Int Hag Prop-cts Cp-alg norm}
	If $G$ is a locally compact group with the integrable Haagerup property, then the map
	$$ [1,\infty]\ni p\mapsto\|f\|_{C^*_{L^p}(G)}$$
	is continuous for every $f\in L^1(G)$. In particular,
$$\CLp=C^*_{L^{p+}}(G)$$ for every $1\leq p\leq \infty$.
\end{prop}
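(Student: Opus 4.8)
The plan is to study the non‑decreasing function $g(p):=\|f\|_{C^*_{L^p}(G)}$ on $[1,\infty]$ (monotonicity comes from the canonical quotients $C^*_{L^p}(G)\to C^*_{L^{p'}}(G)$, $p'\le p$), which is constantly $\|f\|_{C^*_r(G)}$ on $[1,2]$, and to prove that it is left‑continuous at every $p_0\in(2,\infty]$ and right‑continuous at every $p_0\in[2,\infty)$; since right‑continuity at $p_0$ is precisely the assertion $C^*_{L^{p_0}}(G)=C^*_{L^{p_0+}}(G)$, this also gives the final statement. The only new input is the net $\{\phi_i\}$ of normalized positive definite functions with $\phi_i\in L^{r_i}(G)$, $r_i<\infty$, and $\phi_i\to1$ uniformly on compacta. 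The recurring mechanism is that if $u$ is a positive definite matrix coefficient then $u\phi_i$ is again positive definite, with a strictly smaller integrability exponent, and such functions are controlled by lower $L^s$‑C*‑norms. Concretely I will use: (a) a product of positive definite functions is positive definite and $|\psi|\le\psi(e)$ for $\psi$ positive definite; (b) the GNS representation of a positive definite $u\in L^t(G)\cap B(G)$ is an $L^t$‑representation (the $G$‑translates of its cyclic vector span an $L^t$‑subspace), hence $\|\pi_u(h)\|\le\|h\|_{C^*_{L^t}(G)}$ for all $h\in L^1(G)$; and (c) every positive definite function in $B_{L^t}(G)$ is a limit, uniformly on compacta, of positive definite functions genuinely in $L^t(G)$ (diagonal coefficients at vectors of the dense $L^t$‑subspaces of $L^t$‑representations, and finite convex combinations of these).

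For left‑continuity at $p_0\in(2,\infty]$ it suffices, by definition of $\|\cdot\|_{C^*_{L^{p_0}}(G)}$, to show $\|\pi(f)\|\le\sup_{s<p_0}g(s)$ for every $L^{p_0}$‑representation $\pi$ (for $p_0=\infty$, every representation, with $\tfrac1\infty:=0$). Fix such a $\pi$, a vector $\xi$ of a dense $L^{p_0}$‑subspace, and put $u:=\pi_{\xi,\xi}\in L^{p_0}(G)\cap B(G)$ (just $u\in B(G)$ when $p_0=\infty$). For each $i$, $u\phi_i$ is positive definite and lies in $L^{t_i}(G)\cap B(G)$ with $\tfrac1{t_i}=\tfrac1{p_0}+\tfrac1{r_i}$, so $t_i<p_0$; by (b), $\|\pi_{u\phi_i}(h)\|\le g(t_i)\le\sup_{s<p_0}g(s)$ for all $h$. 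Taking $h=f\in C_c(G)$, positivity of $\pi_{u\phi_i}\big((f^**f)^{(*n)}\big)=\big(\pi_{u\phi_i}(f)^*\pi_{u\phi_i}(f)\big)^n$ and the GNS identity $\int_G(f^**f)^{(*n)}(s)(u\phi_i)(s)\,ds=\big\langle\pi_{u\phi_i}\big((f^**f)^{(*n)}\big)\xi_{u\phi_i},\xi_{u\phi_i}\big\rangle$ give $\int_G(f^**f)^{(*n)}(s)(u\phi_i)(s)\,ds\le\big(\sup_{s<p_0}g(s)\big)^{2n}(u\phi_i)(e)$; letting $i\to\infty$ (uniform convergence on the compact support of $(f^**f)^{(*n)}$, and $(u\phi_i)(e)=u(e)=\|\xi\|^2$) yields
$$\big\langle\pi\big((f^**f)^{(*n)}\big)\xi,\xi\big\rangle=\int_G(f^**f)^{(*n)}(s)u(s)\,ds\le\Big(\sup_{s<p_0}g(s)\Big)^{2n}\|\xi\|^2 .$$
Feeding this into Lemma \ref{lem:CHH} (with dense subspace $H_0$) gives $\|\pi(f)\|\le\sup_{s<p_0}g(s)$ for $f\in C_c(G)$, hence for all $f\in L^1(G)$ by density; taking the supremum over $\pi$ gives $g(p_0)\le\sup_{s<p_0}g(s)$, i.e.\ left‑continuity at $p_0$.

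For right‑continuity at $p_0\in[2,\infty)$ I would show $\|\pi(f)\|\le g(p_0)$ for every $\pi$ that is an $L^{p_0+\epsilon}$‑representation for all $\epsilon>0$ (so that $\sup$ over such $\pi$ is $\|f\|_{C^*_{L^{p_0+}}(G)}$). Fixing $\xi\in H_\pi$ and $u:=\pi_{\xi,\xi}$, one now only knows $u\in\bigcap_{\epsilon>0}B_{L^{p_0+\epsilon}}(G)$, so $u$ need not lie in any concrete $L^t(G)$. The fix: for each $i$ choose $\epsilon_i>0$ so small that $\tfrac1{p_0+\epsilon_i}+\tfrac1{r_i}\ge\tfrac1{p_0}$ (e.g.\ $\epsilon_i=p_0^2/(r_i-p_0)$ when $r_i>p_0$); by (c), write $u$ as a uniform‑on‑compacta limit of positive definite $v_j\in L^{p_0+\epsilon_i}(G)\cap B(G)$; then $v_j\phi_i$ is positive definite, bounded, and lies in $L^{t_i}(G)$ with $t_i\le p_0$, so $v_j\phi_i\in L^{p_0}(G)\cap B(G)$ and, by (b), $\|\pi_{v_j\phi_i}(h)\|\le g(p_0)$. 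Running the estimate of the previous paragraph with $v_j\phi_i$ in place of $u\phi_i$, then letting $j\to\infty$ (using $(v_j\phi_i)(e)=v_j(e)\to u(e)=\|\xi\|^2$) and $i\to\infty$, gives $\big\langle\pi\big((f^**f)^{(*n)}\big)\xi,\xi\big\rangle\le g(p_0)^{2n}\|\xi\|^2$ for $f\in C_c(G)$, whence $\|\pi(f)\|\le g(p_0)$ by Lemma \ref{lem:CHH}, first for $f\in C_c(G)$ and then for all $f\in L^1(G)$. Thus $C^*_{L^{p_0+}}(G)=C^*_{L^{p_0}}(G)$; combined with left‑continuity and monotonicity, $g$ is continuous on $[1,\infty]$, and $C^*_{L^p}(G)=C^*_{L^{p+}}(G)$ for all $1\le p\le\infty$.

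The main obstacle is the right‑continuity step. The tension there is between "$u\in B_{L^{p_0+\epsilon}}(G)$ for every $\epsilon>0$" — all that an $L^{p_0+\epsilon}$‑representation for all $\epsilon$ provides — and "$u$ lies in a concrete $L^t(G)$", which is what makes multiplication by $\phi_i$ land inside $L^{p_0}(G)$; reconciling the two forces one both to replace $u$ by genuine $L^{p_0+\epsilon}$‑matrix coefficients via (c) and to let $\epsilon=\epsilon_i$ shrink in step with the a priori unbounded exponent $r_i$ of $\phi_i$, so the order of the two limits and the norm bookkeeping must be handled with care. Left‑continuity is by contrast the clean specialization in which $u$ already lies in $L^{p_0}(G)$; for $p_0=\infty$ it is essentially a restatement of the integrable Haagerup property (weak*‑density of $\bigcup_{s<\infty}B_{L^s}(G)$ in $B(G)$).
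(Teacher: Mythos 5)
Your argument is correct, and it rests on the same core mechanism as the paper's proof: multiply a coefficient function by the net $\{\phi_i\}$ coming from the integrable Haagerup property so as to push it into a strictly smaller integrability class, then recover the original function by uniform convergence on compacta. The difference is in the implementation. The paper works entirely at the level of dual spaces: it cites \cite[Proposition 4.5]{W-Fourier} for the ``ideal with exponents'' property (a product of an element of $B_{L^{p+\epsilon}}(G)$ with a positive definite function in $B_{L^r}(G)$, $r<\infty$, lands in $B_{L^{p-\epsilon'}}(G)$ for some $\epsilon'>0$), deduces that $\bigcap_{\epsilon>0}B_{L^{p+\epsilon}}(G)$ equals the weak*-closure of $\bigcup_{\epsilon>0}B_{L^{p-\epsilon}}(G)$, and reads off the equality of the two one-sided limits of $p'\mapsto\|f\|_{C^*_{L^{p'}}(G)}$ from the correspondence between dominated C*-seminorms and weak*-closed subspaces of $B(G)$. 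You instead rebuild everything from first principles at the level of individual positive definite functions: H\"older's inequality replaces the ideal property, and Lemma \ref{lem:CHH} together with the GNS construction replaces the duality formalism. What your route buys is self-containedness --- no appeal to \cite{W-Fourier} --- and an explicit quantitative estimate $\langle\pi((f^**f)^{(*n)})\xi,\xi\rangle\le C^{2n}\|\xi\|^2$; what it costs is the extra approximation step (c), which is needed precisely because an element of $\bigcap_{\epsilon>0}B_{L^{p_0+\epsilon}}(G)$ need not lie in any concrete $L^t(G)$. That step is the only place requiring care: it uses the Arsac-type fact recalled in the paper's preliminaries (positive definite functions in $B_\Pi$ are limits, uniformly on compacta, of positive definite functions in $A_\Pi$) together with the observation that positive definite elements of $A_\Pi$ are $B(G)$-norm limits of finite sums of diagonal coefficients at vectors of the dense $L^t$-subspaces; you gesture at this correctly, and your device of shrinking $\epsilon_i$ in step with the exponent $r_i$ of $\phi_i$ handles the order of the two limits properly.
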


\begin{proof}
	Since $C^*_{L^p}(G)=C^*_r(G)$ for each $1\leq p<2$, this map is constant and, hence, continuous on $[1,2)$. In particular, we have shown the map to be continuous at $p=1$.
	
	Let $p\in (1,\infty)$ and $u\in \bigcap_{\epsilon>0}B_{L^{p+\epsilon}}(G)$. Choose a net $\{u_i\}\subset \bigcup_{1\leq r<\infty}B_{L^r}(G)$ of positive definite functions converging to 1 uniformly on compact subsets of $G$. Then the net $\{uu_i\}$ belongs to $\bigcup_{0<\epsilon\leq p-1} B_{L^{p-\epsilon}}(G)$, by \cite[Proposition 4.5]{W-Fourier}, and converges to $u$ uniformly on compact subsets of $G$. Hence,
	$$\bigcap_{\epsilon>0}B_{L^{p+\epsilon}}(G)=\overline{\bigcup_{0<\epsilon\leq p-1} B_{L^{p-\epsilon}}(G)}^{\sigma(B(G),C^*(G))},$$
	and so,
	$$ \lim_{p'\to p^+}\|f\|_{C^*_{L^{p'}(G)}}=\lim_{p'\to p^-}\|f\|_{C^*_{L^{p'}(G)}}.$$
	It follows that $ [1,\infty]\ni p\mapsto\|f\|_{C^*_{L^p}(G)}$ is continuous.
\end{proof}

\begin{cor}
	If $G$ is a nonamenable locally compact group with the integrable Haagerup property, then $G$ admits $2^{\aleph_0}$ mutually non-canonically isomorphic exotic group C*-algebras of the form $\CLp$.
\end{cor}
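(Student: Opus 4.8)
The plan is to pin down a continuum of distinct C*-completions by tracking the value of a single element under the monotone continuous family $p\mapsto\|f\|_{\CLp}$.

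First I would invoke nonamenability through Hulanicki's theorem to produce $f\in L^1(G)$ (one may take $f\in C_c(G)$ with $f\geq 0$) satisfying $\|f\|_{C^*_r(G)}<\|f\|_{C^*(G)}$. Writing $F_f(p):=\|f\|_{\CLp}$ for $p\in[2,\infty]$, the map $F_f$ is continuous by Proposition~\ref{P:Int Hag Prop-cts Cp-alg norm} (this is where the integrable Haagerup property enters) and non-decreasing because the identity on $L^1(G)$ extends to a quotient map $\CLp\to C^*_{L^{p'}}(G)$ whenever $2\leq p'\leq p\leq\infty$. Since $\CLp=C^*_r(G)$ for $p\leq 2$ and $C^*_{L^\infty}(G)=C^*(G)$, we have $F_f(2)<F_f(\infty)$.

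Next I would apply the intermediate value theorem on $[2,\infty]$: $F_f$ attains every value between $F_f(2)$ and $F_f(\infty)$, so for each $c$ in the nondegenerate interval $I:=(F_f(2),F_f(\infty))$ one may choose $p_c\in(2,\infty)$ with $\|f\|_{C^*_{L^{p_c}}(G)}=c$. The assignment $c\mapsto p_c$ is injective since $c$ is recovered as $F_f(p_c)$, so $\{p_c:c\in I\}$ has cardinality $2^{\aleph_0}$. For distinct $c,c'\in I$ the C*-norms $\|\cdot\|_{C^*_{L^{p_c}}(G)}$ and $\|\cdot\|_{C^*_{L^{p_{c'}}}(G)}$ disagree on $f$, so the identity map on $L^1(G)$ does not extend to a $*$-isomorphism between $C^*_{L^{p_c}}(G)$ and $C^*_{L^{p_{c'}}}(G)$; that is, the family $\{C^*_{L^{p_c}}(G)\}_{c\in I}$ is pairwise non-canonically isomorphic. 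Finally, each $C^*_{L^{p_c}}(G)$ is exotic: from $\|f\|_{C^*_r(G)}<c<\|f\|_{C^*(G)}$ we see that the canonical quotients $C^*(G)\to C^*_{L^{p_c}}(G)$ and $C^*_{L^{p_c}}(G)\to C^*_r(G)$ are both non-injective, being non-isometric on $L^1(G)$.

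I do not anticipate a genuine obstacle here; the only subtlety is interpreting ``mutually non-canonically isomorphic'' correctly, namely as the failure of the identity on $L^1(G)$ to extend to a $*$-isomorphism, which is exactly what disagreement of the two C*-norms at the single point $f$ rules out. All the analytic substance has already been absorbed into Proposition~\ref{P:Int Hag Prop-cts Cp-alg norm} and Hulanicki's theorem.
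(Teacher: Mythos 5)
Your argument is correct. The engine is the same as the paper's — non\-constancy of the continuous map $p\mapsto\|f\|_{\CLp}$ on $[2,\infty]$ plus the intermediate value theorem, with all the analytic content carried by Proposition~\ref{P:Int Hag Prop-cts Cp-alg norm} — but you establish non-constancy differently. The paper works on the dual side: the integrable Haagerup property supplies a net of positive definite functions in $\bigcup_{p<\infty}B_{L^p}(G)$ converging to $1$ uniformly on compacta, nonamenability forces one of them outside $B_r(G)$, hence $B_r(G)\subsetneq B_{L^p}(G)$ for some \emph{finite} $p$, so the norms $\|\cdot\|_{\CLp}$ and $\|\cdot\|_{C^*_r(G)}$ already differ on $[2,p]$. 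You instead anchor the two endpoints via Hulanicki ($F_f(2)=\|f\|_{C^*_r(G)}<\|f\|_{C^*(G)}=F_f(\infty)$), which is cleaner but makes you lean on continuity of $F_f$ at $p=\infty$; that is covered by the statement of Proposition~\ref{P:Int Hag Prop-cts Cp-alg norm} (and for integrable Haagerup groups continuity at $\infty$ amounts to the defining density $B(G)=\overline{\bigcup_p B_{L^p}(G)}^{\sigma(B(G),C^*(G))}$), whereas the paper's route never needs the endpoint. Your remaining steps — injective $*$-homomorphisms of C*-algebras are isometric, so disagreement of the norms at the single element $f$ rules out a canonical isomorphism and forces both quotient maps $C^*(G)\to C^*_{L^{p_c}}(G)\to C^*_r(G)$ to be non-injective — are exactly right and correctly interpret ``mutually non-canonically isomorphic.''
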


\begin{proof}
	If $G$ has the integrable Haagerup property, then there exists a net $\{u_i\}$ of positive definite function in $\bigcup_{1\leq p<\infty} B_{L^p}(G)$ converging to $1$ uniformly on compact subsets of $G$. Then $\{u_i\}\not\subseteq B_r(G):=C^*_r(G)^*$ since $G$ is nonamenable and, hence, there exists $2<p<\infty$ so that $B_{r}(G)\subsetneq B_{L^p}(G)$. As such,
	$$ \|\cdot\|_{C^*_{L^p}(G)}\neq \|\cdot\|_{C^*_r(G)}.$$
	The result now follows by the preceding proposition.
\end{proof}

The rest of this section is devoted to present various classes of groups with the integrable Haagerup property. We first note that groups known to have the Haagerup property tend to be geometric in nature. Indeed, examples of groups with the Haagerup property often possess a natural, isometric action of $G$ on a metric space $(X,d)$ such that the map
	$$ G\ni s\mapsto d(sx_0,x_0)$$
	is a  proper conditionally negative definite function for some $x_0\in X$. In this case, the positive definite functions $\phi_t: G\to [0,1]$ defined by $\phi_t(s)=e^{-td(sx_0,x_0)}$ belong to $C_0(G)$ for every $t\in (0,\infty)$ and converge pointwise to $1$ uniformly on compact sets of $G$ as $t\to 0^+$. In such cases, the functions $\phi_t$ frequently belongs to $\bigcup_{1\leq p<\infty}L^p(G)$. Such groups will be our main source of examples for groups with the integrable Haagerup property.
	
	\begin{rem}\label{R1}
		Suppose $G$ is a locally compact group and $\psi: G\to [0,\infty)$ is measurable. As a matter of notation, we will let $\phi_{t,\psi}:G \to [0,1]$ be the function defined by
		$$\phi_{t,\psi}(s):= e^{-t\psi(s)}$$
		for $s\in G$ and $t\in (0,\infty)$. If $\phi_{t_0,\psi}\in L^{p}(G)$ for some $t_0\in (0,\infty)$ and $0< p<\infty$, then $\phi_{t,\psi}\in L^{\frac{pt_0}{t}}(G)$ for every $t\in (0,\infty)$ since
		\begin{equation*}
		\phi_{t,\psi}(s)=(\phi_{t_0,\psi})^{\frac{t}{t_0}}.
		\end{equation*}
		In particular, $\phi_{t,\psi}\in \bigcup_{1\leq p<\infty} L^p(G)$ for some $t\in (0,\infty)$ if and only if $\phi_{t,\psi}\in \bigcup_{1\leq p<\infty} L^p(G)$ for every $t\in (0,\infty)$. We should point out that since each $\phi_{t,\psi}$ is a bounded fucntion, then $\phi_{t,\psi}\in L^p(G)$ for some $p>0$ implies that $\phi_{t,\psi} \in L^{p'}(G)$ for every $p'>p$.
	\end{rem}

	Suppose $G$ is a compactly generated group and $V$ is compact symmetric neighbourhood of $e\in G$ that generates $G$. The \emph{word length function} $\mc L_V: G\to [0,\infty)$ is defined by
	$$ \mc L_V(s)=\left\{\begin{array}{c l}
	\min\{n\in\N : s\in V^n\}, &\text{if }s\neq e\\
	0, & \text{if }s=e.
	\end{array}\right.$$
	Our main method of showing a conditionally negative definite function $\psi$ satisfies $\phi_{t,\psi}\in \bigcup_{1\leq p< \infty}L^p(G)$ for such groups $G$ will be to compare the growth rate of $\psi$ against $\mc L_V$ for some $V$.
	
	The following Lemma has been obtained in the case of finitely generated discrete groups by Buss, Echterhoff, and Willett (see \cite[Lemma 7.3]{BEW2}).
	
	\begin{lem}\label{lem:integrable}
		Let $G$ be a compactly generated group and $V$ a compact symmetric neighbourhood of $e\in G$ that generates $G$. If $\psi:G\to [0,\infty)$ is a measurable function of $G$ such that there exists a compact subset $K$ of $G$ and $c>0$ so that $\mc \psi(s)\geq c\mc L_V(s)$ for every $s\in G\backslash K$, then $\phi_{t,\psi}\in \bigcup_{1\leq p<\infty} L^p(G)$ for every $t\in (0,\infty)$.
	\end{lem}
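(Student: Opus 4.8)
The plan is to reduce the statement to the elementary fact that a compactly generated locally compact group has at most exponential volume growth, and then to bound $\int_G \phi_{t,\psi}^p$ by a convergent geometric series once $p$ is taken large enough. Throughout I would write $|E|$ for the left Haar measure of a Borel set $E\subseteq G$; since $V$ is compact each $V^n$ is compact, hence $|V^n|<\infty$, and the level set $\{s\in G:\mc L_V(s)=n\}$ is contained in $V^n$ for every $n\geq 1$ (while $\{s:\mc L_V(s)=0\}=\{e\}$).

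The first, and essentially only substantive, step is the growth bound. Since $e$ lies in the interior of $V$, the open sets $g\cdot\mathrm{int}(V)$ with $g$ ranging over the compact set $V^2$ cover $V^2$, so a finite subcover yields a finite set $F\subseteq G$ with $V^2\subseteq FV$ and $|F|\geq 1$. Iterating this through the identity $V^{n+1}=V^2V^{n-1}$ gives $V^{n+1}\subseteq FV^n$, hence $V^n\subseteq F^{n-1}V$, and therefore, using left invariance of Haar measure, $|V^n|\leq |F|^{n-1}|V|\leq |F|^{n}|V|$ for all $n\geq 1$. Consequently, for every $\alpha>0$,
$$ \int_G e^{-\alpha\mc L_V(s)}\,ds \;\leq\; |V| + \sum_{n=1}^\infty e^{-\alpha n}\,|V^n| \;\leq\; |V|\sum_{n=0}^\infty\big(e^{-\alpha}|F|\big)^n \;<\;\infty \qquad\text{whenever } \alpha>\log|F|. $$

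To finish, I would fix $t\in(0,\infty)$ and choose $p\in[1,\infty)$ large enough that $ptc>\log|F|$, which is possible because $t,c>0$. On the compact set $K$ one has $0\leq\phi_{t,\psi}\leq 1$ and $|K|<\infty$, whereas on $G\setminus K$ the hypothesis $\psi(s)\geq c\,\mc L_V(s)$ gives $\phi_{t,\psi}(s)^p=e^{-pt\psi(s)}\leq e^{-ptc\,\mc L_V(s)}$. Adding the contribution over $K$ to the previous display applied with $\alpha=ptc$ shows $\int_G\phi_{t,\psi}(s)^p\,ds<\infty$, i.e. $\phi_{t,\psi}\in L^p(G)\subseteq\bigcup_{1\leq p<\infty}L^p(G)$; the argument is uniform in $t$ (only $p$ changes), in agreement with the remark preceding the lemma. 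The sole place requiring genuine care is the extraction of the finite cover $V^2\subseteq FV$, which rests on compactness of $V^2$ together with $e\in\mathrm{int}(V)$; everything after that is routine bookkeeping.
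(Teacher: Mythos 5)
Your proof is correct, and its overall skeleton coincides with the paper's: off the compact set $K$ you dominate $\phi_{t,\psi}$ by $\phi_{ct,\mc L_V}$, decompose $G$ into the word-length level sets $\{\mc L_V=n\}\subseteq V^n$, and sum a geometric series once $p$ is large enough relative to $t$ and $c$. The one genuine difference is how the exponential volume bound is obtained. The paper invokes the existence and finiteness of $C=\lim_n\nu(V^n)^{1/n}$ (citing Palma) and then applies the root test to $\sum_n\nu(V^n)e^{-ptn}$, whereas you prove the bound $\nu(V^n)\leq |F|^{n}\nu(V)$ from scratch via the covering $V^2\subseteq FV$ (using compactness of $V^2$ and $e\in\mathrm{int}(V)$) and the iteration $V^{n+1}\subseteq FV^n$. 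Your route is self-contained and entirely elementary, at the cost of producing a cruder constant ($|F|$ rather than the actual growth rate $C$); since the lemma only needs \emph{some} finite $p$, nothing is lost. One small point of care that you handle correctly: the inequality $e^{-\alpha n}|F|^{n-1}\leq(e^{-\alpha}|F|)^n$ uses $|F|\geq 1$, and the threshold $\alpha>\log|F|$ then plays exactly the role of $p>\frac{\log C}{t}$ in the paper.
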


	\begin{proof}
		By Remark \ref{R1}, it suffices to show $\phi_{t,\mc L_V}\in \bigcup_{1\leq p<\infty}L^p(G)$ since $\phi_{t,\psi}$ is bounded and
		$$ \phi_{t,\psi}(s)\leq \phi_{ct,\mc L_V} $$
		for all $s\in G\backslash K$. Let
		$$C:=\lim_{n\to\infty}\nu(V^n)^{\frac{1}{n}},$$
where $\nu$ denotes the left Haar measure of $G$. This limit exists and it is a finite number  greater or equal to 1; see \cite[Theorem 1.5]{Palma1}. Then
\begin{align*}
  \int_{G}\phi_{t,\mc L_V}(s)^p\,ds &= \nu(\{e\})+\sum_{n=1}^\infty \nu(V^n\setminus V^{n-1}) e^{-ptn} \\
  & \leq  \nu(\{e\})+\sum_{n=1}^\infty \nu(V^n) e^{-ptn} \\
  &= \nu(\{e\})+\sum_{n=1}^\infty \big (\nu(V^n)^\frac{1}{n}e^{-pt}\big )^n<\infty
\end{align*}
		when $p>\frac{\log C}{t}$ by the root test.
	\end{proof}
	
	\begin{example}\label{ex:integrable-Haagerup} The following groups have the integrable Haagerup property.
		\begin{enumerate}[$(a)$]
			\item Let $2\leq d<\infty$ and $|\cdot|$ be the canonical word length function for $\bb F_d$. Then $|\cdot|$ is a negative definite function (see \cite{Haag}), and so,  $\phi_{t,|\cdot|}\in \bigcup_{1\leq p<\infty} \ell^p(\bb F_d)$.
			\item Let $(W,S)$ be a finite Coxeter system. The word length function $|\cdot|$ is negative definite (see \cite{BJS}) and, hence, the positive definite functions $W\ni s\mapsto e^{-t|s|}$ belong to $\bigcup_{1\leq p<\infty}\ell^p(W)$.
		\end{enumerate}
	\end{example}

Recall that an action of a locally compact group $G$ on a pseudo-metric space $(X,d)$ is:
\begin{itemize}
	\item \emph{metrically proper} 
if for all $x\in X$ and $R>0$, the set
$\{s\in G : d(sx,x)\leq R\}$ is pre-compact;
\item \textit{locally bounded} if $K\cdot x$ is bounded for all $x\in X$ and compact subsets $K\subset G$;
\item \textit{cobounded} (resp., \textit{cocompact}) if there is a bounded (resp., compact) subset $B$ of $X$ such that $G\cdot B=X$;
\item \textit{geometric} if it is isometric, {metrically proper}, locally bounded, and cobounded.
\end{itemize}

Further recall that a metric space $(X,d)$ is \textit{$c$-coarsely geodesic} for $c>0$ if for all $x,y\in X$, there exists a function $f\colon [a,b]\to X$ so that $f(a)=x$, $f(b)=y$ and
$$\big|d(f(s),f(t))-|s-t|\big|\leq c$$
for all $s,t\in [a,b]$. A metric space that is $c$-coarsely geodesic for some $c>0$ is \emph{coarsely geodesic}.

{


\begin{prop}\label{prop:replacement}
Let $G$ be a locally compact group
equipped with a geometric action on a coarsely geodesic metric space $(X,d)$, and let $x_0\in X$.
Then $G$ is compactly generated and the length function $\psi: G\to [0,\infty)$ defined by
$$\psi(s)=d(sx_0,x_0) \qquad(s\in G)$$ 
satisfies $\phi_{t,\psi}\in\bigcup_{1\leq p< \infty}L^p(G)$ for every $t\in (0,\infty)$.
\end{prop}

}

\begin{proof}
	A variation of the classical \v Svarc-Milnor lemma due to Salmi (see \cite[Lemma 2.2]{Salmi}) shows that $G$ is compactly generated and{, for any compact symmetric generating set $V$ of $G$, the identity map on $G$ is a quasi-isometry from $(G,\psi)$ to $(G, \mc L_V)$ for any compact generating set $V$ of $G$, i.e. there exists $C>0$ and $r>0$ so that
			$$C^{-1}\mc L_V(s)-r\leq \psi(s)\leq C\mc L_V(s)+r.$$
		for all $s\in G$.}
So it follows from Lemma \ref{lem:integrable} that $\phi_{t,\psi}\in \bigcup_{1\leq p< \infty} L^p(G)$ for every $t\in (0,\infty)$.
\end{proof}

\begin{example}\label{Ex:SU-SO-integrableHaagerup}
	Suppose a locally compact group $G$ acts
	geometrically on real or complex $n$-dimensional hyperbolic space $\bb H_n$ (e.g., $G=\text{SO}(n,1)$ or $G=\text{SU}(n,1)$). If $x_0\in \bb H_n$ is arbitrary, then $\psi: G\to [0,\infty)$ defined by $\psi(s)=d(sx_0,x_0)$ is a negative definite function satisfying the assumption of Proposition \ref{prop:replacement} (see \cite{FH}). In particular, $G$ has the integrable Haagerup property.
\end{example}

\begin{example}\label{Ex:trees}
	Let $G$ be a locally compact group acting geometrically on a tree $T$, and let $x_0$ be a vertex of $T$. Then $\psi\colon G\to [0,\infty)$ defined by $\psi(s)=d(sx_0,x_0)$ is a negative definite function satisfying the assumption of Proposition \ref{prop:replacement} by \cite[Section 1.2.3]{haag-prop} and, so, $G$ has the integrable Haagerup property.
\end{example}

Example \ref{Ex:trees} admits the following generalization.

\begin{example}\label{Ex:CAT(0)}
	Suppose a locally compact $G$ acts cellularly and geometrically on a finite dimensional CAT(0) cube complex $X$ and let $x_0$ be a vertex of $X$. Then the action of $G$ on the 1-skeleton of $X$ is geometric and if we let $d_c$ denote the path distance metric on the 1-skeleton of $X$, then $\psi\colon G\to [0,\infty)$ given by $\psi(s)=d_c(sx_0,x_0)$ is a negative definite function (see \cite[Technical Lemma]{NR}). Hence, Proposition \ref{prop:replacement} implies $G$ has the integrable Haagerup property.
\end{example}

The following proposition improves upon the preceding example and, in particular, shows that if $G$ is a closed subgroup of $\text{Aut}\, T$ for some tree $T$ whose vertices have uniformly bounded degree, then $G$ has the integrable Haagerup property.

\begin{prop}\label{P:CAT(0)-Integ Haagerup prop}
	Suppose $G$ is a locally compact group equipped with a metrically proper, cellular action on a CAT(0) cube complex $X$
	whose 1-skeleton is uniformly locally finite (i.e. each vertex in the 1-skeleton is of uniformly bounded degree).
	Let $x_0$ be a vertex of $X$ and $d_c$ denote the path distance on the $1$-skeleton of $X$. The conditionally negative definite function $\psi: G\to [0,\infty)$ defined by $\psi(s)=d_c(sx_0,x_0)$ satisfies $\phi_{t,\psi}\in\bigcup_{1\leq p< \infty}L^p(G)$ for every $t\in (0,\infty)$.
\end{prop}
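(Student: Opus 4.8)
The plan is to sidestep the word-length machinery behind Lemma~\ref{lem:integrable} and Corollary~\ref{cor:action}: since the action here need not be cocompact, $G$ need not be compactly generated, so there is no ambient word-length function available for comparison. Instead I would estimate the integral $\int_G \phi_{t,\psi}(s)^p\,d\nu(s)$ directly for large $p$, where $\nu$ is left Haar measure; since $\phi_{t,\psi}(s)^p = e^{-pt\,d_c(sx_0,x_0)}$, the task reduces to bounding $\int_G e^{-pt\,d_c(sx_0,x_0)}\,d\nu(s)$. That $\psi$ is conditionally negative definite is already recorded via \cite{NR} in the preceding remark, so only the integrability claim requires proof.

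The argument rests on two observations. First, the geometric hypothesis provides an $N$ such that every vertex of $X$ lies in at most $N$ cubes; in particular every vertex lies in at most $N$ edges, so the $1$-skeleton $X^{(1)}$ is a connected graph of vertex degree at most $N$, whence the combinatorial sphere $\{v : d_c(v,x_0)=n\}$ has at most $N^n$ elements. Since a CAT(0) cube complex---and hence its $1$-skeleton---is connected, every point of the orbit $O:=G\cdot x_0$ is a vertex at finite combinatorial distance from $x_0$, so $O$ lies in a countable union of finite spheres and is itself countable. Second, properness of the action forces the stabilizer $H:=\{s\in G: sx_0=x_0\}$ to be a compact subgroup, so $C_0:=\nu(H)$ is finite. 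Writing $O=\{g_i x_0 : i\in I\}$ with $I$ countable gives a partition $G=\bigsqcup_{i\in I} g_iH$ into left cosets of common Haar measure $C_0$, on each of which $s\mapsto d_c(sx_0,x_0)$ is constant. Hence, for $p>0$,
\begin{align*}
\int_G \phi_{t,\psi}(s)^p\,d\nu(s) &= C_0\sum_{i\in I} e^{-pt\,d_c(g_i x_0,x_0)} \le C_0\sum_{v\in X^{(0)}} e^{-pt\,d_c(v,x_0)} \\
&= C_0\sum_{n=0}^{\infty}\#\{v : d_c(v,x_0)=n\}\,e^{-ptn} \le C_0\sum_{n=0}^{\infty}\big(Ne^{-pt}\big)^n,
\end{align*}
which converges as soon as $p>\frac{\log N}{t}$. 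Since $\phi_{t,\psi}$ is bounded, it then lies in $L^p(G)$ for every $p\ge\max\{1,\frac{\log N}{t}\}$, so $\phi_{t,\psi}\in\bigcup_{1\le p<\infty}L^p(G)$, which is the desired conclusion; as in the earlier examples, letting $t\to 0^+$ then shows $G$ has the integrable Haagerup property.

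I do not expect a serious obstacle. The computation is elementary, and the only step that is not purely formal is passing from $\int_G$ to $\sum_{v\in O}$; this relies on the compactness of $\operatorname{Stab}(x_0)$---so that the fibres of the orbit map $s\mapsto sx_0$ carry equal finite Haar measure---and on the countability of $O$---so that $\sigma$-additivity applies to the partition $G=\bigsqcup g_iH$---both of which are immediate from the hypotheses. The role of the bounded-number-of-cubes assumption is precisely to pin the growth of combinatorial balls at an exponential rate, which is exactly what makes the geometric series converge; without such control the sum over the orbit could diverge for every $p$.
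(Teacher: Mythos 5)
Your proof is correct and follows essentially the same route as the paper's: bound the degree of the $1$-skeleton via the cube-count hypothesis, count vertices on combinatorial spheres, decompose $G$ into cosets of the compact stabilizer of $x_0$, and sum a geometric series for $p$ large. The only (harmless) cosmetic differences are that you bound the degree directly by $N$ rather than by $(\text{dimension})\times(\text{cube bound})$ and that you make the countability of the orbit explicit before invoking $\sigma$-additivity.
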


\begin{proof}
	
	Let $n$ be the maximum degree of a vertex in the 1-skeleton of $X$. Then
	$$ S_k(x_0):=\{x\in X: x\text{ is a vertex, }d_c(x,x_0)=k\}$$
	contains at most $n(n-1)^{k-1}$ vertices for $k\in\N$.
	
	Let
	$$H=\text{Stab}(x_0)=\{s\in G : sx_0=x_0\}.$$
	Then $H$ is an open compact subgroup of $G$ since the action of $G$ on $X$ is metrically proper. So we may normalize the Haar measure on $G$ so that the measure of $H$ is 1. If $s,t\in G$, then $sH=tH$ if and only if $sx_0=tx_0$. So
	\begin{eqnarray*}
		&&\int_G |\phi_{1,\psi}(s)|^p\,ds\\
		&=&\sum_{\dot{s}\in G/H}\int_H e^{-p d_c(shx_0,x_0)}\,dh\\
		&=&\sum_{\dot{s}\in G/H}e^{-pd_c(sx_0,x_0)}\\
		&=& \sum_{k\geq 0}\sum_{\substack{\dot s\in G/H, \\ d_c(sx_0,x_0)=k}} e^{-pk}\\
		&\leq & \sum_{k\geq 0}|S_k(x_0)| e^{-pk}\\
		&\leq & \sum_{k\geq 0} n(n-1)^{k-1} e^{-pk}\\
		&<& \infty
	\end{eqnarray*}
	when $p>\log (n-1)$.
\end{proof}

\begin{rem}
	Let $G$ be a locally compact group equipped with a cellular, metrically proper, cobounded action on a locally finite (meaning that the number of cubes each vertex intersects is bounded) CAT(0) cube complex $X$. Then the action of $G$ on the 1-skeleton of $X$ is cobounded, implying  that the 1-skeleton of $X$ is uniformly locally finite. Consequently $X$ is finite dimensional. So if $x_0$ is a vertex of $X$ and $d_c$ denotes the path distance on the $1$-skeleton of $X$, then Proposition \ref{P:CAT(0)-Integ Haagerup prop} implies the conditionally negative definite function $\psi: G\to [0,\infty)$ defined by $\psi(s)=d_c(sx_0,x_0)$ satisfies $\phi_{t,\psi}\in\bigcup_{1\leq p< \infty}L^p(G)$ for every $t\in (0,\infty)$.
\end{rem}

\section{Characterizations of positive linear functionals}

\subsection{Kunze-Stein groups}\label{subsec:K-S}
	
	In 1960 Kunze and Stein observed that if $G=\SL$, then the convolution product
	\begin{equation}\label{Eq:Kunze-Stein Phenomenon}
	m:C_c(G)\times C_c(G) \to C_c(G) \ \ , \ \ (f,g)\mapsto f*g
	\end{equation}
	extends to a bounded bilinear map $L^q(G)\times L^2(G)\to L^2(G)$ for every $1\leq q<2$ (see \cite{KS}). Inspired by this, Cowling defined a locally compact group $G$ to be a \emph{Kunze-Stein group} if it has the \emph{Kunze-Stein phenomenon}, i.e. \eqref{Eq:Kunze-Stein Phenomenon} extends to a bounded bilinear map $L^q(G)\times L^2(G)\to L^2(G)$ for every $1\leq q<2$ (see \cite{Cow}). Cowling showed a connected semisimple Lie group with finite center has the Kunze-Stein phenomenon and, further, that an almost connected locally compact group $G$ is a Kunze-Stein group if and only if its approximating Lie groups are reductive (see \cite{Cow}). Other examples of Kunze-Stein groups include every $p$-adic semisimple Lie group (see \cite{V}) and certain automorphism groups of trees (see \cite{Neb}).
	
	Before proving results about exotic group C*-algebras of Kunze-Stein groups, we show a variation of the Kunze-Stein phenomenon holds for Kunze-Stein groups.

	\begin{prop}\label{P:Kunze-Stein Phenomenon-Lp}
		Let $G$ be a Kunze-Stein group and $1< q \leq 2$. If $p$ is the conjugate of $q$, then
		Equation \eqref{Eq:Kunze-Stein Phenomenon} extends to bounded bilinear maps
		$$ L^{q'}(G)\times L^q(G)\to L^q(G)\ \ \ \ \ \text{and}\ \ \ \ \ L^{q'}(G)\times L^p(G)\to L^p(G)$$
		for every $1\leq q'<q$.
	\end{prop}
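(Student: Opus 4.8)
The plan is to deduce this from the defining Kunze-Stein estimate $L^{q'}(G) \times L^2(G) \to L^2(G)$ by bilinear complex interpolation, using Theorem \ref{thm:bilinear-interpol}. Fix $1 < q \le 2$ with conjugate $p \ge 2$, and fix $1 \le q' < q$. The first task is to produce the map $L^{q'}(G) \times L^q(G) \to L^q(G)$. I would interpolate the two endpoint bilinear maps
$$ m: L^{q'}(G) \times L^2(G) \to L^2(G) \qquad\text{and}\qquad m: L^{q'}(G) \times L^1(G) \to L^1(G),$$
where the first is the Kunze-Stein phenomenon (valid since $q' < q \le 2$) and the second is just Young's inequality $\|f * g\|_1 \le \|f\|_1 \|g\|_1 \le \|f\|_{q'} \|g\|_1$ after restricting $f$ to, say, a fixed compact support — or more carefully, one interpolates with the first factor held in a \emph{common} space. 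The subtlety is that Theorem \ref{thm:bilinear-interpol} requires three interpolation \emph{pairs}; here the natural setup is to take $\fA_1 = \fB_1 = L^{q'}(G)$ (so the first variable is not interpolated at all, which is legitimate — the constant pair), $\fA_2 = L^2(G)$, $\fB_2 = L^1(G)$, $\fA_3 = L^2(G)$, $\fB_3 = L^1(G)$, and $\theta \in (0,1)$ chosen so that $\frac{1}{q} = \frac{1-\theta}{1} + \frac{\theta}{2}$, i.e. the complex interpolant of $(L^1, L^2)_\theta$ is $L^q$. Since $1 < q \le 2$ such a $\theta$ exists, giving a bounded bilinear map $L^{q'}(G) \times L^q(G) \to L^q(G)$ extending convolution on $C_c \times C_c$.

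For the second map $L^{q'}(G) \times L^p(G) \to L^p(G)$ with $p \ge 2$ the conjugate of $q$, I would interpolate between the Kunze-Stein endpoint $m: L^{q'}(G) \times L^2(G) \to L^2(G)$ and the $L^\infty$ endpoint $m: L^{q'}(G) \times L^\infty(G) \to L^\infty(G)$ (the latter again from Young: $\|f*g\|_\infty \le \|f\|_{q'}\|g\|_{(q')'} $ is \emph{not} quite it — one wants $\|f * g\|_\infty \le \|f\|_1 \|g\|_\infty$, so one should again use the constant pair $L^{q'}$ in the first slot and bound $\|f\|_1$ crudely, or note that on $C_c(G)$ the relevant bound holds and passes through interpolation). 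Choosing $\theta$ with $\frac{1}{p} = \frac{1-\theta}{\infty} + \frac{\theta}{2} = \frac{\theta}{2}$, i.e. $\theta = 2/p$, the interpolant $(L^\infty, L^2)_\theta = L^p$, and Theorem \ref{thm:bilinear-interpol} delivers the claimed bounded bilinear map. Alternatively — and perhaps more cleanly — the second map is simply the adjoint/transpose of the first: since convolution on $L^{q'} \times L^q \to L^q$ is bounded and $L^q$, $L^p$ are in conjugate duality, a duality argument (transposing the bilinear form, as in the computation preceding Equation \eqref{Eq:adjoint}) converts boundedness of $L^{q'} \times L^q \to L^q$ into boundedness of $L^{q'} \times L^p \to L^p$, once one is careful that it is the \emph{first} variable that stays fixed in $L^{q'}$ and that $L^{q'}$ is closed under the involution $f \mapsto f^*$ isometrically (true, since the modular function contributes a reciprocal that... actually one must check $\|f^*\|_{q'}$ relates to $\|f\|_{q'}$ — for unimodular $G$ it is immediate, and for general $G$ one restricts attention to $C_c(G)$ where the statement is really being asserted).

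The main obstacle I anticipate is bookkeeping around which space is interpolated and which is held constant, and making sure the hypotheses of Theorem \ref{thm:bilinear-interpol} (that the two bilinear maps \emph{agree} on the intersection of the first pairs times the intersection of the second pairs) are literally met — this is automatic here because in every case the map is honest convolution $(f,g) \mapsto f*g$ and the "intersection" contains $C_c(G) \times C_c(G)$, which is dense in everything in sight. A secondary point to be careful about: the Kunze-Stein phenomenon as stated gives $L^{q'} \times L^2 \to L^2$ for \emph{every} $1 \le q' < 2$, so in particular for every $1 \le q' < q$, which is exactly the range needed; there is no loss. I would present the first map's proof in full detail via bilinear interpolation and then obtain the second either by a second interpolation or, more briefly, by the duality/transpose remark, noting that the statement really concerns the extension of \eqref{Eq:Kunze-Stein Phenomenon} and so it suffices to verify the norm estimate on $C_c(G) \times C_c(G)$.
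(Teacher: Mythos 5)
There is a genuine gap: both endpoint maps you propose to interpolate from do not exist. For a noncompact Kunze--Stein group (the only nontrivial case) and $q'>1$, convolution is \emph{not} bounded as a map $L^{q'}(G)\times L^1(G)\to L^1(G)$, nor as a map $L^{q'}(G)\times L^\infty(G)\to L^\infty(G)$. Indeed, for nonnegative $f,g\in C_c(G)$ one has $\|f*g\|_{L^1(G)}=\|f\|_{L^1(G)}\|g\|_{L^1(G)}$ (Tonelli and left invariance), and taking $g$ close to the constant function $1$ gives $\|f*g\|_{L^\infty(G)}\approx\|f\|_{L^1(G)}\|g\|_{L^\infty(G)}$; so either boundedness would force $\|f\|_{L^1(G)}\leq C\|f\|_{L^{q'}(G)}$ on $C_c(G)$, which fails on any group of infinite Haar measure (test on indicator functions of sets of large measure). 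Your parenthetical fixes --- ``restrict $f$ to a fixed compact support'' or ``bound $\|f\|_1$ crudely'' --- cannot repair this: Theorem \ref{thm:bilinear-interpol} requires genuine norm bounds of the endpoint bilinear maps on the endpoint spaces, and a bound whose constant depends on the support is not such a bound. The paper's proof avoids exactly this trap by interpolating in \emph{both} variables: it chooses $\theta=2-\tfrac2q$ and $q_0=\theta/\bigl(\tfrac{1}{q'}-(1-\theta)\bigr)\in(1,2)$ so that $(L^1(G),L^{q_0}(G))_\theta=L^{q'}(G)$ and $(L^1(G),L^2(G))_\theta=L^q(G)$ simultaneously, and interpolates the two \emph{valid} maps $L^1\times L^1\to L^1$ (Young) and $L^{q_0}\times L^2\to L^2$ (Kunze--Stein, since $q_0<2$); for the second map it similarly interpolates $L^{q_0}\times L^2\to L^2$ against $L^1\times L^\infty\to L^\infty$ with parameter $1-\theta$, using $(L^{q_0},L^1)_{1-\theta}=L^{q'}$ and $(L^2,L^\infty)_{1-\theta}=L^p$.

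Your fallback for the second map via duality (transposing $\lambda_q(f)$ to $\lambda_p(f^*)$ as in Equation \eqref{Eq:adjoint}) is closer to workable, but it rests on the first map, which is where the flaw lies, and it additionally needs $\|f^*\|_{L^{q'}(G)}\asymp\|f\|_{L^{q'}(G)}$, i.e.\ control of the modular function; you flag this but do not resolve it, and ``restricting to $C_c(G)$'' again does not yield a bound in the $L^{q'}$-norm. Finally, note that the complex interpolation machinery as set up in the paper is for totally $\sigma$-finite measure spaces, so a reduction to $\sigma$-compact (open) subgroups is needed for general $G$; the paper carries this out and your write-up omits it.
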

	
	\begin{proof}
		We may assume that $q<2$ since the result is given by the definition of a Kunze-Stein group when $q=2$. We will further initially assume that $G$ is $\sigma$-compact.
		
		Let $1\leq q'<q$. Equation \eqref{Eq:Kunze-Stein Phenomenon} extends to bounded bilinear maps
\begin{align}\label{Eq:cont. conv. product}
     L^1(G)\times L^1(G)\to L^1(G)\ \ \ \ \ \text{and}\ \ \ \ \ L^{q_0}(G)\times L^2(G)\to L^2(G)
\end{align}
			for every $1\leq q_0< q$.
		Let $\theta=2-\frac{2}{q}$ and set $q_0=\frac{\theta}{\frac{1}{q'}-(1-\theta)}$.
		Then
		\begin{equation}\label{Eq:2}
		\frac{1}{q'}=\frac{1-\theta}{1}+\frac{\theta}{q_0} \ \ \ \ \ \text{and}\ \ \ \ \ \frac{1}{q}=\frac{1-\theta}{1}+\frac{\theta}{2}.
		\end{equation}
		In particular, $1<q_0<2$ and we have the complex interpolation pairs
		\begin{equation*}
		(L^{1}(G),L^{q_0}(G))_\theta=L^{q'}(G) \ \ \ , \ \ \  (L^{1}(G),L^2(G))_\theta=L^{q}(G).
		\end{equation*}
		It now follows immediately from Theorem \ref{thm:bilinear-interpol} and Equation \eqref{Eq:cont. conv. product} that Equation \eqref{Eq:Kunze-Stein Phenomenon} extends to a bounded bilinear map $L^{q'}(G){\times} L^{q}(G)\to L^{q}(G)$.
		It also follows from \eqref{Eq:2} that
		\begin{equation*}\label{Eq:3}
		\frac{1}{p}=\frac{\theta}{2}.
		\end{equation*}
		In particular, we have the complex interpolation pairs
		\begin{equation*}\label{Eq:interpolation-lp for Kunze-Stein-II}
		(L^{q_0}(G),L^1(G))_{1-\theta}=L^{q'}(G) \ \ \ , \ \ \  (L^2(G),L^{\infty}(G))_{1-\theta}=L^{p}(G).
		\end{equation*}
		Since the convolution product defines a bounded bilinear map from $L^1(G){\times} L^\infty(G)$ into $L^\infty(G)$, it follows as above that Equation \eqref{Eq:Kunze-Stein Phenomenon} extends to a bounded bilinear map $L^{q'}(G){\times} L^{p}(G)\to L^{p}(G)$.
		
		Next, we deal with the case when $G$ is not necessarily $\sigma$-compact. Suppose towards a contradiction that Equation \ref{Eq:Kunze-Stein Phenomenon} does not extend to a bounded linear map $L^{q'}(G)\times L^q(G)\to L^q(G)$. Then there exists sequences $\{f_n\}\subseteq L^{q'}(G)$ and $\{g_n\}\subseteq L^q(G)$ with $\|f_n\|_{L^{q'}(G)}\leq 1$ and $\|g_n\|\leq 1$ so that $\|g_n\|_{L^q(G)}\leq 1$ so that $\|f_n*g_n\|_{L^q(G)}\to \infty$. Since the sets $f_n^{-1}(\C\backslash\{0\})$ and $g_n^{-1}(\C\backslash \{0\})$ coincide with $\sigma$-compact sets almost everywhere for $n\in \N$, we can find an open, $\sigma$-compact subgroup $G'$ of $G$ so that $f_n^{-1}(\C\backslash\{0\})\subseteq G'$ and $g_n^{-1}(\C\backslash \{0\})\subseteq G'$ modulo null sets for $n\in \N$. Then
		$$ \|(f_n|_{G'})*(g_n|_{G'})\|_{L^q(G')}=\|f_n*g_n\|_{L^q(G)}\to \infty.$$
		This contradicts the fact Equation \ref{Eq:Kunze-Stein Phenomenon} extends to a bounded bilinear map $L^{q'}(G'){\times} L^{q}(G')\to L^{q}(G')$. The proof Equation \ref{Eq:Kunze-Stein Phenomenon} extends to a bounded bilinear map $L^{q'}(G'){\times} L^{p}(G')\to L^{p}(G')$ is similar.
	\end{proof}
	
	The following corollary is an immediate consequence of the previous proposition.
	
	\begin{cor}\label{T:Kunze-Stein-Lq embeds into convolution operators}
		Let $G$ be a Kunze-Stein group and $1\leq q \leq p \leq \infty$ with $\frac{1}{p}+\frac{1}{q}=1$. The identity map on $C_c(G)$ extends to a bounded linear map $L^{q'}(G)\to \pf^*_p(G)$ for each $1\leq q'< q$.
	\end{cor}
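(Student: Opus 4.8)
The plan is to obtain this as an immediate consequence of Proposition \ref{P:Kunze-Stein Phenomenon-Lp} together with the definition \eqref{Eq:involutive pseudofunctions-norm} of the $\pf_p^*(G)$-norm. First I would reduce to the essential case: since $q$ and $p$ are conjugate with $q\leq p$, necessarily $q\leq 2\leq p$, and the statement is vacuous when $q=1$ (there is then no $q'<q$), so one may assume $1<q\leq 2$ and $2\leq p<\infty$, which is exactly the range covered by Proposition \ref{P:Kunze-Stein Phenomenon-Lp}. Recall that for $f\in L^1(G)$,
$$ \|f\|_{\pf_p^*(G)}=\max\{\|\lambda_p(f)\|_{B(L^p(G))},\ \|\lambda_q(f)\|_{B(L^q(G))}\}, $$
and that $\lambda_r(f)g=f*g$, so that $\|\lambda_r(f)\|_{B(L^r(G))}=\sup\{\|f*g\|_{L^r(G)}:g\in C_c(G),\ \|g\|_{L^r(G)}\leq 1\}$ by density of $C_c(G)$ in $L^r(G)$ for $r<\infty$.

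Next, I would fix $1\leq q'<q$ and apply Proposition \ref{P:Kunze-Stein Phenomenon-Lp}: it yields a constant $C>0$ (depending only on $G$, $q$, $q'$) such that $\|f*g\|_{L^q(G)}\leq C\|f\|_{L^{q'}(G)}\|g\|_{L^q(G)}$ and $\|f*g\|_{L^p(G)}\leq C\|f\|_{L^{q'}(G)}\|g\|_{L^p(G)}$ for all $f,g\in C_c(G)$. Taking suprema over $g$ in the respective unit balls gives $\|\lambda_q(f)\|_{B(L^q(G))}\leq C\|f\|_{L^{q'}(G)}$ and $\|\lambda_p(f)\|_{B(L^p(G))}\leq C\|f\|_{L^{q'}(G)}$, hence $\|f\|_{\pf_p^*(G)}\leq C\|f\|_{L^{q'}(G)}$ for every $f\in C_c(G)$. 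Since $q'<\infty$, $C_c(G)$ is dense in $L^{q'}(G)$, so this bounded inclusion extends uniquely to a bounded linear map $L^{q'}(G)\to\pf_p^*(G)$, as claimed.

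There is no real obstacle in this argument: the corollary is a direct unwinding of definitions once Proposition \ref{P:Kunze-Stein Phenomenon-Lp} is in hand, with the only points worth a sentence being the reduction to the range $1<q\leq 2$ and the routine density arguments used both to compute the convolution-operator norms on $C_c(G)$ and to extend the resulting bounded map from $C_c(G)$ to all of $L^{q'}(G)$.
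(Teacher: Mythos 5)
Your proof is correct and is exactly the argument the paper intends: the paper states the corollary as an immediate consequence of Proposition \ref{P:Kunze-Stein Phenomenon-Lp}, and your unwinding (bounding both $\|\lambda_q(f)\|_{B(L^q(G))}$ and $\|\lambda_p(f)\|_{B(L^p(G))}$ by $C\|f\|_{L^{q'}(G)}$ via the two bilinear maps, taking the maximum in \eqref{Eq:involutive pseudofunctions-norm}, and extending by density of $C_c(G)$ in $L^{q'}(G)$) is precisely that deduction, including the correct reduction to $1<q\leq 2$ and the observation that $q=1$ is vacuous.
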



From the preceding results, we deduce the following characterization of those representations of $G$ that extend to $*$-representations of $C^*_{L^{p+}}(G)$ and $\CPF$. Note that the following also characterizes representations of $G$ that extend to $*$-representations of $\CLp$ for groups satisfying Conjecture \ref{conjecture} (such as groups with the integrable Haagerup property).

\begin{thm}\label{thm:KS}
	Suppose $G$ is a Kunze-Stein group and $2\leq p< \infty$. The following are equivalent for a unitary representation $\pi$ of $G$.
	\begin{enumerate}[$(i)$]
		\item $\pi$ extends to a $*$-representation of $C^*_{L^{p+}}(G)$;
		\item $\pi$ extends to a $*$-representation of $C^*(\pf_{p}^*(G))$;
		\item $B_\pi\subseteq L^{p+\epsilon}(G)$ for every $\epsilon>0$.
	\end{enumerate}
	In particular, $C^*_{L^{p+}}(G)=C^*(\pf_{p}^*(G))$. If, in addition, $G$ has the integrable Haagerup property, then
$$C^*_{L^{p}}(G)=C^*_{L^{p+}}(G)=C^*(\pf_{p}^*(G)).$$
\end{thm}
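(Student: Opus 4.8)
The plan is to establish the chain of implications $(iii)\Rightarrow(ii)\Rightarrow(i)\Rightarrow(iii)$, after which the displayed equalities follow formally. The implication $(ii)\Rightarrow(i)$ is immediate from Lemma~\ref{lem:PF-to-CLp}: a $*$-representation of $C^*(\pf_p^*(G))$ restricts to a $*$-representation of $\pf_p^*(G)$, which, being contractive, satisfies $\|\pi(f)\|\leq\|f\|_{\pf_p^*(G)}$ for all $f\in L^1(G)$; but Lemma~\ref{lem:PF-to-CLp} (combined with right-continuity, i.e.\ the Corollary after Proposition~\ref{prop:continuous}) gives $\|f\|_{C^*_{L^{p+}}(G)}\leq \inf_{\epsilon>0}\|f\|_{\pf^*_{p+\epsilon}(G)}=\|f\|_{\pf_p^*(G)}$, so $\pi$ extends to $C^*_{L^{p+}}(G)$. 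The implication $(i)\Rightarrow(iii)$ is also soft: if $\pi$ extends to a $*$-representation of $C^*_{L^{p+}}(G)$, then $B_\pi\subseteq C^*_{L^{p+}}(G)^*=\bigcap_{\epsilon>0}B_{L^{p+\epsilon}}(G)$, and every element of $B_{L^{p+\epsilon}}(G)$ that is a matrix coefficient lies in $L^{p+\epsilon}(G)$; more carefully, one uses that $B_{L^{p+\epsilon}}(G)$ consists of (spans of) matrix coefficients of $L^{p+\epsilon}$-representations, together with the fact that a cyclic representation whose cyclic matrix coefficient is in $L^r(G)$ has all matrix coefficients in $L^r(G)$ after taking the appropriate dense subspace — so $B_\pi\subseteq L^{p+\epsilon}(G)$ for all $\epsilon>0$.

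The substantive step is $(iii)\Rightarrow(ii)$, and this is where Corollary~\ref{T:Kunze-Stein-Lq embeds into convolution operators} enters. Assume $B_\pi\subseteq L^{p+\epsilon}(G)$ for every $\epsilon>0$; I want to show $\|\pi(f)\|\leq\|f\|_{\pf_p^*(G)}$ for every $f\in L^1(G)$, which by the Corollary after Proposition~\ref{prop:continuous} is equivalent to $\|\pi(f)\|\leq\|f\|_{\pf^*_{p+\epsilon}(G)}$ for every $\epsilon>0$. Fix $\epsilon>0$ and let $p'=p+\epsilon$ with conjugate exponent $q'$, so $1<q'<2$. By Lemma~\ref{lem:CHH} applied to a dense subspace $H_0$ of $H_\pi$ on which all diagonal matrix coefficients $\pi_{\xi,\xi}$ lie in $L^{p'}(G)$ (such $H_0$ exists since $B_\pi\subseteq L^{p'}(G)$ — take e.g.\ the span of a cyclic family), we have for $f\in C_c(G)$
\[
\|\pi(f)\|=\sup_{\xi\in H_0}\liminf_{n\to\infty}\Big(\int_G (f^**f)^{(*n)}(s)\,\pi_{\xi,\xi}(s)\,ds\Big)^{\frac{1}{2n}}.
\]
Here is the key estimate: I want to bound $\|(f^**f)^{(*n)}\|_{L^{q'}(G)}$ using the Kunze--Stein-type convolution bound. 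Write $h=f^**f\in C_c(G)$. Choose $q''$ with $q'<q''<2$; by Corollary~\ref{T:Kunze-Stein-Lq embeds into convolution operators}, the identity on $C_c(G)$ extends to a bounded map $L^{q'}(G)\to\pf_{q''}^*(G)$, hence in particular $\lambda_{q''}$ is bounded on $L^{q'}(G)$-norm; more directly, Proposition~\ref{P:Kunze-Stein Phenomenon-Lp} gives that convolution is bounded $L^{q'}(G)\times L^{q''}(G)\to L^{q''}(G)$ — but I actually want iterated convolution staying in a fixed $L^r$. The cleanest route: pick $q_0\in(q',2)$; then $L^{q'}$-convolution acts boundedly on $L^{q_0}(G)$ by Proposition~\ref{P:Kunze-Stein Phenomenon-Lp}, so $\|h^{(*n)}\|_{L^{q_0}(G)}\leq \|\lambda_{q_0}(h)\|^{n-1}_{B(L^{q_0})}\,\|h\|_{L^{q_0}(G)}$ with $\|\lambda_{q_0}(h)\|_{B(L^{q_0})}\leq C\|h\|_{L^{q'}(G)}$; then interpolate/estimate $\|h^{(*n)}\|_{L^{q'}}$ against $\|h^{(*n)}\|_{L^{q_0}}$ and $\|h^{(*n)}\|_{L^1}$ — but $L^1$ is not controlled. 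The correct fix, mirroring Lemma~\ref{lem:PF-to-CLp}'s proof, is to peel off one factor: $\|h^{(*n)}\|_{L^{q'}(G)}\leq \|\lambda_{q'}(h^{(*(n-1))})\|_{B(L^{q'}(G))}\|h\|_{L^{q'}(G)}$ and then bound $\|\lambda_{q'}(h^{(*(n-1))})\|_{B(L^{q'})}\leq\|\lambda_{q'}(h)\|^{n-1}_{B(L^{q'})}$. Thus, using $\int_G h^{(*n)}(s)\pi_{\xi,\xi}(s)\,ds\leq\|h^{(*n)}\|_{L^{q'}(G)}\|\pi_{\xi,\xi}\|_{L^{p'}(G)}$ and taking $2n$-th roots and $n\to\infty$,
\[
\|\pi(f)\|\leq\|\lambda_{q'}(f^**f)\|_{B(L^{q'}(G))}^{1/2}=\|\lambda_{q'}(f^*)\lambda_{q'}(f)\|^{1/2}_{B(L^{q'})}\leq\|\lambda_{q'}(f^*)\|^{1/2}\|\lambda_{q'}(f)\|^{1/2}=\|\lambda_p(f)\cdots\|,
\]
and by Equation~\eqref{Eq:adjoint} this last quantity is $\|\lambda_{p'}(f)\|^{1/2}_{B(L^{p'})}\|\lambda_{q'}(f)\|^{1/2}_{B(L^{q'})}\leq\|f\|_{\pf^*_{p'}(G)}$, exactly as in Lemma~\ref{lem:PF-to-CLp}. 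Since this holds for all $\epsilon>0$ and $C_c(G)$ is dense, $\|\pi(f)\|\leq\|f\|_{\pf_p^*(G)}$ for all $f\in L^1(G)$, giving $(ii)$.

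Finally, the concluding equalities: we have shown $C^*_{L^{p+}}(G)$ and $C^*(\pf_p^*(G))$ have the same $*$-representations, hence are canonically $*$-isomorphic. If in addition $G$ has the integrable Haagerup property, then by Proposition~\ref{P:Int Hag Prop-cts Cp-alg norm} we have $C^*_{L^p}(G)=C^*_{L^{p+}}(G)$, and the triple equality follows. The step I expect to be the main obstacle is the uniform-in-$n$ control of $\|(f^**f)^{(*n)}\|_{L^{q'}(G)}$: one must be careful that the Kunze--Stein-type bound from Proposition~\ref{P:Kunze-Stein Phenomenon-Lp} is applied so that iterated convolutions stay in a \emph{fixed} $L^{q'}$-space with the operator norm $\|\lambda_{q'}(f^**f)\|_{B(L^{q'})}$ as the controlling constant, rather than drifting between exponents; the resolution is precisely the one-factor peeling argument used in the proof of Lemma~\ref{lem:PF-to-CLp}, now available because $\lambda_{q'}(f^**f)$ is a genuine bounded operator on $L^{q'}(G)$ by the Kunze--Stein phenomenon for $1<q'<2$.
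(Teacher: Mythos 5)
Your cycle $(iii)\Rightarrow(ii)\Rightarrow(i)\Rightarrow(iii)$ breaks at two of the three links, and the break occurs precisely where the Kunze--Stein hypothesis has to do its work. For $(ii)\Rightarrow(i)$ you argue from $\|\pi(f)\|\leq\|f\|_{\pf_p^*(G)}$ together with $\|f\|_{C^*_{L^{p+}}(G)}\leq\|f\|_{\pf_p^*(G)}$, but this inequality points the wrong way: it shows the identity extends to a quotient $C^*(\pf_p^*(G))\to C^*_{L^{p+}}(G)$, so representations pull back from $C^*_{L^{p+}}(G)$ to $C^*(\pf_p^*(G))$, i.e.\ it gives $(i)\Rightarrow(ii)$ (which is exactly how the paper uses Lemma \ref{lem:PF-to-CLp} and Proposition \ref{prop:continuous}), not $(ii)\Rightarrow(i)$. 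Knowing $\|\pi(f)\|\leq\|f\|_{\pf_p^*(G)}$ does not bound $\|\pi(f)\|$ by the a priori smaller norm $\|f\|_{C^*_{L^{p+}}(G)}$; proving that it does is essentially the content of the theorem and cannot be ``immediate.'' Likewise $(i)\Rightarrow(iii)$ is not soft: your justification rests on the claim that a matrix coefficient lying in $B_{L^{p+\epsilon}}(G)$ lies in $L^{p+\epsilon}(G)$, which is false in general (for an amenable noncompact group the constant function $1$ lies in every $B_{L^p}(G)$; even for the regular representation, coefficients $\lambda_{\xi,\xi}$ with $\xi\in L^2$ need not lie in any $L^r$, only those from the dense subspace do). Passing from ``dense subspace of coefficients in $L^{p+\epsilon}$'' to ``all of $B_\pi\subseteq L^{p+\epsilon}$'' is exactly the nontrivial point.

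The paper closes this gap with Corollary \ref{T:Kunze-Stein-Lq embeds into convolution operators}: for a Kunze--Stein group, $L^{q'}(G)$ embeds continuously and densely into $\pf_p^*(G)$ for every $q'<q$, whence
$$C^*(\pf_p^*(G))^*\subseteq\pf_p^*(G)^*\subseteq L^{p'}(G)\quad\text{for every }p'>p,$$
which is the implication $(ii)\Rightarrow(iii)$; combined with the trivial $(iii)\Rightarrow(i)$ and the quotient argument for $(i)\Rightarrow(ii)$, the cycle closes. In your writeup the Kunze--Stein input (Proposition \ref{P:Kunze-Stein Phenomenon-Lp}/Corollary \ref{T:Kunze-Stein-Lq embeds into convolution operators}) is invoked only inside your $(iii)\Rightarrow(ii)$ argument, where it is not needed at all: that step is just the H\"older-plus-peeling computation of Lemmas \ref{lem:Okayasu} and \ref{lem:PF-to-CLp} (Young's inequality already makes $\lambda_{q'}(f^**f)$ bounded on $L^{q'}(G)$ for $f\in C_c(G)$), followed by Proposition \ref{prop:continuous} to let $\epsilon\to0$; it holds for arbitrary locally compact groups. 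So the one place where the geometry of $G$ must enter is exactly the place your proof treats as formal, and the duality argument through the embedding $L^{q'}(G)\hookrightarrow\pf_p^*(G)$ is the missing idea.
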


\begin{proof}
	$(i) \Longrightarrow (ii)$: Let $f\in L^1(G)$. Then, by Lemma \ref{lem:PF-to-CLp} and Proposition \ref{prop:continuous},
	$$ \|f\|_{C^*_{L^{p+}}(G)}=\lim_{p'\to p^+}\|f\|_{C^*_{L^{p'}}(G)}\leq \lim_{p'\to p^+}\|f\|_{\pf_{p'}^*(G)}=\|f\|_{\pf^*_p(G)}.$$
	Thus the identity map on $L^1(G)$ extends to a $*$-homomorphism $C^*(\pf^*_p(G))\to C^*_{L^{p+}}(G)$.
	
	$(ii) \Longrightarrow (iii)$: Let $q$ be the conjugate of $p$. Then $L^{q'}(G)$ embeds continuously and densely inside $\pf^*_p(G)$ for every $q'<q$ by Corollary \ref{T:Kunze-Stein-Lq embeds into convolution operators}. Hence,
	$$ C^*(\pf^*_p(G))^*\subseteq \pf^*_p(G)^*\subseteq L^{p'}(G)$$
	for every $p'>p$.
	
	$(iii) \Longrightarrow (i)$: This is clear since $\pi$ is an $L^{p+\epsilon}$-representation for every $\epsilon>0$.

The final result follows from Proposition \ref{P:Int Hag Prop-cts Cp-alg norm}.
\end{proof}

\begin{rem}
	Since a positive definite function $\phi$ of $G$ is an element of $B_{L^p}(G)$ whenever $\phi\in L^p(G)$ (see \cite{BG}), the preceding theorem immediately implies $\phi$ extends to a positive linear functional on $C^*_{L^{p+}}(G)$ if and only if $\phi$ extends to a positive linear functional on $C^*(\pf^*_p(G))$ if and only if $\phi\in\bigcap_{\epsilon>0} L^{p+\epsilon}(G)$ when $G$ is a Kunze-Stein group and $2\leq p<\infty$.
\end{rem}

As an immediate consequence of Theorem \ref{thm:KS}, we give a near solution to the conjecture of Cowling discussed in the introduction (see Conjecture \ref{conj:Cow}).

\begin{cor}\label{cor:near-soln}
	Let $G$ be a Kunze-Stein group, $\pi: G\to B(H_\pi)$ a unitary representation of $G$ with cyclic vector $\xi\in H_\pi$, and $2<p<\infty$. If $\pi_{\xi,\xi}\in L^{p+\epsilon}(G)$ for every $\epsilon>0$, then
	$$ B_{\pi}\subseteq L^{p+\epsilon}(G)$$
	for all $\epsilon>0$ This, in particular, holds if $\pi_{\xi,\xi}\in L^{p}(G)$.
\end{cor}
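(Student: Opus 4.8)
The plan is to read this off from Theorem~\ref{thm:KS}, via the implication $(i)\Rightarrow(iii)$. The only point that needs checking is that the stated hypothesis, together with cyclicity of $\xi$, forces $\pi$ to extend to a $*$-representation of $C^*_{L^{p+}}(G)$; everything past that is contained in Theorem~\ref{thm:KS}.

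First I would record the elementary translation identity: for all $s,t,u\in G$,
$$ \pi_{\pi(s)\xi,\pi(t)\xi}(u)=\lla\pi(u)\pi(s)\xi,\pi(t)\xi\rra=\lla\pi(t^{-1}us)\xi,\xi\rra=\pi_{\xi,\xi}(t^{-1}us),$$
so every matrix coefficient between vectors in $\pi(G)\xi$ is a two-sided translate of $\pi_{\xi,\xi}$. Since left translation is an isometry of $L^r(G)$ and right translation maps $L^r(G)$ into itself for every $r\in[1,\infty)$, each such coefficient lies in $L^{p+\epsilon}(G)$ whenever $\pi_{\xi,\xi}$ does, i.e.\ for every $\epsilon>0$. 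Passing to finite linear combinations, the subspace $H_0:=\mathrm{span}\{\pi(s)\xi:s\in G\}$, which is dense because $\xi$ is cyclic, satisfies $\pi_{\eta,\eta}\in L^{p+\epsilon}(G)$ for all $\eta\in H_0$ and all $\epsilon>0$. Hence $\pi$ is an $L^{p+\epsilon}$-representation of $G$ for every $\epsilon>0$, so by definition $\|\pi(f)\|\le\|f\|_{C^*_{L^{p+}}(G)}$ for every $f\in L^1(G)$; that is, $\pi$ satisfies condition $(i)$ of Theorem~\ref{thm:KS}.

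Applying $(i)\Rightarrow(iii)$ of Theorem~\ref{thm:KS} then gives $B_\pi\subseteq L^{p+\epsilon}(G)$ for all $\epsilon>0$, which is the assertion. For the final sentence I would simply observe that $\pi_{\xi,\xi}$ is bounded, with $|\pi_{\xi,\xi}(s)|\le\|\xi\|^2$, so $\pi_{\xi,\xi}\in L^p(G)$ already forces $\pi_{\xi,\xi}\in L^{p+\epsilon}(G)$ for every $\epsilon>0$, reducing to the case just treated. I do not anticipate a genuine obstacle: the substance is entirely in Theorem~\ref{thm:KS}, and the only manual step is the translation computation above. The one point that warrants a moment's care is that an $L^{p+\epsilon}$-representation only requires a \emph{dense} subspace of vectors with integrable diagonal coefficients, so cyclicity of $\xi$ is precisely what is used to produce that subspace from the single hypothesis on $\pi_{\xi,\xi}$.
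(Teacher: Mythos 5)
Your proposal is correct and follows essentially the same route as the paper: show $\pi$ is an $L^{p+\epsilon}$-representation for every $\epsilon>0$ via cyclicity of $\xi$, so that $\pi$ extends to $C^*_{L^{p+}}(G)$, then invoke the equivalence $(i)\Leftrightarrow(iii)$ of Theorem~\ref{thm:KS}, and dispose of the last claim by boundedness of $\pi_{\xi,\xi}$. Your explicit translation computation for coefficients between vectors of $\mathrm{span}\{\pi(s)\xi\}$ just fills in a detail the paper leaves implicit.
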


\begin{proof}
	Observe that $\pi$ is an $L^{p+\epsilon}$-representation of $G$ for every $\epsilon>0$ since
	$$H_0:=\{\pi(s)\xi : s\in G\}$$
	is a dense subspace of $H_\pi$ satisfying $\pi_{\eta,\eta}\in L^{p+\epsilon}(G)$ for every $\eta\in H_0$ and $\epsilon>0$. Hence, $\pi$ extends to a $*$-representation of $C^*_{L^{p+}}(G)$. So the result follows by the equivalence of $(i)$ and $(iii)$ in  Theorem \ref{thm:KS}. Finally, we note that since $\pi_{\xi,\xi}$ is a bounded function, the assumption of $\pi_{\xi,\xi}\in L^{p}(G)$ implies that $\pi_{\xi,\xi}\in L^{p+\epsilon}(G)$ for every $\epsilon>0$.
\end{proof}

Recall that if $\pi_r$ denotes the complementary series of $\SL$ ($-1<r<0$), then $\pi_r\to 1_G$ in the Fell topology as $r\to 0^-$ and $A_{\pi_r}\subset L^p(\SL)$ for $2< p\leq \infty$ if and only if $r\in (\frac{2}{p}-1,0)$ (see \cite[Theorem 10]{KS}). Hence, $\SL$ has the integrable Haagerup property and we immediately deduce the following characterization of the second author alluded to in the introduction and from which Theorem \ref{thm:W} is an immediate consequence.

\begin{cor}[Wiersma {\cite[Lemma 7.1]{W-Fourier}}]
	The complementary representation $\pi_r$ ($-1<r<0$) extends to a $*$-representation of $\CLp$ if and only if $r\in [\frac{2}{p}-1,0)$.
\end{cor}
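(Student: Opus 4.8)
The plan is to read the corollary off from Theorem \ref{thm:KS} together with the classical computation of Kunze and Stein on the $L^s$-integrability of the complementary series. First I would verify that $\SL$ satisfies the hypotheses of Theorem \ref{thm:KS}: being a connected semisimple Lie group with finite centre it is a Kunze-Stein group, and it has the integrable Haagerup property since the normalized positive definite matrix coefficients of the $\pi_r$ converge to $1$ uniformly on compact sets as $r\to 0^-$ and each of them lies in $\bigcup_{1\le s<\infty}L^s(\SL)$ (precisely the observation recorded just before the corollary). Theorem \ref{thm:KS} then gives, for $2\le p<\infty$, that $C^*_{L^p}(\SL)=C^*_{L^{p+}}(\SL)$ and that a unitary representation $\pi$ of $\SL$ extends to a $*$-representation of $C^*_{L^p}(\SL)$ if and only if $B_\pi\subseteq L^{p+\epsilon}(\SL)$ for every $\epsilon>0$. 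Thus the whole problem collapses to deciding, for the representations $\pi_r$, when this family of integrability conditions holds; the case $p=\infty$ is trivial since then $C^*_{L^\infty}(\SL)=C^*(\SL)$ and $[\tfrac 2p-1,0)=[-1,0)\supseteq(-1,0)$.

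For the forward implication, I would suppose $\pi_r$ extends to a $*$-representation of $C^*_{L^p}(\SL)$. Then $A_{\pi_r}\subseteq B_{\pi_r}\subseteq L^{p+\epsilon}(\SL)$ for every $\epsilon>0$, so by \cite[Theorem 10]{KS} we must have $r>\tfrac{2}{p+\epsilon}-1$ for every $\epsilon>0$; letting $\epsilon\to 0^+$ yields $r\ge\tfrac 2p-1$, which together with the standing constraint $-1<r<0$ gives $r\in[\tfrac 2p-1,0)$.

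Conversely, if $r\in[\tfrac 2p-1,0)$, then for every $\epsilon>0$ one has $\tfrac{2}{p+\epsilon}-1<\tfrac 2p-1\le r<0$, so $r$ lies in the open interval $(\tfrac{2}{p+\epsilon}-1,0)$ and hence $A_{\pi_r}\subset L^{p+\epsilon}(\SL)$ by \cite[Theorem 10]{KS}. In particular every matrix coefficient of $\pi_r$ belongs to $L^{p+\epsilon}(\SL)$, so $\pi_r$ is an $L^{p+\epsilon}$-representation of $\SL$ for every $\epsilon>0$; since $C^*_{L^{p+}}(\SL)=C^*_{L^p}(\SL)$ by the integrable Haagerup property (Proposition \ref{P:Int Hag Prop-cts Cp-alg norm}), $\pi_r$ extends to a $*$-representation of $C^*_{L^p}(\SL)$.

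I do not anticipate a real obstacle here: all of the analytic content — the Kunze-Stein phenomenon and the integrable Haagerup property of $\SL$ — is already packaged into Theorem \ref{thm:KS} and the quoted Kunze-Stein estimate, and what survives is a two-sided limiting argument in the parameter $r$. The only point meriting attention is that Theorem \ref{thm:KS} speaks of $B_{\pi_r}$ while the Kunze-Stein estimate is phrased for $A_{\pi_r}$; but the inclusion $A_{\pi_r}\subseteq B_{\pi_r}$ is all that is needed in the forward direction, and in the converse direction one produces an $L^{p+\epsilon}$-representation directly from honest matrix coefficients, so the mismatch is immaterial. (It is reassuring that for $p=2$ the interval $[\tfrac 2p-1,0)$ is empty, consistent with the complementary series failing to be weakly contained in the regular representation, i.e. with $C^*_{L^2}(\SL)=C^*_r(\SL)$.)
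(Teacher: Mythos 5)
Your proof is correct and follows essentially the same route as the paper, which likewise deduces the corollary immediately from Theorem \ref{thm:KS} (with $\SL$ being a Kunze--Stein group having the integrable Haagerup property, witnessed by the complementary series coefficients) combined with the Kunze--Stein criterion $A_{\pi_r}\subset L^{s}(\SL)$ if and only if $r\in(\tfrac{2}{s}-1,0)$. Your $\epsilon\to 0^{+}$ limiting step, and the observation that only $A_{\pi_r}\subseteq B_{\pi_r}$ is needed in the forward direction while the converse produces an $L^{p+\epsilon}$-representation directly, is exactly the intended argument; no gaps.
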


As a further consequence of Theorem 5.3, we provide a simple condition for non-amenable Kunze-Stein groups $G$ which guarantees the C*-algebra $C^*_{L^p}(G)$ are pairwise distinct for $2\leq p\leq \infty$.

\begin{cor}\label{cor:KS-distinct}
	Let $G$ be a non-amenable Kunze-Stein group. If $G$ admits a conditionally negative definite function $\psi: G\to [0,\infty)$ so that $\phi_{t,\psi}\in \bigcup_{1\leq p< \infty} L^p(G)$ for every $t\in (0,\infty)$, then the canonical quotient
	$$ C^*_{L^p}(G)\to C^*_{L^{p'}}(G)$$
	is not injective for $2\leq p'<p\leq \infty$.
\end{cor}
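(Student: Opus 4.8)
The plan is, for each pair $2\le p'<p\le\infty$, to produce a positive definite function lying in $B_{L^p}(G)\setminus B_{L^{p'}}(G)$. Since $B_{L^p}(G)=C^*_{L^p}(G)^*$ and $B_{L^{p'}}(G)=C^*_{L^{p'}}(G)^*$ as weak$^*$-closed subspaces of $B(G)$, a strict inclusion $B_{L^{p'}}(G)\subsetneq B_{L^p}(G)$ forces the canonical surjection $C^*_{L^p}(G)\to C^*_{L^{p'}}(G)$ to be non-injective, because an injective surjective $*$-homomorphism of C$^*$-algebras is an isomorphism. The test functions will be the $\phi_{t,\psi}=e^{-t\psi}$ ($t>0$), which are continuous positive definite by Schoenberg's theorem; by hypothesis each lies in $\bigcup_{1\le r<\infty}L^r(G)$, and they converge to $1$ uniformly on compact subsets of $G$ as $t\to0^+$, so in particular $G$ has the integrable Haagerup property and Theorem~\ref{thm:KS} (together with the remark following it) is available.

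First I would set up the bookkeeping. For a bounded positive definite function $\phi$, write $\theta_\phi:=\inf\{r>0:\phi\in L^r(G)\}$; boundedness gives $\phi\in L^r(G)$ for every $r>\theta_\phi$, so the remark after Theorem~\ref{thm:KS} (which applies since $G$ is Kunze--Stein with the integrable Haagerup property) yields
\[
\phi\in B_{L^p}(G)\iff\phi\in\bigcap_{\epsilon>0}L^{p+\epsilon}(G)\iff\theta_\phi\le p\qquad(2\le p<\infty),
\]
while $B_{L^\infty}(G)=B(G)$ contains every positive definite function. Next, for any fixed $t_0>0$ the pointwise identity $\phi_{t,\psi}=\phi_{t_0,\psi}^{\,t/t_0}$ gives $\int_G|\phi_{t,\psi}|^r\,ds=\int_G|\phi_{t_0,\psi}|^{rt/t_0}\,ds$, hence $\theta_{\phi_{t,\psi}}=(t_0/t)\,\theta_{\phi_{t_0,\psi}}$; thus $c:=t\,\theta_{\phi_{t,\psi}}$ is independent of $t$, and $c<\infty$ by hypothesis.

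The hard part will be showing $c>0$, i.e. that the functions $\phi_{t,\psi}$ have a genuine integrability gap. Suppose instead $c=0$; then $\theta_{\phi_{t,\psi}}=0$ for every $t$, so $\phi_{t,\psi}\in\bigcap_{\epsilon>0}L^{2+\epsilon}(G)$ and therefore $\phi_{t,\psi}\in B_{L^2}(G)=C^*_r(G)^*$ for every $t$, by the equivalence above with $p=2$. But $\{\phi_{t,\psi}\}$ is bounded in $B(G)$ (each has norm $\phi_{t,\psi}(e)\le1$) and converges to $1$ uniformly on compacta, hence converges to $1$ in the weak$^*$ topology $\sigma(B(G),C^*(G))$; as $C^*_r(G)^*$ is weak$^*$-closed in $B(G)$, this forces $1\in C^*_r(G)^*$, i.e. $G$ amenable by Hulanicki's theorem, contradicting the hypothesis. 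Hence $c>0$, so $t\mapsto\theta_{\phi_{t,\psi}}=c/t$ is a continuous decreasing bijection of $(0,\infty)$ onto itself.

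To finish, fix $2\le p'<p\le\infty$ and choose $t>0$ with $p'<\theta_{\phi_{t,\psi}}\le p$ — possible since $t\mapsto\theta_{\phi_{t,\psi}}$ attains every value in $(0,\infty)$ and the condition ``$\le p$'' is vacuous when $p=\infty$. Put $\phi:=\phi_{t,\psi}$. Then $\theta_\phi\le p$ gives $\phi\in B_{L^p}(G)$ (trivially if $p=\infty$), while $\theta_\phi>p'$ gives $\phi\notin B_{L^{p'}}(G)$; hence $B_{L^{p'}}(G)\subsetneq B_{L^p}(G)$ and the canonical quotient $C^*_{L^p}(G)\to C^*_{L^{p'}}(G)$ is not injective. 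Apart from the integrability-gap step $c>0$, every ingredient is an immediate consequence of Theorem~\ref{thm:KS}, the scaling behaviour of $e^{-t\psi}$, and weak$^*$-closedness of $C^*_r(G)^*$ inside $B(G)$.
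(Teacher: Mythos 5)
Your proof is correct and follows essentially the same route as the paper: use the scale $\phi_{t,\psi}=e^{-t\psi}$, invoke nonamenability to produce an integrability gap (your $c=t\,\theta_{\phi_{t,\psi}}>0$ is the paper's choice of $t_0$ with $\phi_{t_0,\psi}\notin B_r(G)$ and $p_0=\inf\{p:\phi_{t_0,\psi}\in L^p(G)\}>2$), and then apply the characterization from Theorem \ref{thm:KS} to exhibit a positive definite function in $B_{L^p}(G)\setminus B_{L^{p'}}(G)$. If anything, you spell out the weak$^*$-closedness argument for the gap more explicitly than the paper does.
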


\begin{proof}
The case of $p=\infty$ follows from Corollary \ref{C:gen properties-Full c-alg of symm conv}, parts (c) and (d). Now suppose that $p<\infty$. Since $G$ is not amenable, there exists $t_0$ so that $\phi_{t_0,\psi}\not\in B_r(G)$. Set
	$$ p_0:=\inf\{p : \phi_{t_0,\psi}\in L^p(G)\}>2.$$
	Then
	$$\phi_{t,\psi}=(\phi_{t_0,\psi})^{\frac{t}{t_0}}\in\bigcap_{\epsilon>0}L^{p_0+\epsilon}(G)$$
	if and only if $t\geq t_0$. It follows that
	$$ \phi_{t,\psi}\in \bigcap_{\epsilon>0}L^{p+\epsilon}(G)$$
	for $2\leq p<\infty$ if and only if $t\geq \frac{p_0t_0}{p}.$
	As $G$ has the integrable Haagerup property, Theorem \ref{thm:KS} implies $\phi_{t,\psi}=(\phi_{t_0,\psi})^{\frac{t_0}{t}}$ extends to a positive linear functional on $\CLp$ if and only if $p\geq \frac{p_0 t_0}{t}$. This completes the proof.
\end{proof}

\begin{example}
	If $G=\text{SU}(n,1)$ or $G=\text{SO}(n,1)$ for $n\geq 1$, then the canonical quotient
	$$ C^*_{L^p}(G)\to C^*_{L^{p'}}(G)$$
	is not injective for $2\leq p'<p\leq \infty$ by Example \ref{Ex:SU-SO-integrableHaagerup} and Corollary \ref{cor:KS-distinct}.
\end{example}

\begin{example}
	Let $T_d$ denote the homogeneous tree of degree $d\geq 3$. Nebbia shows in \cite{Neb} that if $G$ is a closed subgroup of $\text{Aut}\, T_d$ and $G$ acts transitively on the boundary of $T_d$, then $G$ is a Kunze-Stein group. If $G$ is also non-compact (hence, nonamenable), then Proposition \ref{P:CAT(0)-Integ Haagerup prop} and Corollary \ref{cor:KS-distinct} imply the canonical quotient
	$$ C^*_{L^p}(G)\to C^*_{L^{p'}}(G)$$
	is not injective for $2\leq p'<p\leq \infty$. In particular, this is true for $G=\text{Aut}\,T_d$ for $d\geq 3$ and for $G=\text{SL}(2,\kappa)$ when $\kappa$ is a non-Archimedean local field (see \cite{Serre}).
\end{example}

\subsection{Groups with rapid decay property}\label{S:Groups with RD}

A length function of a locally compact group $G$ is a symmetric, Borel function $\mc L: G\to [0,\infty)$ so that $\mc L(e)=0$ and $\mc L(st)\leq \mc L(s)+\mc L(t)$ for all $s,t\in G$. The most common example of a length function are the word length function associated to a symmetric generating set for a compactly generated group, but there are other examples as well. For example, if $G$ is a locally compact group acting isometrically on a metric space $(X,d)$ and $x_0\in X$, then
$$ G\ni s\mapsto d(sx_0,x_0)$$
is a length function.

Given a length function $\mc L$ and $d \geq 0$, the {\it polynomial weight associated to $\fL$ with degree $d$} is defined by
\begin{equation*}
\om_d(s)=(1+\fL(s))^d
\end{equation*}
for all $s\in G$.
For every $p\in [1,\infty)$, we define the weighted $L^p$-space
\begin{equation*}
L^p(G, \om_d):=L^p(G, \om_{pd} d\nu),
\end{equation*}
where $\nu$ is the left Haar measure on $G$. We also denote the norm on $L^p(G, \om_d)$ by $\|\cdot\|_{L^p(\omega_d)}$, i.e.
\begin{equation*}
\|f\|_{L^p(G, \om_d)}=\|f\om_d\|_{L^p(G)} \ \ \ \ \ (f\in L^p(G,\om_d)).
\end{equation*}

Suppose $\fL$ is a length function on $G$. We say $G$ has {\it rapid decay with respect to} $\mc L$ (or $(G,\fL)$ has rapid decay) if, for some $d>0$,
\begin{equation*}
L^2(G,\om_d)\subseteq C^*_r(G),
\end{equation*}
i.e., the canonical inclusion $C_c(G)\subseteq C^*_r(G)$ extends to a bounded linear map $L^2(G,\omega_d)\to C^*_r(G)$.
In this case, as in \cite{Nica}, we define {\it rapid-decay degree} of $G$ with respect to $\mc L$
to be
\begin{equation*}
rd(G):=\inf \{d: L^2(G,\om_d)\subseteq C^*_r(G) \}.
\end{equation*}
Clearly, $(G,\mc L)$ has rapid decay if and only if $rd(G)<\infty$. All groups with rapid decay are unimodular (see \cite{JS}) and, further, an amenable group has rapid decay if and only if it is of polynomial growth \cite{Joli}. In the case where $G$ has polynomial growth, it is known that $rd(G)=d(G)/2$, where $d(G)$ is the degree of the growth of $G$. It is further known that $1\leq rd(F_n)\leq 3/2$ for every $n\in \N$ with respect to the word length (see \cite[Section 4]{Nica}).\\

Suppose $G$ is a compactly generated locally compact group with compact symmetric generating neighbourhood $V$ of $e\in G$. If $\mc L$ is any locally bounded length function on $G$, then there exists $c>0$ so that
$$ \mc L_V(s)\geq  c \mc L(s)$$
for all $s\in G$. In particular, $G$ has the rapid decay property with respect to $\mc L_V$ whenever $G$ has the rapid decay property with respect to some locally bounded length function $\mc L$. Because of this, we say a compactly generated locally compact group $G$ has the \emph{rapid decay property} if $G$ has the rapid decay property with respect to some (equivalently, any) word length function $\mc L_V$ associated to a compact, symmetric neighbourhood $V$ of $e\in G$ that generates $G$.

Suppose $(G,\mc L)$ has the rapid decay property. It is well-known that for any $d>0$ satisfying
$L^2(G,\om_d)\subseteq C^*_r(G)$, the convolution product extends to a bounded operator on $L^2(G,\om_d)$, i.e. up to a renorming, $L^2(G,\om_d)$ is a Banach algebra under convolution product.
In the following Theorem, we present a generalization of this result to certain weighted $L^q(G)$ spaces when $1\leq q\leq 2$. A partial result similar to ours was obtained in \cite[Proposition 4.6]{LY} with a different method.

\begin{thm}\label{T:RD-weighted Lq embeds in convolution operators}
Suppose $(G,\mc L)$ has the rapid decay property and $1\leq q \leq p \leq \infty$ with $1/p+1/q=1$ and $d>rd(G)$. Then the convolution product on $C_c(G)$ extends to a bounded multiplication on $L^q(G,\om_\frac{2d}{p})$ and, further, $L^q(G,\om_\frac{2d}{p})\subseteq \pf^*_p(G)$.
\end{thm}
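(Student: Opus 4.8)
The plan is to deduce both assertions from bilinear complex interpolation (Theorem~\ref{thm:bilinear-interpol}) applied to the convolution map, interpolating between the two endpoint situations $p=\infty$ (where everything is Young's inequality) and $p=2$ (where everything is the rapid decay hypothesis). The interpolation parameter is $\theta=\tfrac2p$, and the one computation underpinning the whole argument is the identification of complex interpolation spaces of weighted $L^p$-spaces coming from \eqref{Eq:interpolation-weigh lp}--\eqref{Eq:interpolation-weigh lp-relations}: since $\tfrac1q=\tfrac{1-\theta}{1}+\tfrac\theta2=1-\tfrac1p$, one gets
$$\big(L^1(G),\,L^2(G,\om_d)\big)_\theta=L^q\!\big(G,\om_{\frac{2d}{p}}\big)\qquad\text{and}\qquad\big(L^\infty(G),\,L^2(G)\big)_\theta=L^p(G),$$
the point being that the weight produced in the first identity is $(1+\mc L)^{2dq\theta/2}=(1+\mc L)^{2dq/p}$, which is exactly the density defining $L^q(G,\om_{\frac{2d}{p}})$. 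For $p\in\{2,\infty\}$ one has $\theta\in\{0,1\}$ and the theorem reduces, respectively, to the hypothesis (together with the remark recalled before the statement that $L^2(G,\om_d)$ is a convolution Banach algebra once $L^2(G,\om_d)\subseteq C^*_r(G)$) and to Young's inequality; so I may assume $2<p<\infty$, hence $1<q<2$ and $0<\theta<1$. (If $G$ is not $\sigma$-finite, one first passes to an open $\sigma$-compact subgroup exactly as in the proof of Proposition~\ref{P:Kunze-Stein Phenomenon-Lp}, so I also assume $G$ is $\sigma$-finite.) Note $d>rd(G)$ guarantees $L^2(G,\om_d)\subseteq C^*_r(G)=\pf^*_2(G)$, i.e. there is $C$ with $\|\lambda_2(f)\|_{B(L^2(G))}\le C\|f\|_{L^2(\om_d)}$ for $f\in C_c(G)$, and hence also $\|f*g\|_{L^2(\om_d)}\le C_d\|f\|_{L^2(\om_d)}\|g\|_{L^2(\om_d)}$ for some $C_d$.

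For the inclusion $L^q(G,\om_{\frac{2d}{p}})\subseteq\pf^*_p(G)$ I apply Theorem~\ref{thm:bilinear-interpol} to the two bounded bilinear restrictions of convolution
$$m_0\colon L^1(G)\times L^\infty(G)\to L^\infty(G)\quad(\text{norm }\le1),\qquad m_1\colon L^2(G,\om_d)\times L^2(G)\to L^2(G)\quad(\text{norm }\le C),$$
which agree on $(L^1(G)\cap L^2(G,\om_d))\times(L^\infty(G)\cap L^2(G))$. Interpolating with $\theta=\tfrac2p$ and using the displayed identities, convolution extends to a bounded bilinear map $L^q(G,\om_{\frac{2d}{p}})\times L^p(G)\to L^p(G)$ of norm at most $C^{2/p}$; specializing the first variable to $f\in C_c(G)$ and using density of $C_c(G)$ in $L^p(G)$ identifies this operator with $\lambda_p(f)$, so $\|\lambda_p(f)\|_{B(L^p(G))}\le C^{2/p}\|f\|_{L^q(\om_{\frac{2d}{p}})}$. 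Since groups with rapid decay are unimodular and $\mc L$ is symmetric, the norm $\|\cdot\|_{L^q(\om_{\frac{2d}{p}})}$ is invariant under $f\mapsto f^*$; combining this with \eqref{Eq:adjoint} in the form $\|\lambda_q(f)\|_{B(L^q(G))}=\|\lambda_p(f^*)\|_{B(L^p(G))}$ gives
$$\|f\|_{\pf^*_p(G)}=\max\{\|\lambda_p(f)\|_{B(L^p(G))},\,\|\lambda_q(f)\|_{B(L^q(G))}\}\le C^{2/p}\|f\|_{L^q(\om_{\frac{2d}{p}})}\qquad(f\in C_c(G)).$$
As $C_c(G)$ is dense in $L^q(G,\om_{\frac{2d}{p}})$ (because $q<\infty$) and in $\pf^*_p(G)$, the identity map on $C_c(G)$ extends to the desired bounded map $L^q(G,\om_{\frac{2d}{p}})\to\pf^*_p(G)$.

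For the bounded multiplication assertion I apply Theorem~\ref{thm:bilinear-interpol} instead to the two bounded bilinear restrictions of convolution
$$n_0\colon L^1(G)\times L^1(G)\to L^1(G)\quad(\text{norm }\le1),\qquad n_1\colon L^2(G,\om_d)\times L^2(G,\om_d)\to L^2(G,\om_d)\quad(\text{norm }\le C_d),$$
$n_1$ being the (renormed) convolution Banach-algebra structure on $L^2(G,\om_d)$. With the same parameter $\theta=\tfrac2p$, all three interpolation spaces become $L^q(G,\om_{\frac{2d}{p}})$ by the first displayed identity, so convolution extends to a bounded bilinear map $L^q(G,\om_{\frac{2d}{p}})\times L^q(G,\om_{\frac{2d}{p}})\to L^q(G,\om_{\frac{2d}{p}})$ (of norm at most $C_d^{2/p}$), which is exactly the claim.

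The argument is largely bookkeeping once the weighted-$L^p$ interpolation identities are in hand; the only step requiring genuine care is pinning down the weight exponent, namely matching the density $(1+\mc L)^{2dq/p}$ produced by complex interpolation against the density $(1+\mc L)^{q d'}$ of $L^q(G,\om_{d'})$ to force $d'=\tfrac{2d}{p}$. I do not expect a serious obstacle beyond this: the only nontrivial input is the rapid decay hypothesis, which enters exactly once, through the boundedness of $m_1$ and $n_1$.
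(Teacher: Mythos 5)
Your argument is correct and is essentially the paper's proof: the same bilinear complex interpolation with parameter $\theta=2/p$ against the rapid-decay endpoint $L^2(G,\om_d)\times L^2(G)\to L^2(G)$, the same weight computation identifying $(L^1(G),L^2(G,\om_d))_\theta$ with $L^q(G,\om_{2d/p})$, and the same final step using unimodularity and the identity $\|\lambda_p(f^*)\|_{B(L^p(G))}=\|\lambda_q(f)\|_{B(L^q(G))}$ to control both halves of the $\pf_p^*$-norm. The only (harmless) differences are that you interpolate against $L^1\times L^\infty\to L^\infty$ to bound $\lambda_p$ directly where the paper interpolates against $L^1\times L^1\to L^1$ to bound $\lambda_q$ first, and that you make explicit, via a second interpolation of $L^1\times L^1\to L^1$ against $L^2(G,\om_d)\times L^2(G,\om_d)\to L^2(G,\om_d)$, the bounded-multiplication assertion that the paper's written proof leaves implicit.
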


\begin{proof}
	We will assume $G$ is $\sigma$-compact. The extension to the case when $G$ is not necessarily $\sigma$-compact follows as in the proof of Proposition \ref{P:Kunze-Stein Phenomenon-Lp}.
	
	Let $\nu$ be the left Haar measure on $G$, and set $w_0=1$ and $w_1=\om_{2d}$.
	Observe
	$$\frac{1}{q}=\frac{1-\theta}{1}+\frac{\theta}{2}\Longrightarrow \theta=\frac{2}{p}$$
	and, for $\theta$ as above,
	$$w_0^{\frac{q(1-\theta)}{1}}w_1^{\frac{q\theta}{2}}=\om_{2d}^{\frac{q\theta}{2}}
	=\om_{\frac{2d q}{p}}.$$
	Therefore it follows from \eqref{Eq:weigh lp-notation}, \eqref{Eq:interpolation-weigh lp} and \eqref{Eq:interpolation-weigh lp-relations} that we have the following complex interpolation pair
	\begin{equation}\label{Eq:interpolation-weigh lp-polynomial weight}
	(L^1(G),L^2(G,\om_d))_\theta=L^q(G,\om_{\frac{2d}{p}}) \ \ \text{with} \ \ \theta:=2/p.
	\end{equation}
%
	Moreover, since convolution defines bounded bilinear maps from $L^1(G)\times L^1(G)$ into $L^1(G)$ and also from $L^2(G,\om_d)\times L^2(G)$ into $L^2(G)$, it follows from Theorem \ref{thm:bilinear-interpol} that it also defines a bounded bilinear map from $L^q(G,\om_{\frac{2d}{p}})\times L^q(G)$ into $L^q(G)$. In other words, the mapping
	\begin{equation*}
	L^q(G,\om_{\frac{2d}{p}}) \to B(L^q(G)) \ \ , \ \ f\mapsto \lambda_q(f)
	\end{equation*}
	is a well-defined bounded linear map. On the other hand, groups with rapid decay are unimodular so that the standard involution on $L^1(G)$ also extends to an isometric involution on $L^q(G,\om_{\frac{2d}{p}})$. Hence, for every $f\in L^q(G,\om_{\frac{2d}{p}})$
	\begin{eqnarray*}
		\|\lambda_p(f)\|_{B(L^p(G))} &=& \|\lambda_q(f^*)\|_{B(L^q(G))}  \ \ (\text{by}\ \eqref{Eq:adjoint})\\
		&\leq& C \|f^*\|_{L^q(G,\om_{\frac{2d}{p}})} \\
		&=& C\|f\|_{L^q(G,\om_{\frac{2d}{p}})}
	\end{eqnarray*}
	where $C$ is the norm of the bounded linear map $L^q(G,\om_{\frac{2d}{p}}) \to B(L^q(G))$.
	This implies $$\|f\|_{\pf_p^*(G)}\leq C\|f\|_{L^q(G,\om_{\frac{2d}{p}})}$$ and, so, the proof is complete.
\end{proof}

In the following critical theorem, among other things, we characterize positive linear functionals on $C^*_{L^p}(G)$ when $G$ possess both the integrable Haagerup property and the rapid decay property.

\begin{thm}\label{thm:RD}
	Let $2\leq p<\infty$. Suppose $(G,\mc L)$ has the rapid decay property, $\mc L$ is conditionally negative definite, and $\phi_{t,\mc L}\in\bigcup_{1\leq q<\infty}L^q(G)$ for every $t\in (0,\infty)$. The following are equivalent for a positive definite function $\phi$ on $G$.
	\begin{enumerate}[$(i)$]
		\item $\phi$ extends to a continuous linear functional on $C^*_{L^p}(G)$;
		\item $\phi$ extends to a continuous linear functional on $C^*(\pf_p^*(G))$;
		\item $\phi\in L^p(G,\omega^{-1}_d)$ for every $d> \frac{2}{p}\,rd(G)$;
		\item $\phi\phi_{t,\mc L}\in L^p(G)$ for every $t\in (0,\infty)$.
	\end{enumerate}
\end{thm}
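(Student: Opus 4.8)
The plan is to prove a cycle of implications, with the equivalence $(i)\Leftrightarrow(ii)$ coming essentially for free from the machinery already developed, the implication $(ii)\Rightarrow(iii)$ coming from Theorem~\ref{T:RD-weighted Lq embeds in convolution operators} by duality, and the remaining implications $(iii)\Leftrightarrow(iv)$ and $(iv)\Rightarrow(i)$ being the genuinely new arguments that exploit the interplay between the rapid decay hypothesis and the integrability of $\phi_{t,\mc L}$.

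\textbf{Step 1: $(i)\Leftrightarrow(ii)$.} I would first observe that since $\mc L$ is conditionally negative definite with $\phi_{t,\mc L}\in\bigcup_{1\le p<\infty}L^p(G)$, the group $G$ has the integrable Haagerup property, so Proposition~\ref{P:Int Hag Prop-cts Cp-alg norm} gives $\CLp=C^*_{L^{p+}}(G)$. Combined with Lemma~\ref{lem:PF-to-CLp} (which gives the surjection $\CPF\to\CLp$) and Proposition~\ref{prop:continuous}, the argument is then formally identical to the proof of $(i)\Leftrightarrow(ii)$ in Theorem~\ref{thm:KS}: for $f\in L^1(G)$ one has $\|f\|_{\CLp}=\lim_{p'\to p^+}\|f\|_{\CLp[']}\le\lim_{p'\to p^+}\|f\|_{\pf_{p'}^*(G)}=\|f\|_{\pf_p^*(G)}$, so a positive functional on $\CPF$ restricts to one on $\CLp$; the reverse containment is the quotient map from Lemma~\ref{lem:PF-to-CLp}. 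Since $\phi$ is positive definite, extending to a continuous linear functional on either C*-algebra is the same as extending to a positive linear functional, so the two conditions agree on positive definite $\phi$.

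\textbf{Step 2: $(ii)\Rightarrow(iii)$.} Fix $d>\frac{2}{p}\,rd(G)$ and write $d=\frac{2d'}{p}$ with $d'>rd(G)$. Let $q$ be the conjugate exponent of $p$. By Theorem~\ref{T:RD-weighted Lq embeds in convolution operators}, $L^q(G,\om_{\frac{2d'}{p}})=L^q(G,\om_d)$ embeds continuously into $\pf_p^*(G)$, and (since $C_c(G)$ is dense in the weighted space) densely, so the C*-norm $\|\cdot\|_{\CPF}$ is dominated on $C_c(G)$ by a multiple of $\|\cdot\|_{L^q(G,\om_d)}$. Hence a continuous linear functional on $\CPF$ — in particular the one given by $\phi$ via $f\mapsto\int_G f(s)\phi(s)\,ds$ — is continuous for the $L^q(G,\om_d)$-norm on $C_c(G)$, i.e.\ extends to an element of the dual $L^q(G,\om_d)^*$. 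Unwinding the weighted duality (here I use that $G$ is unimodular, which holds since it has rapid decay), this dual is $L^p(G,\om_d^{-1})$ in the notation $L^p(G,\om_{-d})$; concretely $\phi\om_d^{-1}\in L^p(G)$, which is exactly $(iii)$. Since $d>\frac{2}{p}rd(G)$ was arbitrary, $(iii)$ holds.

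\textbf{Step 3: $(iii)\Leftrightarrow(iv)$ and $(iv)\Rightarrow(i)$.} For $(iii)\Rightarrow(iv)$: the point is that on the region where $\mc L$ is large, the polynomial weight $\om_d^{-1}=(1+\mc L)^{-d}$ and the exponential $\phi_{t,\mc L}=e^{-t\mc L}$ control each other up to integrable error — more precisely, $\phi_{t,\mc L}(s)\le C_{t,d}\,\om_d(s)^{-d}\cdot(\text{bounded})$ fails directly but $\phi\phi_{t,\mc L}=\phi\om_d^{-1}\cdot\om_d\phi_{t,\mc L}$, and $\om_d\phi_{t,\mc L}=(1+\mc L)^d e^{-t\mc L}$ is a bounded function of $s$. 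Hence $\phi\phi_{t,\mc L}\in L^p(G)$ follows from $\phi\om_d^{-1}\in L^p(G)$ by H\"older with the bounded factor, giving $(iv)$; conversely, given $(iv)$, since $\phi_{t,\mc L}\in\bigcup_{1\le r<\infty}L^r(G)$ and is bounded, one recovers $\phi\om_d^{-1}\in L^p(G)$ for $d$ large (again comparing the polynomial decay of $\om_d^{-1}$ against the tail of $\phi_{t,\mc L}$, absorbing the difference into an $L^r$ factor via H\"older). Finally, $(iv)\Rightarrow(i)$: the pointwise product $\phi\phi_{t,\mc L}$ is again positive definite (product of positive definite functions), lies in $L^p(G)$, hence defines a positive functional on $\CLp$ by \cite{BG}; letting $t\to0^+$, the functions $\phi\phi_{t,\mc L}$ converge to $\phi$ uniformly on compact subsets of $G$, and since $B_{L^p}(G)$ is weak*-closed in $B(G)$ (being $\CLp^*$) and these limits of positive definite functions in $B_{L^p}(G)$ stay in $B_{L^p}(G)$, we get $\phi\in B_{L^p}(G)$, i.e.\ $(i)$.

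\textbf{Main obstacle.} I expect the genuine work to be Step~3, specifically the precise comparison between the polynomial weight $\om_d$ and the exponential factor $\phi_{t,\mc L}$: one must be careful that $(1+\mc L)^d e^{-t\mc L}$ is bounded (which is clear since $x^d e^{-tx}$ is bounded on $[0,\infty)$) but also that in the converse direction the $L^r$-integrability of some power of $\phi_{t,\mc L}$ can be traded, via H\"older with exponents depending on $d$ and $r$, to upgrade $\phi\phi_{t,\mc L}\in L^p$ to $\phi\om_{-d}\in L^p$ for all sufficiently large $d$ — the bookkeeping of which exponents close up is the only nonroutine point. Everything else is an assembly of Proposition~\ref{P:Int Hag Prop-cts Cp-alg norm}, Lemma~\ref{lem:PF-to-CLp}, Theorem~\ref{T:RD-weighted Lq embeds in convolution operators}, and the standard fact that multiplication by a normalized positive definite function sends $B_{L^{p}}(G)$-type spaces into themselves and is a weak*-to-pointwise approximate identity.
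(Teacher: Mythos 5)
Your proposal is correct and follows the paper's own route: the cycle $(i)\Rightarrow(ii)$ from Lemma \ref{lem:PF-to-CLp}, $(ii)\Rightarrow(iii)$ from Theorem \ref{T:RD-weighted Lq embeds in convolution operators} together with weighted $L^q$--$L^p$ duality, $(iii)\Rightarrow(iv)$ from boundedness of $\om_d\phi_{t,\mc L}$, and $(iv)\Rightarrow(i)$ from positive definiteness of $\phi\phi_{t,\mc L}$, the fact that positive definite functions in $L^p(G)$ lie in $B_{L^p}(G)$, and weak*-closedness of $B_{L^p}(G)$ applied to the limit $\phi\phi_{t,\mc L}\to\phi$ uniformly on compacta. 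Two side remarks: the sentence in Step 1 asserting that a positive functional on $C^*(\pf_p^*(G))$ restricts to one on $C^*_{L^p}(G)$ has the quotient direction reversed (the norm inequality $\|f\|_{C^*_{L^p}(G)}\leq\|f\|_{\pf_p^*(G)}$ only yields $(i)\Rightarrow(ii)$), and the direct H\"older argument you single out as the main obstacle, namely $(iv)\Rightarrow(iii)$, cannot be made to work, since $e^{-t\mc L}$ decays faster than $\om_d^{-1}$ and so $(iv)$ is a pointwise weaker integrability condition than $(iii)$; both claims are harmless, however, because they are redundant --- your cycle $(i)\Rightarrow(ii)\Rightarrow(iii)\Rightarrow(iv)\Rightarrow(i)$ already gives the full equivalence, which is exactly how the paper argues.
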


\begin{proof}
	$(i) \Longrightarrow (ii)$: This is immediate from Lemma \ref{lem:PF-to-CLp}.
	
	$(ii) \Longrightarrow (iii)$: This follows by Theorem \ref{T:RD-weighted Lq embeds in convolution operators} since $L_q(G,\omega_d)^*=L_p(G,\omega_d^{-1})$.
	
	$(iii) \Longrightarrow (iv)$: This is immediate since $\phi_{t,\mc L}$ decays faster that $\frac{1}{\om_d}$ for every $d>0$.
	
	$(iv) \Longrightarrow (i)$: Our assumption implies that for every $t>0$, $\phi\phi_{t,\mc L}$ extends to a positive linear functional on $C^*_{L^p}(G)$. The result is now clear since  $\phi\phi_{t,\mc L}\to \phi$ as $t\to 0^+$ uniformly on compact subsets of $G$.
\end{proof}

The above theorem immediately generalizes Okayasu's characterization of positive definite functions of $\bb F_d$ extending to positive linear functionals on $C^*_{\ell^p} (\bb F_d)$ (see \cite{Okay}). We deduce the following as an immediate consequence.

\begin{cor}\label{cor:RD-distinct}
	If $G$ is a nonamenable locally compact group satisfying the conditions of Theorem \ref{thm:RD}, then the canonical quotient
	$$ C^*_{L^p}(G)\to C^*_{L^{p'}}(G)$$
	is not injective for $2\leq p'<p\leq \infty$.
\end{cor}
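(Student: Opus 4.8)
The plan is to derive Corollary \ref{cor:RD-distinct} from Theorem \ref{thm:RD} in essentially the same way that Corollary \ref{cor:KS-distinct} is deduced from Theorem \ref{thm:KS}, but now using the rapid decay characterization in condition $(iv)$ of Theorem \ref{thm:RD} rather than the Kunze-Stein one. First I would dispose of the endpoint case $p=\infty$: by Corollary \ref{C:gen properties-Full c-alg of symm conv} parts $(c)$ and $(d)$, $C^*_{L^\infty}(G)=C^*(G)\neq C^*(\pf_{p'}^*(G))$ canonically for any $p'<\infty$ since $G$ is nonamenable, and combining with Lemma \ref{lem:PF-to-CLp} this gives that $C^*_{L^\infty}(G)\to C^*_{L^{p'}}(G)$ is not injective; alternatively one can note $\mc L$ itself (or rather the associated $\phi_{t,\mc L}$) is not in $B_r(G)$ and run the argument below. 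So from now on assume $2\leq p'<p<\infty$.

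The key step is to exhibit a positive definite function $\phi$ on $G$ that extends to a positive linear functional on $C^*_{L^p}(G)$ but not on $C^*_{L^{p'}}(G)$; by the duality between C*-completions of $L^1(G)$ and weak*-closed subspaces of $B(G)$ (Section \ref{sec:FS}), the existence of such a $\phi$ forces the canonical quotient $C^*_{L^p}(G)\to C^*_{L^{p'}}(G)$ to be non-injective. The natural candidates are the functions $\phi_{t,\mc L}(s)=e^{-t\mc L(s)}$, which are positive definite because $\mc L$ is conditionally negative definite (Schoenberg). Using the hypothesis $\phi_{t_0,\mc L}\in\bigcup_{1\leq r<\infty}L^r(G)$ together with nonamenability, I would set $r_0:=\inf\{r : \phi_{t_0,\mc L}\in L^r(G)\}$ for a fixed $t_0$; nonamenability of $G$ guarantees $\phi_{t_0,\mc L}\notin B_r(G)$, and since $\phi_{t_0,\mc L}\in L^r(G)$ implies $\phi_{t_0,\mc L}\in B_{L^r}(G)$, this yields $r_0>2$. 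Then, exactly as in the proof of Corollary \ref{cor:KS-distinct}, the scaling identity $\phi_{t,\mc L}=(\phi_{t_0,\mc L})^{t/t_0}$ shows that $\phi_{t,\mc L}\phi_{s,\mc L}=\phi_{t+s,\mc L}\in L^{r}(G)$ precisely when the exponent $(t+s)r_0/t_0$ is at least $r$; working this through, condition $(iv)$ of Theorem \ref{thm:RD} is satisfied for the parameter $p$ exactly when $t\geq r_0 t_0/p$, and for a $t$ with $r_0t_0/p\leq t< r_0 t_0/p'$ the function $\phi_{t,\mc L}$ extends to a positive linear functional on $C^*_{L^p}(G)$ but fails the criterion (and hence, by the equivalence $(i)\Leftrightarrow(iv)$ of Theorem \ref{thm:RD} applied with $p'$) does not extend to one on $C^*_{L^{p'}}(G)$. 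Therefore $B_{L^{p'}}(G)\subsetneq B_{L^p}(G)$ and the quotient map is not injective.

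The main obstacle — and it is a mild one — is checking that condition $(iv)$ of Theorem \ref{thm:RD} is the ``right'' separating condition, i.e. that the cutoff value of $t$ for membership in the $p$-version strictly exceeds the cutoff for the $p'$-version; this is just the elementary observation that $r_0 t_0/p < r_0 t_0/p'$ when $p'<p$, combined with the boundedness of $\phi_{t,\mc L}$ which lets one pass freely between $L^p$ and $L^{p+\epsilon}$ for the relevant exponents. One should also make sure the hypotheses of Theorem \ref{thm:RD} are genuinely available for both $p$ and $p'$: the rapid decay property and conditional negative definiteness of $\mc L$ are intrinsic to $(G,\mc L)$ and independent of the exponent, and the integrability of $\phi_{t,\mc L}$ is likewise exponent-free, so Theorem \ref{thm:RD} applies verbatim with $p$ replaced by $p'$. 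With these checks in place the corollary follows immediately. I would not belabor any computation beyond recording the inequality $r_0 t_0/p < r_0 t_0/p'$ and invoking Theorem \ref{thm:RD}.
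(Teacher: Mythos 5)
Your proposal is correct and follows essentially the route the paper intends: the paper omits the proof, noting it is ``similar to that of Corollary \ref{cor:KS-distinct}'', and your argument is exactly that adaptation, handling $p=\infty$ via Corollary \ref{C:gen properties-Full c-alg of symm conv} and separating $B_{L^{p'}}(G)$ from $B_{L^p}(G)$ with the functions $\phi_{t,\mc L}$ through the equivalence $(i)\Leftrightarrow(iv)$ of Theorem \ref{thm:RD}. Two harmless wording slips: the membership criterion should read $\phi_{t+s,\mc L}\in L^r(G)$ precisely when $r(t+s)/t_0$ is (essentially) at least $r_0$, not when $(t+s)r_0/t_0\geq r$, and the cutoff $r_0t_0/p$ for the $p$-version is strictly \emph{smaller} than the cutoff $r_0t_0/p'$ rather than exceeding it --- but the inequality you actually record and the choice of $t\in[r_0t_0/p,\,r_0t_0/p')$ are the correct ones, so the argument stands.
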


The proof of the preceding Corollary is similar to that of Corollary \ref{cor:KS-distinct} and is omitted.

\begin{example}
	If $G$ is either a noncommutative free group on finitely many generators or is an finitely generated infinite Coxeter group, then $G$ has the rapid decay property (see \cite{Haag} and \cite{CR}) and the standard length function is conditionally negative definite. In particular, the positive definite functions on $C^*_{L^p}(G)$ are characterized by Theorem \ref{thm:RD} and the canonical quotient map
	$$ C^*_{L^p}(G)\to C^*_{L^{p'}}(G)$$
	is not injective for $2\leq p'<p\leq \infty$.
\end{example}

\begin{rem}
	Let $G$ be a locally compact group acting {by isometries on a metric space $(X,d)$}. Further, suppose $G$ has the rapid decay property and there exists $x_0\in X$ so that $\mc L:G\to [0,\infty)$ defined by
$$\mc L(s)=d(sx_0,x_0) \ \ (s\in G)$$ is conditionally negative definite on $G$ and
{ $\mc L$ is equivalent to some word length function $\mc L_V$ associated to a compact, symmetric generating neighbourhood $V$ of $e\in G$. Then it follows from the discussion in Section \ref{S:Groups with RD} and Lemma \ref{lem:integrable} that $(G,\mc L)$ has the rapid decay property and $\phi_{t,\mc L}\in \bigcup_{1\leq p< \infty} L^p(G)$ for every $t\in (0,\infty)$. Hence $(G,\mc L)$ satisfies the hypothesis of Theorem \ref{thm:RD} and, if $G$ is also nonamenable, it also satisfies the hypothesis of Corollary \ref{cor:RD-distinct}. }
\end{rem}

%
%
%
%
%
%
%
%
%
%

{
	
\begin{example}
	Suppose $G$ is a discrete group equipped with a metrically proper, cellular action on a CAT(0) cube complex whose 1-skeleton is uniformly locally finite. Further assume that the stabilizers of the action are uniformly bounded. Then $G$ has the rapid decay property by \cite[Theorem 0.4]{CR}. Thus Theorem \ref{thm:RD} applies to $G$ since the assumptions of Proposition \ref{P:CAT(0)-Integ Haagerup prop} are also satisfied. In particular, the canonical quotient map
	$$ C^*_{L^p}(G)\to C^*_{L^{p'}}(G)$$
	is not injective for $2\leq p'<p\leq \infty$ when $G$ is nonamenable.
	
\end{example}
	
}

\subsection*{Acknowledgements}
The authors are grateful for the helpful comments of anonymous referees that led to shortened arguments in $\S4$ and the correction of several minor errors.

\end{document}